\DeclarePairedDelimiter\floor{\lfloor}{\rfloor}
\DeclareMathOperator{\sgn}{sgn}
\newcommand{\R}{\mathbb{R}}
\newcommand{\N}{\mathbb{N}}
\newcommand{\C}{\mathbb{C}}
\newcommand{\Z}{\mathbb{Z}}
\newcommand{\D}{\mathbb{D}}
\newcommand{\T}{\mathbb{T}}
\newcommand{\E}{\mathbb{E}}
\renewcommand{\d}{\mathrm{d}}
\renewcommand{\i}{\mathrm{i}}
\renewcommand{\Re}{\operatorname{Re}}
\renewcommand{\Im}{\operatorname{Im}}
\newcommand{\dd}{\mathbf{d}}
\newcommand{\g}{\mathbf{g}}
\newcommand{\h}{\mathbf{h}}
\newtheorem{thm}{Theorem}[section]
\newtheorem{prop}[thm]{Proposition}
\newtheorem{lemma}[thm]{Lemma}
\newtheorem{cor}[thm]{Corollary}
\theoremstyle{definition}
\theoremstyle{remark}
\newtheorem{remark}[thm]{Remark}
\begin{document}

\author[K. Courteaut]{K. Courteaut\textsuperscript{*}}
\author[K. Johansson]{K. Johansson\textsuperscript{$\dagger$}}
\thanks{\textsuperscript{*}{Department of Mathematics, KTH Royal Institute of Technology, kc5733@nyu.edu}}
\thanks{\textsuperscript{$\dagger$}Department of Mathematics, KTH Royal Institute of Technology, kurtj@kth.se}
\thanks{Supported by the grant KAW 2015.0270 from the Knut and Alice Wallenberg
Foundation}
\title{Partition function for the 2d Coulomb gas on a Jordan curve}

\maketitle
\begin{abstract}
We prove an asymptotic formula for the partition function of a 2d Coulomb gas at inverse temperature $\beta>0$, confined to lie on a Jordan curve. The partition function can include a linear statistic. The asymptotic formula involves a Fredholm determinant related to the Loewner energy of the curve, and also an expression involving the 
sampling function, the exterior conformal map for the curve and the Grunsky operator. The asymptotic formula also gives a central limit theorem for linear statistics of the particles in the gas. 
\end{abstract}


\section{Introduction}
\label{introduction}

\setcounter{equation}{0}
\setcounter{thm}{0}
Consider a planar Coulomb gas restricted to lie on a Jordan curve $\gamma$ in the complex plane. More precisely, let $z_1,\cdots, z_n$ denote the positions of $n$ particles on $\gamma$ with joint density given by
\begin{equation}\label{probbasic}
\d\mu_n^\beta(z_1,\cdots, z_n) = \frac{1}{Z_n^\beta} \prod_{1\leq \mu<\nu \leq n} \lvert z_{\mu}-z_{\nu}\rvert^\beta \prod_{\mu=1}^n|\d z_\mu|.
\end{equation}
Here $Z_n^\beta$ is a normalization constant, the \textit{partition function}, and $\beta>0$ can be thought of as the inverse temperature of the particle system. In the case where $\gamma$ is the unit circle $\T$ we recover the well-studied Circular $\beta$-Ensemble. In this case $z_1,\cdots, z_n$ are the eigenvalues of a random matrix, and in particular we get classical random matrix ensembles COE ($\beta=1$), CUE ($\beta=2$), and CSE ($\beta=4$), see e.g.~ \cite{For}. The case $\beta=2$ for general curves $\gamma$ is also special as it gives rise to a determinantal point process on $\gamma$. Let $g$ be a function defined on $\gamma$. By Andreief's identity,
\[\frac{1}{n!}\int_{\gamma^n} \prod_{1\leq \mu<\nu \leq n} \lvert z_{\mu}-z_{\nu}\rvert^2 \prod_{\mu=1}^n e^{g(z_\mu)} |\d z_\mu|  = \det \Big( \int_{\gamma} z^j\bar{z}^k e^{g(z)} |\d z| \Big)_{j,k=1}^n.  \]
The right-hand side is a generalized Toeplitz determinant, which for real $g$ is related to polynomials on the curve $\gamma$ which are orthogonal w.r.t.\ the weight $e^g$, see Section 16.2 in \cite{Szego}.

In this paper we obtain an asymptotic formula for
\begin{equation}\label{eq:maindef}
D_n^\beta[e^g] \coloneqq \frac{1}{n!} \int_{\gamma^n} \prod_{1\leq \mu<\nu \leq n} \lvert z_{\mu}-z_{\nu}\rvert^\beta \prod_{\mu=1}^n e^{g(z_{\mu})} \lvert d z_{\mu}\rvert, 
\end{equation}
as the number of particles $n$ goes to infinity. In particular, we obtain the asymptotics of the partition function $Z_{n,\beta}(\gamma)=D_n^\beta[1]$.
The case $\beta = 2$ was studied in \cite{Joh88} and \cite{Joh22}, and the latter gave 
an asymptotic formula for the partition function. An asymptotic formula for \eqref{eq:maindef} was conjectured in \cite{Joh22}, and in this paper we prove this conjecture under rather strong regularity assumptions on the curve $\gamma$ and the function $g$. We first generalize to any $\beta>0$ the asymptotic formula for $D_n^\beta[e^{g}]/D_n^\beta[1]$ proved in \cite{Joh88} in the case $\beta=2$, and then use this result to obtain the asymptotics of $D_n^\beta[e^g]$. The asymptotic formula for $D_n^\beta[e^g]$ was predicted also in \cite{WieZab22} via a non-rigorous argument.

The function $z\mapsto D_n^\beta[e^{zg}]/D_n^\beta[1]$ is the Laplace transform of the linear statistic $\sum_\mu g(z_\mu)$. Its limit therefore provides the limiting distribution (and moments) of $\sum_{\mu=1}^n g(z_\mu)$. It is typical that for eigenvalues of random matrices and one-dimensional Coulomb gases, such linear statistics do not need to be normalized by $\sqrt{n}$ in order to converge, unlike in the classical central limit theorem. We will see that this is the case here as well, and that the limit is normal with a mean and variance depending on $g$ and the exterior conformal mapping related to $\gamma$.

In order to state our main results we introduce the following notation. Denote by $\Omega_+$ the unbounded component of the complement of $\gamma$, and by $\Omega_-$ the bounded component. Let $\D$ denote the open unit disc, and $\D^*=\{z\,|\,|z|>1\}$ the exterior of the unit circle. Let $\phi$ be the unique conformal map from $\D^*$ onto $\Omega_+$ such that $\lim_{z\to\infty} \phi(z) = \infty$ and $\lim_{z\to \infty} \phi(z)/z>0$. Then $\lim_{z\to\infty} \phi(z)/z = \mathrm{cap}(\gamma)$, the \textit{capacity} of the curve. By Carath\'{e}odory's theorem, $\phi$ has a continuous one-to-one extension to $\D^c$, so $\phi(e^{\i t})$, $t\in[0,2\pi)$, is a parameterization of $\gamma$.
   
Because $\phi$ is conformal we can write
\begin{equation}\label{Grunsky}
     \log \Big( \frac {\phi(z)-\phi(w)}{z-w} \Big) =  \log (\mathrm{cap}(\gamma))-\sum_{k,l \geq 1} a_{kl} z^{-k} w^{-l}
\end{equation}
if $|z|>1$, $|w|>1$, see \cite{Pom}. The coefficients $\{a_{kl}\}$ are called the \textit{Grunsky coefficients}. Note that $a_{kl}=a_{lk}$. Taking the limit $w\to z$ gives
\begin{equation}\label{Grunsky2}
    \log \phi'(z) = -\sum_{k\geq2} \Big(\sum_{j=1}^{k-1} a_{j,k-j} \Big) z^{-k}, \quad |z|> 1.
\end{equation}
The \textit{Grunsky operator} is the operator on $\ell_2(\C)$ given by 
\[B = \big( \sqrt{kl}a_{kl} \big)_{k,l\geq 1}.\]
We let $B^{(1)}$ and $B^{(2)}$ denote its real and imaginary parts and define the operator $K$ on $\ell_2(\R) \oplus \ell_2(\R)$ by
\begin{equation}\label{eq:K}
    K = \begin{pmatrix}
B^{(1)} & B^{(2)} \\
B^{(2)} & -B^{(1)}
\end{pmatrix}.
\end{equation}
Write
\[
\begin{pmatrix}
    \mathbf{x_\theta} \\ \mathbf{y_\theta}
\end{pmatrix} = \begin{pmatrix}
    (\frac{1}{\sqrt{k}} \cos(k\theta))_{k\geq1} \\ (\frac{1}{\sqrt{k}} \sin(k\theta))_{k\geq1}
\end{pmatrix}.
\]
Then, by \eqref{Grunsky},
\begin{equation}\label{KGrunskyrel}
-\log \Big| \frac{ \phi(e^{\i\theta})-\phi(e^{\i \omega})}{e^{\i\theta}-e^{\i \omega}} \Big| +\log (\mathrm{cap}(\gamma))=\sum_{k,l\geq 1} \Re (a_{kl}) \cos(k \theta+ l \omega)+\Im (a_{kl})\sin(k \theta+ l \omega) = \begin{pmatrix}
    \mathbf{x_\omega} \\ \mathbf{y_\omega}
\end{pmatrix}^t K \begin{pmatrix}
    \mathbf{x_\theta} \\ \mathbf{y_\theta}
\end{pmatrix}, 
\end{equation}
so $K$ appears when we take the real part in \eqref{Grunsky}. The operator $K$ is also related to the Neumann-Poincar\'e operator, see Proposition \ref{prop:NP}.

Given a complex-valued function $g$ on $\gamma$ we can move it to the unit circle using the map $\phi$ and expand it in a Fourier series
\[
g\circ\phi(e^{\i\theta}) = \frac{a_0}{2} +\sum_{k\geq 1} (a_k\cos(k\theta)+ b_k\sin(k\theta)),
\]
where $a_n$ and $b_n$ can be complex-valued.
Write
\begin{equation}\label{gvector}
 \g = \frac{1}{2} \begin{pmatrix}
    (\sqrt{k}a_k)_{k\geq 1} \\ (\sqrt{k}b_k)_{k\geq 1}
\end{pmatrix},
\end{equation}
which is an element in $\ell_2(\C) \oplus \ell_2(\C)$ if and only if $g\circ \phi$ belongs to the Sobolev space $H^{1/2}(\T)$. Then we can write the Fourier series as
\begin{equation}\label{ggvec}
g\circ\phi(e^{\i\theta})=\frac{a_0}2+2\begin{pmatrix}
    \mathbf{x_\theta} \\ \mathbf{y_\theta}
\end{pmatrix}^t\g.
\end{equation}
We also set
\begin{equation}\label{eq:d}
    \dd = \frac{1}{2}\begin{pmatrix}
( \sqrt{k} \sum_{j=1}^{k-1} \Re a_{j,k-j} )_{k\geq1} \\
( \sqrt{k} \sum_{j=1}^{k-1} \Im a_{j,k-j} )_{k\geq1}
\end{pmatrix},
\end{equation}
so that by \eqref{Grunsky2}
\begin{equation}\label{logphiprime}
\log|\phi'(e^{\i\theta})|=-2\begin{pmatrix}
    \mathbf{x_\theta} \\ \mathbf{y_\theta}
\end{pmatrix}^t\dd.
\end{equation}
Furthermore, we let
\begin{equation}\label{gbetavec}
\g_\beta=\g+(\frac{\beta}{2}-1)\dd.
\end{equation}
Finally, in the case when $\gamma$ is the unit circle $\T$ the partition function can be computed explicitely,
\begin{equation}\label{Selberg}
Z_{n,\beta}(\T) = \frac{(2\pi)^n}{n!}\frac{\Gamma(1+\beta n/2)}{\Gamma(1+\beta/2)^n}.
\end{equation}
This follows from Selberg's integral, see \cite{Mehta}[Theorem 12.1.1.].
We prove the following.

\begin{thm}\label{mainthm2}
Assume that $\gamma$ is a $C^{12+\alpha}$ Jordan curve and $g\in C^{4+\epsilon}(\gamma)$, for some $\alpha>0$, $\epsilon>0$. Then, as $n\to\infty$,
\begin{equation}\label{eq:asym}
   D_n^\beta[e^g]  = \frac{Z_{n,\beta}(\T)\mathrm{cap}(\gamma)^{\beta n^2/2+(1-\beta/2)n}}{\sqrt{\det(I+K)}}\exp \Big(n\frac{a_0}{2}+\frac{2}{\beta}  \mathbf{g}_\beta^t (I+K)^{-1}\mathbf{g}_\beta +o(1) \Big).
\end{equation}
\end{thm}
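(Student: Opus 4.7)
The plan is to pull the integral \eqref{eq:maindef} back to the unit circle via $z_\mu=\phi(e^{\i\theta_\mu})$ and express it as $Z_{n,\beta}(\T)$ times an expectation, under the circular $\beta$-ensemble on $\T^n$, of an exponential of a quadratic-plus-linear functional of the empirical Fourier vector $\mathbf V := \sum_\mu \mathbf v_{\theta_\mu}$, where $\mathbf v_\theta := \begin{pmatrix}\mathbf{x_\theta}\\\mathbf{y_\theta}\end{pmatrix}$. Combining \eqref{KGrunskyrel}, its $\omega\to\theta$ diagonal limit $\mathbf v_\theta^t K\mathbf v_\theta = \log\mathrm{cap}(\gamma)-\log|\phi'(e^{\i\theta})|$, the Jacobian identity \eqref{logphiprime}, the expansion \eqref{ggvec}, the definition \eqref{gbetavec} of $\g_\beta$, and the algebraic identity $2\sum_{\mu<\nu}\mathbf v_{\theta_\mu}^t K\mathbf v_{\theta_\nu} = \mathbf V^t K\mathbf V - \sum_\mu \mathbf v_{\theta_\mu}^t K\mathbf v_{\theta_\mu}$, all the linear functionals of $\mathbf V$ collapse into $2\mathbf V^t\g_\beta$, and one arrives at the exact identity
\[
D_n^\beta[e^g]=\mathrm{cap}(\gamma)^{\beta n^2/2+(1-\beta/2)n}\,e^{na_0/2}\,Z_{n,\beta}(\T)\,\mathbb E_{n,\beta}\!\Big[\exp\!\Big(2\mathbf V^t\g_\beta-\tfrac{\beta}{2}\mathbf V^tK\mathbf V\Big)\Big],
\]
where $\mathbb E_{n,\beta}$ denotes expectation under the circular $\beta$-ensemble. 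The capacity power works out correctly because $\sum_\mu\log|\phi'(e^{\i\theta_\mu})|$ contributes an additional $n\log\mathrm{cap}(\gamma)$ beyond $\beta n(n-1)/2+\beta n/2$. The prefactor is now explicit, so the task is to identify the limit of the single remaining expectation.

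Its limit should be $(\det(I+K))^{-1/2}\exp\!\big(\tfrac{2}{\beta}\g_\beta^t(I+K)^{-1}\g_\beta\big)$. Heuristically this is because the CLT for linear statistics of the circular $\beta$-ensemble implies $\mathbf V\Rightarrow\mathcal N(0,\beta^{-1}I)$ jointly on $l_2(\N)\oplus l_2(\N)$ — each Fourier-mode component $\frac{1}{\sqrt k}\sum_\mu\cos(k\theta_\mu)$ and $\frac{1}{\sqrt k}\sum_\mu\sin(k\theta_\mu)$ has limit variance $1/\beta$ and the modes are asymptotically independent — so formally integrating $\exp\!\big(2v^t\g_\beta-\tfrac{\beta}{2}v^t(I+K)v\big)$ against this law yields exactly the claimed answer by completing the square. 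To make this rigorous, following the two-step plan advertised in the introduction, I would first extend \cite{Joh88} from $\beta=2$ to arbitrary $\beta>0$ to obtain the asymptotics of the ratio $D_n^\beta[e^g]/D_n^\beta[1]$: this is a quantitative CLT for the linear statistic $2\mathbf V^t\g_\beta$ under the measure on $\T^n$ obtained by tilting the circular $\beta$-ensemble by $\exp(-\tfrac{\beta}{2}\mathbf V^tK\mathbf V)$, and it produces the Gaussian shift factor $\exp\!\big(\tfrac{2}{\beta}\g_\beta^t(I+K)^{-1}\g_\beta\big)$. Second, applying the same pullback with $g=0$ expresses $D_n^\beta[1]$ itself through $\mathbb E_{n,\beta}[\exp(2(\beta/2-1)\mathbf V^t\dd-\tfrac{\beta}{2}\mathbf V^tK\mathbf V)]$; combined with the ratio formula, the explicit Selberg computation \eqref{Selberg}, and the $\beta=2$ partition function of \cite{Joh22} as an anchor, this isolates the $(\det(I+K))^{-1/2}$ prefactor.

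The hardest step will be the rigorous evaluation of the exponential expectation of the \emph{quadratic} functional $-\tfrac{\beta}{2}\mathbf V^tK\mathbf V$, not just of a smooth linear statistic. A mode-by-mode CLT is insufficient; one needs joint control of all Fourier modes with uniform exponential-moment bounds strong enough to justify formal Gaussian integration in infinite dimensions. The natural approach is a Hilbert-Schmidt truncation $K\to K_N$ with $N=N(n)\to\infty$ slowly enough in $n$: the hypothesis $\gamma\in C^{12+\alpha}$ forces sufficient decay of Grunsky coefficients so that $K_N\to K$ in Hilbert-Schmidt norm and the truncation error is $o(1)$, while $g\in C^{4+\epsilon}$ gives the analogous decay of $\g_\beta$ and finiteness of $\g_\beta^t(I+K)^{-1}\g_\beta$. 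On each truncated block the residual integral becomes a finite-dimensional Gaussian perturbation of low-frequency $\beta$-ensemble Fourier modes, whose exponential moments can be bounded via standard $\beta$-ensemble concentration, for instance via the Dumitriu--Edelman tridiagonal model or loop-equation methods.
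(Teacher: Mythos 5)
Your exact pullback identity
\[
D_n^\beta[e^g]=\mathrm{cap}(\gamma)^{\beta n^2/2+(1-\beta/2)n}\,e^{na_0/2}\,Z_{n,\beta}(\T)\,\E_{n,\beta}\Big[\exp\Big(2\mathbf V^t\g_\beta-\tfrac{\beta}{2}\mathbf V^tK\mathbf V\Big)\Big]
\]
is correct, and so is the formal Gaussian computation showing that the remaining expectation should tend to $(\det(I+K))^{-1/2}\exp\big(\tfrac{2}{\beta}\g_\beta^t(I+K)^{-1}\g_\beta\big)$. This is a clean conceptual reformulation, and your first rigorous step — extend the change-of-variables method of \cite{Joh88} to arbitrary $\beta>0$ to obtain the asymptotics of the ratio $D_n^\beta[e^g]/D_n^\beta[1]$ — is exactly Theorem \ref{mainthm} and constitutes the bulk of the paper's work.

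Your second step does not close, however. The claim that the ratio formula, Selberg's integral, and the $\beta=2$ result of \cite{Joh22} together ``isolate'' the $(\det(I+K))^{-1/2}$ prefactor is not correct: the ratio formula only relates $D_n^\beta[e^g]$ to $D_n^\beta[1]$ at one and the same $\beta$, so it gives no way to transport the $\beta=2$ answer to other temperatures. The fallback you sketch — compute $\E_{n,\beta}\big[\exp(-\tfrac{\beta}{2}\mathbf V^tK\mathbf V+\cdots)\big]$ directly by truncating $K$ to $K_N$ and invoking $\beta$-ensemble concentration — also underestimates the difficulty. The exponent $-\tfrac{\beta}{2}\mathbf V^tK\mathbf V$ equals $\beta\sum_{\mu<\nu}\log\big|(\phi(e^{\i\theta_\mu})-\phi(e^{\i\theta_\nu}))/(e^{\i\theta_\mu}-e^{\i\theta_\nu})\big|$ up to explicit linear and constant terms, and is therefore \emph{a priori} $O(n^2)$ on $\T^n$. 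To show it is typically $O(1)$ one must prove that the tilted measure concentrates on configurations close to Fekete configurations of $\gamma$ (not of $\T$), which is precisely the content of Lemmas \ref{lemma:En}--\ref{lemma:tmu}; a CLT plus a Hilbert--Schmidt truncation of $K$ do not by themselves deliver this.

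The paper avoids the exponential-moment problem entirely by the curve deformation $\gamma_s=\phi_s(\T)$, $\phi_s(z)=s\phi(z/s)$, which interpolates from $\T$ (where $K(0)=0$) to $\gamma$. Differentiating $\log Z_{n,\beta}(\gamma_s)$ in $s$ converts the task into computing a \emph{first} moment of a quadratic-plus-linear functional of $\mathbf V$ under the Coulomb gas on $\gamma_s$, see \eqref{eq:partitionlimit}; these moments follow from the uniform-in-$s$ relative Szeg\H{o} theorem (Theorem \ref{smainthm}) by analyticity (Lemmas \ref{lemma:uniformbounds}, \ref{lemma:moments}), and the $\log\det$ emerges from integrating $\tfrac12\operatorname{Tr}K'(s)(I+K(s))^{-1}$ over $s\in[0,1]$ rather than from a Gaussian-type integral. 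If you pursued the truncation route rigorously you would end up reproving most of Section~\ref{sec:relSz}'s concentration machinery plus additional estimates to control the truncation error uniformly in $n$, so the deformation argument is the genuinely shorter path.
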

Note that when $\beta=2$ and $\gamma=\T$ the left side of \eqref{eq:asym} is an ordinary Toeplitz determinant and the statement of the theorem is the strong Szeg\H{o} limit theorem.
The strong assumptions on the curve $\gamma$ and the function $g$ come from the techniques used in the proof. In view of the results in \cite{Joh22}, it is natural to conjecture that it should be enough to assume that $K$ is a trace class operator. The eigenvalues of $K$ are plus/minus the singular values of $B$, so that $\det(I+K)=\det(I-BB^*)$, which means that $K$ is trace-class is equivalent to the Grunsky operator being a Hilbert-Schmidt operator. Curves for which $B$ is a Hilbert-Schmidt operator are called Weil-Petersson quasicircles and form an interesting class of curves, see \cite{Bish} and \cite{Wang}. The Fredholm determinant is directly related to the \emph{Loewner energy} $I^L(\gamma)$ of the curve, in fact
$I^L(\gamma)=-12\log\det(I-BB^*)$, and the Loewner energy is finite if and only if $\gamma$ is a Weil-Petersson quasicircle, see \cite{Wang} for more on this.
Concerning the regularity condition on $g$ it can be somewhat relaxed, see Remark \ref{rem:greg}. The optimal condition on $g$ is not clear. That the function $g$ has a finite $H^{1/2}$-norm is not sufficient for \eqref{eq:asym} to hold even in the case when $\gamma$ is the
unit circle, see \cite{Lam} for a counterexample when $\beta=4$.
The fact that $I+K$ in \eqref{eq:asym} is invertible is a consequence of the strengthened Grunsky inequality, see the discussion following Lemma \ref{lemma:grunskyineq}. 
For the relation between the formula in \eqref{eq:asym} and the corresponding formula in \cite{WieZab22} see Remark \ref{rem:NJO}.

Let $\psi=\phi^{-1}$ be the conformal map from $\Omega_+$ to the exterior of the
unit circle. If $\gamma$ is a $C^{k+\alpha}$-curve, $0<\alpha<1$,  then by Kellogg's theorem $\phi$ extends to a bijective $C^{k+\alpha}$-function on $\T$. Hence, if $k\ge 1$, $\psi'$ is well-defined on $\T$. Define $g_\beta$ on $\gamma$ by,
\[
g_\beta\circ\phi(e^{\i\theta})=2\begin{pmatrix}
    \mathbf{x_\theta} \\ \mathbf{y_\theta}
\end{pmatrix}^t
\g_\beta.
\]
We see that
\[
g_\beta(z)=g(z)-\frac{a_0}2+(\frac \beta 2-1)\log|\psi'(z)|, \quad z\in\gamma,
\]
by \eqref{ggvec}, \eqref{logphiprime}, \eqref{gbetavec}, and the fact that $\psi$ is the inverse function of $\phi$.  In the case when $g$ is real-valued, an alternative expression for the quantity in the exponent in \eqref{eq:asym} can be given in terms of Dirichlet energies from the following proposition.

\begin{prop}\label{prop:diffvar}
Assume that $g$ is a real-valued $C^{1+\alpha}$ function on the $C^{4+\alpha}$ Jordan curve $\gamma$. Let $g_{\pm}$ be the (bounded) harmonic extension of $g$ to
$\Omega_\pm$. Then,
\begin{equation}\label{Dirichlet}
\g^t (I+K)^{-1}\g   = \frac{1}{8\pi} \Big( \int_{\Omega_+} |\nabla g_{+}|^2 \d x \d y + \int_{\Omega_-} |\nabla g_{-}|^2 \d x \d y \Big). 
 \end{equation}
\end{prop}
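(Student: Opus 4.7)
The plan is to compute each Dirichlet integral in closed form by conformal invariance and combine them using the Grunsky operator, which encodes the conformal welding linking the two sides of $\gamma$. By conformal invariance and the Poisson expansion derived from \eqref{ggvec}, pulling the exterior energy back via $\phi\colon\D^*\to\Omega_+$ gives
\[
\int_{\Omega_+}|\nabla g_+|^2\ud x\ud y=\int_{\D^*}|\nabla(g_+\circ\phi)|^2\ud x\ud y=\pi\sum_{k\ge1}k(a_k^2+b_k^2)=4\pi\,\g^t\g
\]
by a direct polar-coordinate calculation. Applying the same argument with the interior Riemann map $\tilde\phi\colon\D\to\Omega_-$ gives $\int_{\Omega_-}|\nabla g_-|^2=4\pi\,\tilde\g^t\tilde\g$, where $\tilde\g\in l_2(\N)\oplus l_2(\N)$ collects the analogues of \eqref{gvector} for the Fourier expansion of $g\circ\tilde\phi$ on $\T$.

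The two Fourier vectors are linked by the conformal welding $\sigma=\tilde\phi^{-1}\circ\phi\colon\T\to\T$. Writing $M$ for the bounded real-linear operator on $l_2(\N)\oplus l_2(\N)$ induced by $\sigma$, so that $\tilde\g=M\g$, the crucial claim is the operator identity
\[
M^tM=(I-K)(I+K)^{-1},
\]
a quadratic-form avatar of the strengthened Grunsky inequality. Granting this, since $I=(I+K)(I+K)^{-1}$,
\[
\g^t\g+\tilde\g^t\tilde\g=\g^t\bigl[(I+K)+(I-K)\bigr](I+K)^{-1}\g=2\,\g^t(I+K)^{-1}\g,
\]
so the total Dirichlet energy equals $8\pi\,\g^t(I+K)^{-1}\g$ and \eqref{Dirichlet} follows on dividing by $8\pi$.

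The main obstacle is establishing the identity for $M^tM$. A natural route is to combine Green's identity with \eqref{KGrunskyrel}. Since $g_+$ is bounded and harmonic on $\Omega_+$ one has $\int_\gamma\partial_\nu g_+\ud s=0$, and Green's formula then gives
\[
4\pi(\tilde\g^t\tilde\g-\g^t\g)=\int_{\Omega_-}|\nabla g_-|^2-\int_{\Omega_+}|\nabla g_+|^2=\int_\gamma g\bigl(\partial_\nu g_-+\partial_\nu g_+\bigr)\ud s,
\]
with $\nu$ the outward normal from $\Omega_-$. The boundary integral operator $g\mapsto\partial_\nu g_-+\partial_\nu g_+$, expressed via the Green's functions of $\Omega_\pm$ and pulled back to $\T\times\T$ by $\phi$, has kernel involving precisely $\log|(\phi(e^{\i\theta})-\phi(e^{\i\omega}))/(e^{\i\theta}-e^{\i\omega})|$; substituting \eqref{KGrunskyrel} then identifies the resulting quadratic form with $-2\g^tK(I+K)^{-1}\g$ in the $\g$-basis, which is exactly $\g^t[(I-K)(I+K)^{-1}-I]\g$, yielding the claimed identity. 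The regularity hypotheses $\gamma\in C^{4+\alpha}$ and $g\in C^{1+\alpha}$ ensure, via Kellogg's theorem, that $\phi,\tilde\phi$ extend to $C^{4+\alpha}$ functions up to $\T$, guaranteeing absolute convergence of all series and boundary integrals involved; moreover $\|K\|<1$ by the strengthened Grunsky inequality, so $I+K$ is invertible.
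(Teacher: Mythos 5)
Your two direct computations are sound: by conformal invariance and the standard Fourier formula for the Dirichlet energy of a harmonic function on $\D$ and $\D^*$, one indeed gets $\int_{\Omega_+}|\nabla g_+|^2 = 4\pi\,\g^t\g$ and $\int_{\Omega_-}|\nabla g_-|^2 = 4\pi\,\tilde\g^t\tilde\g$. However, all of the real content of the proposition is now packed into the claimed identity $M^tM=(I-K)(I+K)^{-1}$, and this you do not prove. In fact that identity, at the quadratic-form level $\tilde\g^t\tilde\g=\g^t(I-K)(I+K)^{-1}\g$, is \emph{exactly equivalent} to \eqref{Dirichlet}, as one sees by writing $2(I+K)^{-1}=I+(I-K)(I+K)^{-1}$. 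So your argument as written reduces the proposition to a restatement of itself.

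The sketch you offer for the key identity does not close the gap. You apply Green's formula to get $\int_{\Omega_-}|\nabla g_-|^2-\int_{\Omega_+}|\nabla g_+|^2=\int_\gamma g\bigl(\partial_\nu g_-+\partial_\nu g_+\bigr)\,ds$, which is correct, but then assert that the boundary operator $g\mapsto\partial_\nu g_-+\partial_\nu g_+$ ``has kernel'' $\log\bigl|\frac{\phi(e^{\i\theta})-\phi(e^{\i\omega})}{e^{\i\theta}-e^{\i\omega}}\bigr|$. That is the kernel of the single-layer potential, not of a Dirichlet-to-Neumann combination; the normal derivatives of the Poisson extensions $g_\pm$ are governed by derivatives of the Green's functions of $\Omega_\pm$, which is a genuinely different operator. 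More tellingly, the Grunsky kernel via \eqref{KGrunskyrel} represents multiplication by $K$ in the Fourier basis, so a quadratic form built directly from that kernel would produce $\g^tK\g$, not $\g^tK(I+K)^{-1}\g$; the factor $(I+K)^{-1}$ can only arise from \emph{inverting} an integral equation, and your sketch never performs that inversion. This is precisely where the paper does the work: it introduces $h$ as the solution of the integral equation \eqref{inteqn} (so that $(I+K)^{-1}$ appears via \eqref{hgeqn}), identifies $\partial h/\partial s$ with the Neumann jump $\frac1\beta(\partial_\nu g_+-\partial_\nu g_-)$ through the single-layer jump relations \eqref{Plemelj}--\eqref{dhs}, and then applies Green's theorem once. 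If you wish to follow your route, you would need to independently establish $M^tM=(I-K)(I+K)^{-1}$ for the welding composition operator --- which is a theorem about Grunsky operators and conformal welding of comparable depth to the proposition itself --- rather than appealing to a kernel identification that does not hold.
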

The conditions in the theorem are such that the computations in the proof work easily, but they can be weakened.
If we apply this to the function $g_\beta$ we get the formula
\begin{equation*}
\frac{2}{\beta}  \mathbf{g}_\beta^t (I+K)^{-1}\mathbf{g}_\beta   = \frac{1}{4\pi\beta} \Big( \int_{\Omega_+} |\nabla g_{\beta,+}|^2 \d x \d y + \int_{\Omega_-} |\nabla g_{\beta,-}|^2 \d x \d y \Big), 
 \end{equation*}
 where $g_{\beta,\pm}$ is the (bounded) harmonic extension of $g_\beta$ to $\Omega_\pm$.

Denote by $\E_{\gamma^n}^\beta$ expectation with respect to the probability measure \eqref{probbasic} on $\gamma^n=\gamma\times\dots\times\gamma$ so that
\[
 \E_{\gamma^n}^\beta \big[ \prod_{\mu=1}^n e^{g(z_\mu)} \big]=\frac{D_n^\beta[e^g]}{D_n^\beta[1]}.
 \]
The proof of Theorem \ref{mainthm2} is based on the following \textit{relative Szeg\H{o} type theorem}.

\begin{thm}\label{mainthm}
Assume that $\gamma$ is a $C^{9+\alpha}$ Jordan curve and $g$ a complex-valued $C^{4+\epsilon}$ function on $\gamma$, for some $\alpha>0$, $\epsilon>0$. 
Then, as $n\to\infty$,
\begin{equation}\label{eq:main}
   \E_{\gamma^n}^\beta \big[ \prod_{\mu=1}^n e^{g(z_\mu)} \big] = \exp \Big(n\frac{a_0}{2}+\frac{2}{\beta}  \g^t (I+K)^{-1}\g+2\big(1-\frac{2}{\beta}\big) \dd^t(I+K)^{-1}\g + o(1)\Big).
\end{equation}
\end{thm}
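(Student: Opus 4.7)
The plan is to pull the integral back to the unit circle via $\phi$, reducing the problem to computing the large-$n$ asymptotics of one type of Laplace functional against the circular $\beta$-ensemble $\E_{\T^n}^\beta$. Parameterizing $z_\mu=\phi(e^{\i\theta_\mu})$, combining the Jacobian $|\phi'(e^{\i\theta_\mu})|$ with the Grunsky identity \eqref{KGrunskyrel}, and using the diagonal relation (obtained by comparing \eqref{Grunsky2} with \eqref{KGrunskyrel})
\[
\binom{\mathbf{x}_\theta}{\mathbf{y}_\theta}^{\!t}\!K\binom{\mathbf{x}_\theta}{\mathbf{y}_\theta}=\log\mathrm{cap}(\gamma)-\log|\phi'(e^{\i\theta})|,
\]
a short algebraic manipulation with $\sum_{\mu<\nu}=\tfrac12\sum_{\mu,\nu}-\tfrac12\sum_\mu$ yields
\[
D_n^\beta[e^g]=\mathrm{cap}(\gamma)^{\beta n^2/2}\,e^{na_0/2}\,Z_{n,\beta}(\T)\,\E_{\T^n}^\beta\!\Big[\exp\bigl(2\mathbf{S}^t\g_\beta-\tfrac{\beta}{2}\mathbf{S}^tK\mathbf{S}\bigr)\Big],
\]
where $\mathbf{S}:=\sum_{\mu=1}^n\binom{\mathbf{x}_{\theta_\mu}}{\mathbf{y}_{\theta_\mu}}$, and the same formula with $g\equiv 0$ (so $\g=0$, $a_0=0$, $\g_\beta=(\beta/2-1)\dd$) for $D_n^\beta[1]$. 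Taking the ratio reduces the theorem to computing the asymptotics of
\[
I_n(\mathbf{w}):=\E_{\T^n}^\beta\!\Big[\exp\bigl(\mathbf{S}^t\mathbf{w}-\tfrac{\beta}{2}\mathbf{S}^tK\mathbf{S}\bigr)\Big]
\]
at $\mathbf{w}=2\g_\beta$ and $\mathbf{w}=2(\beta/2-1)\dd$.

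The central claim is that for sufficiently rapidly decaying $\mathbf{w}\in\ell^2(\N)\oplus\ell^2(\N)$,
\begin{equation*}
I_n(\mathbf{w})=\frac{1+o(1)}{\sqrt{\det(I+K)}}\exp\!\Big(\frac{1}{2\beta}\mathbf{w}^t(I+K)^{-1}\mathbf{w}\Big)\quad(n\to\infty).
\end{equation*}
The heuristic is transparent: under the C$\beta$E, $\mathbf{S}$ is asymptotically a centered Gaussian with covariance $(1/\beta)I$, and pretending the density is literally Gaussian reduces $I_n(\mathbf{w})$ to a Gaussian integral that, on completing the square (using that $I+K$ is positive definite by the strengthened Grunsky inequality), evaluates to exactly the displayed expression. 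I would rigorize this by (a) truncating $K\mapsto K_N$ to its action on the first $N$ Fourier modes, so the quadratic reduces to $\mathbf{S}_N^tK_N\mathbf{S}_N$ in finitely many C$\beta$E statistics; (b) for fixed $N$, invoking the multidimensional CLT for C$\beta$E linear statistics (via the Killip--Nenciu CMV tridiagonal model or a direct moment/cumulant calculation) with enough moment control to pass to the limit inside the expectation and evaluate the resulting $2N$-dimensional Gaussian integral explicitly; and (c) controlling the errors from the truncations $K\mapsto K_N$ and $\mathbf{w}\mapsto\mathbf{w}_N$ uniformly in $n$ as $N\to\infty$. Inserting $\mathbf{w}_1=2\g_\beta$ and $\mathbf{w}_0=2(\beta/2-1)\dd$ into the Gaussian limit, expanding $\g_\beta=\g+(\beta/2-1)\dd$, and taking the ratio cancels the $\det(I+K)^{-1/2}$ prefactors as well as the $\dd^t(I+K)^{-1}\dd$ contributions, leaving precisely the exponent $\frac{2}{\beta}\g^t(I+K)^{-1}\g+2(1-\tfrac{2}{\beta})\dd^t(I+K)^{-1}\g$ appearing in \eqref{eq:main}.

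The principal obstacle is step (c). Because $\mathbf{S}^tK\mathbf{S}$ is an \emph{unbounded} quadratic functional of the C$\beta$E configuration, weak convergence of $\mathbf{S}_N$ alone cannot control $I_n(\mathbf{w})$: one needs uniform-in-$n$ exponential-moment bounds on $\mathbf{S}^tK\mathbf{S}-\mathbf{S}_N^tK_N\mathbf{S}_N$, or sufficiently sharp joint cumulant estimates for the high-Fourier-mode statistics. This is where the strong regularity hypotheses enter: $\gamma\in C^{9+\alpha}$ forces (via Kellogg's theorem and standard Fourier/Grunsky estimates) polynomial decay of the $a_{kl}$, and $g\in C^{4+\epsilon}$ forces the corresponding decay of $\g$. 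A related subtlety is that the Gaussian approximation must hold uniformly in the presence of the full quadratic tilt $-\tfrac{\beta}{2}\mathbf{S}^tK\mathbf{S}$, not merely for its finite-rank truncation; handling this will likely require a perturbative scheme removing Fourier modes one at a time, combined with careful cumulant bounds under the C$\beta$E.
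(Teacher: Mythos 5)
Your algebraic reduction is correct: pulling back via $\phi$, using \eqref{KGrunskyrel} and \eqref{logphiprime}, and normalizing $\mathrm{cap}(\gamma)=1$, $a_0=0$ indeed gives
$D_n^\beta[e^g]=Z_{n,\beta}(\T)\,\E_{\T^n}^\beta[\exp(2\mathbf{S}^t\g_\beta-\tfrac{\beta}{2}\mathbf{S}^tK\mathbf{S})]$,
and the Gaussian computation, with covariance $\tfrac{1}{\beta}I$ for $\mathbf{S}$, yields the right exponent after the ratio. But the route is genuinely different from the paper's, and the gap you flag in step (c) is not a loose end: it is where the entire difficulty of the theorem sits, and nothing in your sketch supplies the needed mechanism.

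Two structural issues. First, your plan requires proving the individual limit $I_n(\mathbf{w})\to\det(I+K)^{-1/2}\exp(\tfrac{1}{2\beta}\mathbf{w}^t(I+K)^{-1}\mathbf{w})$ for two values of $\mathbf{w}$; that statement, with its Fredholm-determinant prefactor, is equivalent to the full partition-function asymptotics, which the paper treats separately and by a different method (Proposition~\ref{prop:partition}, via a deformation $\gamma_s$ and differentiation in $s$). The paper's proof of Theorem~\ref{mainthm} deliberately \emph{avoids} the determinant by working directly with the ratio and inserting a second change of variables $\theta_\mu\mapsto\theta_\mu-\tfrac{1}{n}H(\theta_\mu)$, with $H$ chosen to solve the integral equation \eqref{GH}. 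That transport map makes the dangerous $O(n)$ contribution $R_n^s$ in \eqref{eq:UTR} nearly cancel (Lemma~\ref{lemma:Rn}, Remark~\ref{rem:inteq}), leaves a finite piece $U_n^s$ whose mean gives the claimed exponent via Lemma~\ref{lem:gvar}, and pushes everything else into terms $T_n^s,S_n^s$ that vanish. All of these estimates are established only on a high-probability energy set $E_{n,s}$, where the configuration is near the Fekete points and $\sum_\mu t_\mu^2$ is bounded (Lemma~\ref{lemma:tmu}); the strengthened Grunsky inequality is used precisely here.

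Second, your step (b)--(c) would need a multivariate CLT for $(\mathbf{S}_N)$ at the level of \emph{exponential} moments together with a uniform-in-$n$ bound for the tail $\mathbf{S}^tK\mathbf{S}-\mathbf{S}_N^tK_N\mathbf{S}_N$ \emph{inside} the exponential under the tilted measure. Crude norm bounds fail because $\|\mathbf{S}\|_{\ell^2}^2$ is not tight (its Gaussian heuristic expectation diverges), so the bound must exploit the decay of the Grunsky coefficients \eqref{eq:coeffdecay}, and it must do so under the already-tilted weight $\exp(-\tfrac{\beta}{2}\mathbf{S}^tK\mathbf{S})$, not under the plain C$\beta$E. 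The paper's Lemma~\ref{lemma:uniformbounds} provides exactly the kind of uniform moment bounds one would want for this, but note that it is \emph{derived from} the uniform bound \eqref{expbound} in Theorem~\ref{smainthm}; importing it here would be circular. Without an independent mechanism for those uniform bounds — which in the paper comes from Lemmas~\ref{lemma:En}--\ref{lemma:tmu} and the change-of-variables decomposition — the proof is incomplete.
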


By replacing $g$ with $zg$, $z\in\C$, in \eqref{eq:main} and recognizing the Laplace transform of $\sum_\mu g(z_\mu)$ on the left-hand side of \eqref{eq:main}, we obtain

\begin{cor}
Assume that $\gamma$ is a $C^{9+\alpha}$ Jordan curve, $g\in C^{4+\epsilon}(\gamma)$, for some $\alpha>0$, $\epsilon>0$.
Then, as $n\to\infty$,
\[\sum_\mu g(z_\mu)-n \int_0^{2\pi} g(\phi(e^{\i\theta})) \frac{\d\theta}{2\pi} \overset{d}{\Rightarrow} \mathcal N(\mu_g,\sigma_g^2) \]
where 
\[ \mu_g = 2\big(1-\frac{2}{\beta}\big) \dd^t(I+K)^{-1}\g,\quad \sigma_g^2 = \frac{4}{\beta}  \g^t (I+K)^{-1}\g.\]
\end{cor}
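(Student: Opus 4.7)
The plan is to derive the central limit theorem from Theorem~\ref{mainthm} by specializing the auxiliary function to a purely imaginary scaling of $g$, identifying the resulting quantity as the characteristic function of the centered linear statistic, and invoking L\'evy's continuity theorem.

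First, for any $t\in\R$ the function $itg$ belongs to $C^{4+\epsilon}(\gamma)$, so Theorem~\ref{mainthm} applies with $g$ replaced by $itg$. The Fourier coefficient $a_0$ and the vector $\g$ from \eqref{gvector} depend linearly on $g\circ\phi$, while $\dd$, $K$, and the curve $\gamma$ are unchanged. Hence this substitution sends $a_0\mapsto ita_0$ and $\g\mapsto it\g$, turning the quadratic term into $-t^2\g^t(I+K)^{-1}\g$ and the linear $\dd^t(I+K)^{-1}\g$ into $it\,\dd^t(I+K)^{-1}\g$. Simultaneously, the left-hand side of \eqref{eq:main} becomes the characteristic function $\E_{\gamma^n}^\beta[\exp(it\sum_\mu g(z_\mu))]$.

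Next, integrating the Fourier expansion \eqref{ggvec} over $\theta\in[0,2\pi]$ shows $\tfrac{a_0}{2}=\int_0^{2\pi}g(\phi(e^{\i\theta}))\tfrac{\d\theta}{2\pi}$, so after multiplying both sides of \eqref{eq:main} by $\exp(-itna_0/2)$ and letting $n\to\infty$ we obtain
\begin{equation*}
\E_{\gamma^n}^\beta\Bigl[\exp\Bigl(it\Bigl(\sum_\mu g(z_\mu)-n\int_0^{2\pi} g(\phi(e^{\i\theta}))\tfrac{\d\theta}{2\pi}\Bigr)\Bigr)\Bigr] \longrightarrow \exp\bigl(it\mu_g-\tfrac12 t^2\sigma_g^2\bigr),
\end{equation*}
which is exactly the characteristic function of a $\mathcal N(\mu_g,\sigma_g^2)$ random variable with the stated $\mu_g$ and $\sigma_g^2$. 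L\'evy's continuity theorem then upgrades this pointwise convergence of characteristic functions, whose limit is continuous at $t=0$, to convergence in distribution.

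I do not anticipate a serious obstacle: the argument is a routine Fourier-inversion consequence of Theorem~\ref{mainthm}. The only point to verify is that the $o(1)$ in Theorem~\ref{mainthm} remains negligible after the substitution $g\mapsto itg$, which is automatic because the theorem is then applied to a fixed function in $C^{4+\epsilon}(\gamma)$; uniformity in $t$ is not needed for L\'evy's theorem. When $g$ is complex-valued one obtains the joint CLT for the real and imaginary parts by the same argument applied to $it_1\Re g+it_2\Im g$.
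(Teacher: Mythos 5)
Your argument is correct and is essentially the proof the paper sketches: you substitute a scalar multiple of $g$ into Theorem~\ref{mainthm} and read off the limiting Laplace/Fourier transform of the centered linear statistic, then conclude by L\'evy's continuity theorem. The only cosmetic difference is that you restrict the substitution to the purely imaginary axis ($z=it$), whereas the paper phrases it as $g\mapsto zg$ with $z\in\C$; both rely on Lemma~\ref{lemma:real} to cover non-real arguments, and your version of the bookkeeping (the signs on the quadratic and linear terms, and the identification $a_0/2=\int g\circ\phi\,\tfrac{\d\theta}{2\pi}$) is all accurate.
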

Note that 
\[
\frac {a_0}2=\int_0^{2\pi} g(\phi(e^{\i\theta})) \frac{\d\theta}{2\pi}=\int_\gamma g(z)\,d\nu_{\text{eq}}(z),
\]
is the expectation of $g$ with respect to the equilibrium measure $\nu_{\text{eq}}$ on the curve $\gamma$.

We recall that the case $\gamma=\T$ gives the Circular $\beta$-Ensemble, which can be realized as the eigenvalues of a random matrix constructed in \cite{KilNen}. Results related to Theorem \ref{mainthm} for the C$\beta$E include \cite{JiaMat}, which gives a CLT for polynomials that follows from estimates on the moments, and \cite{Webb}, which generalizes this result and gives a rate of convergence. In \cite{Lam} the analogue of \eqref{eq:main} for $\gamma= \T$ in the mesoscopic regime was obtained, and in \cite{HarLam}, the high temperature regime was considered, still with $\gamma=\T$.

The paper is organized as follows: in Section \ref{sec:prel} we give some preliminary results that will be used in the proofs of the main theorems, and we also prove Proposition \ref{prop:diffvar}. The proof of the relative Szeg\H{o} type theorem, Theorem \ref{mainthm}, is given in Section \ref{sec:relSz}. The last section deals with the asymptotics of the partition function which combined with Theorem \ref{mainthm} will prove Theorem \ref{mainthm2}.

\vskip 0.3cm
\noindent
{\bf Acknowledgement}. We thank Yacin Ameur and Fredrik Viklund for helpful comments on the paper.

\section{Preliminaries and an integral equation}\label{sec:prel}
In this section we will first discuss some preliminary results that we will need. We will also discuss a certain integral equation that will be important in the proof of the relative Szeg\H{o}
theorem, and we will give the proof of Proposition \ref{prop:diffvar}.

\subsection{Preliminaries}
From now on, we assume without loss of generality that $\mathrm{cap}(\gamma)=1$. We can reduce the general case to this one by dividing both sides of \eqref{eq:maindef} by $\mathrm{cap}(\gamma)^{\beta n^2/2+(1-\beta/2)n}$ and replacing $g\circ\phi$ by $g\circ(\phi/\mathrm{cap}(\gamma))$. We also assume that the mean $a_0/2=0$, and that $\gamma$ is  $C^{m+\alpha}$ for some $\alpha>0$, $m\ge 1$. Consequently, by Kellogg's theorem, $\phi$ extends to be $C^{m+\alpha}$ on $\overline{\D^*}$. Note that the Grunsky coefficients are Fourier coefficients of a function on $\T^2$ and hence we can use an integration by parts argument to see that there exists a constant $A$, that only depends on $\gamma$, such that the Grunsky coefficients satisfy
\begin{equation}\label{eq:coeffdecay}
    |a_{kl}|\leq A k^{-p-\alpha/2}l^{-q-\alpha/2}, \quad p+q=m-1,
\end{equation} 
$k,l\ge 1$. For Theorem \ref{mainthm2} we will take $m=12$, and for Theorem \ref{mainthm} we will take $m=9$.
The Grunsky coefficients also satisfies the following inequality
\[\sum_{k\geq 1} \big| \sum_{l\geq 1} \sqrt{kl}a_{kl}w_l \big|^2 \leq \sum_{k\geq 1} |w_k|^2 \]
for any complex sequence $(w_k)_{k\geq 1}\in \ell_2(\C)$. This is known as the \textit{Grunsky inequality}, see e.g.~ \cite{Pom}. We will need the following stronger version.

\begin{lemma}[The strengthened Grunsky inequality]\label{lemma:grunskyineq}
There is a constant $\kappa<1$ such that
\begin{equation}\label{Grunskyineq}
\sum_{k\geq 1} \big| \sum_{l\geq 1} \sqrt{kl}a_{kl}w_l \big|^2 \leq \kappa^2 \sum_{k\geq 1} |w_k|^2 
\end{equation}
for any complex sequence $(w_k)_{k\geq 1}\in \ell_2(\C)$ if and only if $\phi(\D^*)$ is bounded by a quasicircle.
\end{lemma}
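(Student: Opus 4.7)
The plan is to rephrase the inequality as a statement about the operator norm of $B$ on $\ell_2(\N)$ and then invoke the classical Pommerenke--Kühnau characterization of quasicircles. Since the ordinary Grunsky inequality already gives $\|B\|_{\mathrm{op}}\leq 1$, the content of \eqref{Grunskyineq} is the strict bound $\|B\|_{\mathrm{op}}\leq \kappa<1$. Thus the claim to prove is: $\|B\|_{\mathrm{op}}<1$ if and only if $\phi$ admits a quasiconformal extension to $\hat{\C}$ (equivalently, $\gamma=\phi(\T)$ is a quasicircle).

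For the direction $(\Leftarrow)$, suppose $\gamma$ is a $k$-quasicircle, so that $\phi$ extends to a $k$-quasiconformal homeomorphism $\Phi:\hat{\C}\to\hat{\C}$. The plan is to polarize \eqref{Grunsky}: for $w=(w_k)\in\ell_2$ of finite support, set $p(z)=\sum_{k\geq 1}\frac{w_k}{\sqrt{k}}z^{-k}$ and write the quadratic form $\langle Bw,w\rangle$ as a contour integral against $\log\frac{\phi(z)-\phi(\zeta)}{z-\zeta}$. By Stokes' theorem this contour integral can be converted into an area integral over $\D$ of the form $\int_{\D}\overline{\partial P}\cdot (\bar\partial\Phi)\,dA$, where $P$ is the holomorphic continuation inside $\D$ dictated by $p$ and $\Phi$. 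Applying Cauchy--Schwarz and the pointwise distortion bound $|\bar\partial\Phi|\leq k|\partial\Phi|$ a.e., together with the Bergman-space identity $\int_{\D}|\partial\Phi|^2\,dA=\mathrm{Area}(\Omega_-)+\int_{\D}|\bar\partial\Phi|^2\,dA$, yields $|\langle Bw,w\rangle|\leq k\|w\|^2$, hence $\|B\|_{\mathrm{op}}\leq k<1$.

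For the direction $(\Rightarrow)$, assume $\|B\|_{\mathrm{op}}\leq \kappa<1$. The plan is to run the previous calculation in reverse to construct a Beltrami coefficient $\mu$ on $\D$ with $\|\mu\|_\infty\leq \kappa$ and then invoke the measurable Riemann mapping theorem to extend $\phi$ quasiconformally across $\T$. Concretely, one recognizes $B$ as the compression to a suitable subspace of the Beurling--Ahlfors transform $\mathcal{T}\mu(z)=-\frac{1}{\pi}\,\mathrm{p.v.}\!\int \mu(\zeta)(z-\zeta)^{-2}\,dA(\zeta)$; the strict norm bound transfers to an $L^\infty$-bound on a compatible $\mu$, and solving $\bar\partial F=\mu\,\partial F$ with the boundary values prescribed by $\phi|_\T$ produces the desired quasiconformal extension. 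Consequently $\gamma=\phi(\T)$ is the image of the unit circle under a quasiconformal homeomorphism of $\hat{\C}$, i.e., a quasicircle.

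The main obstacle is the passage between an $\ell_2$-operator norm on an infinite matrix of Fourier-type coefficients and a pointwise $L^\infty$ bound on a Beltrami coefficient; this is the substantive content of Kühnau's theorem and rests on the representation of $B$ as a compression of the Beurling--Ahlfors transform together with duality in Bergman/Dirichlet spaces on $\Omega_\pm$. Since this equivalence is a classical and well-documented result, in practice I would appeal directly to the statement in Pommerenke's monograph rather than reproduce the full argument.
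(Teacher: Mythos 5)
The paper gives no proof of this lemma at all, simply citing Chapter~9.4 of Pommerenke's monograph, and your proposal ultimately does the same, so the two agree on the justification. Your intermediate sketch of K\"uhnau's theorem is broadly on target for the $(\Leftarrow)$ direction, which is the only one the paper actually uses (smooth curve $\Rightarrow$ quasicircle $\Rightarrow$ strict Grunsky bound): the idea of polarizing the Grunsky identity, converting the contour integral to an area integral over $\D$, and then applying Cauchy--Schwarz against the Beltrami bound $|\bar\partial\Phi|\le k|\partial\Phi|$ is the standard argument. Two caveats worth flagging: this estimates the bilinear form $\sum b_{kl}w_kw_l$ rather than $\|Bw\|$, so you are tacitly invoking the fact that for a complex \emph{symmetric} matrix the supremum of the bilinear form over $\|w\|=1$ equals the operator norm (a Takagi-type fact); and Stokes' theorem must be applied with some care, since $\Phi$ is only quasiconformal rather than smooth. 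The $(\Rightarrow)$ direction description---``running the previous calculation in reverse'' to manufacture a Beltrami coefficient from the norm bound on $B$---is substantially vaguer than the actual proof, which is variational rather than a direct inversion of the forward inequality; the norm bound does not by itself single out a compatible $\mu$. Since you explicitly defer to Pommerenke for the full proof and the paper does not use this direction, this gap does not affect the application, but the sketch as written would not survive being expanded into a complete argument.
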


A proof can be found in chapter 9.4 of \cite{Pom}. Since we assume that $\gamma$ is a $C^{9+\epsilon}$ curve it is clearly quasiconformal and hence \eqref{Grunskyineq} holds. This implies that the Grunsky operator satisfies $\|B\|\le\kappa<1$, and thus, for any real $\mathbf{x}$, $\mathbf{y}\in l^2(\N)$, 
\[ \big\|K \begin{pmatrix} \mathbf{x} \\ \mathbf{y} \end{pmatrix} \big\|_2^2 = \big\| B(\mathbf{x}-\i \mathbf{y}) \big\|_2^2 \leq \kappa^2 \| \mathbf{x}-\i \mathbf{y} \|_2^2 = \kappa^2 \big\| \begin{pmatrix} \mathbf{x} \\ \mathbf{y} \end{pmatrix} \big\|_2^2  \]
i.e.~ $\| K\| \le \kappa<1$ as well. In particular $I+K$ is invertible.

Thanks to the following lemma, we can assume that $g$ is real-valued when we prove Theorem \ref{mainthm}.

\begin{lemma}\label{lemma:real}
If \eqref{eq:main} holds for any real-valued function $g\in C^{4+\epsilon}(\gamma)$, then \eqref{eq:main} holds for any complex-valued  $g\in C^{4+\epsilon}(\gamma)$, $\epsilon>0$. The limit is the same but the Fourier coefficients $a_n$ and $b_n$, given by the usual formulas, are now complex numbers.
\end{lemma}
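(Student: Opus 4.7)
The plan is to reduce the complex-valued case to the real one by analytic continuation in two parameters. Write $g = g_1 + \i g_2$ with real $g_1, g_2 \in C^{4+\epsilon}(\gamma)$, and for $(s, t) \in \C^2$ consider the entire function
\[
F_n(s, t) := \E_{\gamma^n}^\beta\Bigl[\prod_{\mu=1}^n e^{(s g_1 + t g_2)(z_\mu)}\Bigr].
\]
Let $P_n(s, t)$ denote the corresponding exponent from the right-hand side of \eqref{eq:main}, applied to the real-linear combination $sg_1 + tg_2$. The key observation is that $K$ is a real operator on $l_2(\N)\oplus l_2(\N)$---the matrices $B^{(1)} = \Re B$ and $B^{(2)} = \Im B$ are real by construction---so $(I+K)^{-1}$ is real, $\dd$ is real, and the Fourier vectors $\g^{(1)}, \g^{(2)}$ associated to $g_1, g_2$ are real. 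Consequently $P_n(s, t)$ is a polynomial in $(s, t)$ with real coefficients (carrying a factor of $n$ in its linear part), and in particular entire.

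By hypothesis, $F_n(s, t) e^{-P_n(s, t)} \to 1$ for every $(s, t) \in \R^2$. The main analytic step is to show that this sequence is locally uniformly bounded on $\C^2$. Since $g_1, g_2$ are real,
\[
|F_n(s, t)| \leq \E_{\gamma^n}^\beta\Bigl[\prod_{\mu=1}^n e^{\Re(s)\, g_1(z_\mu) + \Re(t)\, g_2(z_\mu)}\Bigr] = F_n(\Re s, \Re t),
\]
and applying the real-case asymptotics to the right-hand side gives
\[
|F_n(s, t) e^{-P_n(s, t)}| \leq \exp\bigl(P_n(\Re s, \Re t) - \Re P_n(s, t) + o(1)\bigr).
\]
Because $P_n$ has real coefficients, its $O(n)$ linear part is real on $\R^2$ and cancels in the difference $P_n(\Re s, \Re t) - \Re P_n(s, t)$; what remains is a quadratic form in $(\Im s, \Im t)$ alone, independent of $n$ and bounded on compacts.

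With uniform bounds on compact subsets of $\C^2$ together with pointwise convergence on the totally real subspace $\R^2$, I would conclude by iterating the one-variable Vitali theorem: first in $s$ for each fixed real $t$, then in $t$ for each fixed complex $s$. This yields $F_n(s, t) e^{-P_n(s, t)} \to 1$ locally uniformly on $\C^2$, and evaluation at $(s, t) = (1, \i)$ returns \eqref{eq:main} for the original complex $g$.

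The main obstacle---and precisely the point that makes the argument work---is confirming that $P_n$ has real coefficients in $(s, t)$, so that the $O(n)$ linear term does not spoil the uniform bound after passage to complex arguments. This hinges on $K$ being a real matrix on $l_2(\N)\oplus l_2(\N)$ rather than a Hermitian operator on complex $l_2(\N)$, which is immediate from the definition \eqref{eq:K}.
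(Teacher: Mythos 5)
Your strategy is the same as the paper's: reduce to the real case by complexifying a parameter, use the real-case asymptotics to get local boundedness, and conclude by a Vitali/Montel argument plus uniqueness of analytic continuation. The paper uses a single parameter $\zeta$ multiplying $\Im g$ (so $f_n(\zeta)=\E_{\gamma^n}^\beta[\prod_\mu\exp(\Re g(z_\mu)+\zeta\Im g(z_\mu))]$) and evaluates at $\zeta=\i$; you use two parameters $(s,t)$ and evaluate at $(1,\i)$. That difference is cosmetic. Also note that, as the paper sets up Section~2, one assumes $a_0=0$ without loss of generality, so $P_n$ actually has no $n$-dependent linear part at all — your care on this point is harmless but unnecessary.

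There is, however, a genuine gap in your boundedness step. You write
\[
|F_n(s,t)e^{-P_n(s,t)}|\le\exp\bigl(P_n(\Re s,\Re t)-\Re P_n(s,t)+o(1)\bigr),
\]
claiming the $o(1)$ comes from ``applying the real-case asymptotics to the right-hand side.'' But the hypothesis gives $\log F_n(a,b)-P_n(a,b)\to0$ pointwise in $(a,b)\in\R^2$, not uniformly over a compact set of real parameters; each $(a,b)$ corresponds to a different real test function $ag_1+bg_2$, and the hypothesis is invoked afresh for each. As written, the $o(1)$ is $(s,t)$-dependent and you have not shown $\sup_{(s,t)\in K}\sup_{n\ge N}|F_n(s,t)e^{-P_n(s,t)}|<\infty$, which Vitali requires. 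The gap can be closed: either note that $(a,b)\mapsto\log F_n(a,b)$ is convex (it is a log-moment-generating function), so pointwise convergence of $\log F_n-P_n$ to $0$ on $\R^2$ upgrades automatically to locally uniform convergence; or, more simply and as the paper does, bound $F_n(\Re s,\Re t)$ directly on $|\Re s|,|\Re t|\le 2$ by a fixed sum such as $\sum_{\epsilon_1,\epsilon_2\in\{\pm1\}}\E_{\gamma^n}^\beta[\prod_\mu e^{2\epsilon_1 g_1(z_\mu)+2\epsilon_2 g_2(z_\mu)}]$, using $e^{rS}\le e^{2S}+e^{-2S}$ for $|r|\le2$. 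This reduces the uniform bound to finitely many applications of the hypothesis at fixed real functions, sidestepping any uniformity issue. Once the bound is secured, the rest of your argument goes through.
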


\begin{proof}
Assume that $g\in C^{4+\epsilon}(\gamma)$ is complex-valued. Define the analytic functions
 \[f_n(\zeta) = \E_{\gamma^n}^\beta\Big[\prod_\mu \exp(\Re g(z_\mu)+\zeta \Im g(z_\mu) )\Big], \]
$n\ge 1$, which are bounded by
\begin{align*}
   &|f_n(\zeta)| \leq \E_{\gamma^n}^\beta[\prod_\mu \exp(\Re g(z_\mu)+\Re(\zeta) \Im g(z_\mu) )] \\
   &\leq \E_{\gamma^n}^\beta[\prod_\mu \exp(\Re g(z_\mu)+2 \Im g(z_\mu) )]+\E_{\gamma^n}^\beta[\prod_\mu \exp(\Re g(z_\mu)-2 \Im g(z_\mu) )]
\end{align*}
if $|\zeta|\leq 2$. Let
\begin{equation*}
    \mathbf{v}_\pm = \frac{1}{2}\begin{pmatrix}
( \sqrt{k} (\Re a_k\pm 2\Im a_k))_{k\geq1} \\
( \sqrt{k} (\Re b_k\pm 2\Im b_k))_{k\geq1}.
\end{pmatrix},
\end{equation*}
Since we assume that the theorem is true if $g$ is real, we see that if $n$ is large enough
\begin{align*}
   &\E_{\gamma^n}^\beta[\prod_\mu \exp(\Re g(z_\mu)\pm 2 \Im g(z_\mu) )] \\
   &\leq 2\exp \Big(\tfrac{2}{\beta}  \mathbf{v}_\pm^t (I+K)^{-1}\mathbf{v}_\pm+2\big(1-\tfrac{2}{\beta}\big) (\dd^t(I+K)^{-1}\mathbf{v}_\pm) \Big).
\end{align*}
We have that $\|K\|<1$ by the strengthened Grunsky's inequality, $\|\dd\|_2$ is bounded because of \eqref{eq:coeffdecay}, and 
\[ \|\mathbf{v}_\pm\|_2^2 = 4\sum_{k\geq 1} k(|a_k|^2+|b_k|^2)< \infty. \]
by our assumption on the regularity of $g$. Thus,
\[
 |f_n(\zeta)| \leq 2\sum_{\star=\pm} \exp \Big( \|\mathbf{v}_\star\|_2 (1-\|K\|)^{-1} \Big(\tfrac{2}{\beta} \|\mathbf{v}_\star\|_2 +2\big|1-\tfrac{2}{\beta}\big| \|\mathbf{d}\|_2 \Big)
 \]
uniformly on $|\zeta|\leq 2$, for all $n$ large enough. By Montel's theorem, the family $\{f_n\}_{n\geq 1}$ is normal on $|\zeta|<2$ so there is a subsequence converging uniformly on compact subsets. But the sequence itself converges pointwise on the real line, whence uniformly on $|\zeta|\leq 1$. In particular, it converges at $z=\i$ to the desired limit.
\end{proof}

\subsection{The integral equation}

From now on we will assume that the function $g$ on $\gamma$ is real-valued. The starting point for the analysis of the asymptotics of $D_n^\beta[e^g]$ is to 
make the change of variables $z_{\mu}= \phi(e^{\i \theta_{\mu}})$ in the integral in the right side of \eqref{eq:maindef} so that we get a particle system on the circle instead. 
We write
\[
\frac{\beta}{2}\sum_{1\leq \mu\neq \nu \leq n} \log \lvert  \phi(e^{\i \theta_{\mu}})-\phi(e^{\i \theta_{\nu}})\rvert + \sum_\mu\log|\phi'(e^{\i \theta_\mu})|=
\frac{\beta}{2}F_n(\theta) +\big(1-\frac{\beta}{2}\big)\sum_\mu\log|\phi'(e^{\i \theta_\mu})|,
\]
where
\begin{align}\label{Fn}
    F_n(\theta) &= \sum_{1\leq \mu,\nu \leq n} \log \Big\lvert \frac {\phi(e^{\i \theta_{\mu}})-\phi(e^{\i \theta_{\nu}})}{e^{\i \theta_{\mu}}-e^{\i \theta_{\nu}}} \Big\rvert + \sum_{1\leq \mu\neq\nu \leq n} \log\lvert e^{\i \theta_{\mu}}-e^{\i \theta_{\nu}}\rvert.
\end{align}
Note that when $\mu=\nu$, the term in the first sum on the right side of \eqref{Fn} equals $\sum_\mu\log|\phi'(e^{\i\theta_\mu})|$.
Let $\E_n^\beta$ denote expectation with respect to the measure on $H_n= \{ 0\leq \theta_1 < \theta_2 < \cdots < \theta_n \leq 2\pi \}$ with density  
$$
\frac 1{Z_{n,\beta}}\exp\big(\frac{\beta}{2}F_n(\theta) + \big(1-\frac{\beta}{2}\big)\sum_\mu\log|\phi'(e^{\i \theta_\mu})|\big). 
$$
With this notation,
\begin{align}\label{eq1}
    &\frac{D_n^\beta[e^g]}{D_n^\beta[1]} = \E_n^\beta\Big[\exp \Big( \sum_\mu g\circ\phi(e^{\i\theta_\mu}) \Big)\Big] \nonumber \\
    &= \frac{1}{D_n^\beta[1]} \int_{H_n} \exp \Big( \frac{\beta}{2} F_n(\theta)+  \sum_\mu g\circ\phi(e^{\i\theta_\mu}) +(1-\tfrac{\beta}{2})\log|\phi'(e^{\i \theta_\mu})| \Big) \d^n \theta \nonumber \\
    &= \frac{1}{D_n^\beta[1]n!} \int_{[0,2\pi]^n} \exp \Big( \frac{\beta}{2} F_n(\theta)+  \sum_\mu g\circ\phi(e^{\i\theta_\mu}) +(1-\tfrac{\beta}{2})\log|\phi'(e^{\i \theta_\mu})| \Big) \d^n \theta.
\end{align}
To analyze the asymptotics of this expression we will make a change of variables in \eqref{eq1}: we replace $\theta_\mu$ by $\theta_\mu-\frac{1}{n}h\circ \phi(e^{\-i\theta_\mu})$ where $h$ 
is a function on $\gamma$ that has to be chosen appropriately. In the case when $\gamma$ is the unit circle and $\beta=2$ it was seen in \cite{Joh88} that the right choice is to let $h$ be the conjugate function of $g$ on the unit circle. 
The \textit{conjugate function} $\Tilde{f}$ of a function $f(e^{\i\theta}) =f(\theta)= \sum_{k=1}^\infty \alpha_k\cos k\theta+\beta_k\sin k\theta$ is defined as 
\begin{align}\label{Conjfcn}
\Tilde{f}(e^{\i\omega}) =\Tilde{f}(\omega)= \mathrm{p.v. }\int_0^{2\pi} \cot\big(\frac{\omega-\theta}{2}\big) f(\theta) \frac{\d\theta}{2\pi} 
=\sum_{k=1}^\infty -\beta_k\cos k\omega+\alpha_k\sin k\omega,
\end{align}
where p.v. denotes the principal value.
In  the case of a general Jordan curve we need a generalization of the conjugate function.

It turns out, as will be seen in the proof of Theorem \ref{mainthm} in the next section, see Remark \ref{rem:inteq}, that $h$ should be chosen as the real-valued solution to the integral equation
\begin{equation}\label{inteqn}
g(z)=\Re\frac{\beta}{2\pi}\mathrm{p.v.}\int_\gamma \frac{h(\zeta)}{\zeta-z}d\zeta
\end{equation}
for $z \in\gamma$. In \eqref{inteqn} we can introduce the parametrization $z=\phi(e^{\i\theta})$, $0\le \theta\le 2\pi$, of $\gamma$, which gives 
the following integral equation on $\T$,
\begin{equation}\label{inteq}
g\circ\phi(e^{\i \omega}) = \Re p.v. \frac{\beta}{2} \int_0^{2\pi} \frac{\i   e^{\i \theta}\phi'(e^{\i \theta})}{\phi(e^{\i \theta})-\phi(e^{\i \omega})} h\circ \phi(e^{\i\theta}) \frac{\d\theta}{\pi}.  
\end{equation}
Note that if $\gamma=\T$ and $\beta=2$, then \eqref{inteq} gives $g=-\Tilde{h}$, i.e.~ $h=\Tilde{g}$.

To simplify the notation we set $G(\theta)= g\circ\phi(e^{\i\theta})$ and $H(\theta)=h\circ\phi(e^{\i\theta})$. We have the Fourier expansions
\[
G(\theta)=\sum_{k\ge 1}a_k\cos k\theta+b_k\sin k\theta, \quad H(\theta)=\sum_{k\ge 1}c_k\cos k\theta+d_k\sin k\theta,
\]
(recall that $\frac{a_0}{2} = \int_0^{2\pi} G(\theta) \d\theta = 0 $) and in analogy with \eqref{gvector}, we write
\begin{equation}\label{hvector}
 \h = \frac{1}{2} \begin{pmatrix}
    (\sqrt{k}c_k)_{k\geq 1} \\ (\sqrt{k}d_k)_{k\geq 1}
\end{pmatrix},
\end{equation}
so that
\[
G( \theta) = 2\begin{pmatrix}
    \mathbf{x_\theta} \\ \mathbf{y_\theta}
\end{pmatrix}^t\g,\quad
H( \theta) = 2\begin{pmatrix}
    \mathbf{x_\theta} \\ \mathbf{y_\theta}
\end{pmatrix}^t\h.
\]
The equation \eqref{inteq} becomes
\begin{equation}\label{GH}
G(\omega)=\frac{\beta}{2\pi}\Re\int_0^{2\pi}\frac{\i e^{\i\theta}\phi'(e^{\i\theta})}{\phi(e^{\i\theta})-\phi(e^{\i\omega})}H(\theta)\,d\theta.
\end{equation}
Also, we define
\[
L = \begin{pmatrix}
    0 & -I \\ 
     I & 0
\end{pmatrix},
\quad
J = \begin{pmatrix}
    (k\delta_{kj})_{k,j\geq1} & (0)_{k,j\geq 1} \\ 
     (0)_{k,j\geq 1} & (k\delta_{kj})_{k,j\geq1}
\end{pmatrix}
\]
as operators on $\ell_2(\R) \oplus \ell_2(\R)$. Note that we have the formulas
\begin{equation}\label{xyorth}
\int_0^{2\pi} \begin{pmatrix}
    \mathbf{x_\theta} \\ \mathbf{y_\theta}
\end{pmatrix} \begin{pmatrix}
    \mathbf{x_\theta} \\ \mathbf{y_\theta}
\end{pmatrix}^t \frac{\d\theta}{\pi} = J^{-1},
\end{equation}
and 
\begin{equation}\label{Hprime}
H'(\theta) = -2\begin{pmatrix}
    \mathbf{x_\theta} \\ \mathbf{y_\theta}
\end{pmatrix}^t JL \mathbf{h}, \quad 
\Tilde{H}( \theta) = 2\begin{pmatrix}
    \mathbf{x_\theta} \\ \mathbf{y_\theta}
\end{pmatrix}^t L\h.
\end{equation}

Our assumption that $g$ and hence $G$ is $C^{4+\alpha}$ will be quantified via the norm
\[
\|G\|_{4,\alpha}=\sup_{0\le\theta_1,\theta_2\le 2\pi}\frac{|G^{(4)}(\theta_1)-G^{(4)}(\theta_2)|}{|e^{\i\theta_1}-e^{\i\theta_2}|^\alpha}+\sum_{j=0}^4\|G^{(j)}\|_\infty.
\]
 Note that if $g\in C^{4+\alpha}(\gamma)$ then $G(\theta)=g\circ \phi(e^{\i\theta})$ is $C^{4+\alpha}$, since $\phi$ is $C^{9+\alpha}$, so
$\|G\|_{4,\alpha}<\infty$.
The solution to the integral equation is now provided by the following lemma.
\begin{lemma}\label{lem:inteqnsol}
Assume that $g$ is $C^{4+\alpha}$, $\alpha>0$, on the $C^{9+\alpha}$ Jordan curve $\gamma$, and let
\begin{equation}\label{hgeqn}
\h=\frac 2{\beta}L(I+K)^{-1}\g.
\end{equation}
Then the function
\[
H( \theta) = 2\begin{pmatrix}
    \mathbf{x_\theta} \\ \mathbf{y_\theta}
\end{pmatrix}^t\h,
\]
satisfies
\begin{equation}\label{HLip}
\|H\|_{4,\alpha}\le C\big(A(1-\|K\|)^{-1}+1\big)\|G\|_{4,\alpha},
\end{equation}
where $A$ is the constant in \eqref{eq:coeffdecay}.  Furthermore,
$G$ and $H$ satisfy the integral equation \eqref{GH}.
\end{lemma}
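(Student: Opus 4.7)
My plan is to verify the integral equation \eqref{GH} by direct calculation using the Grunsky expansion, and then establish the norm bound by decomposing $\h$ into a conjugate-function piece and a Grunsky-smoothed remainder.

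For the integral equation, I would decompose the kernel as a logarithmic derivative,
\[
\frac{\i e^{\i\theta}\phi'(e^{\i\theta})}{\phi(e^{\i\theta})-\phi(e^{\i\omega})} = \frac{\i e^{\i\theta}}{e^{\i\theta}-e^{\i\omega}} + \frac{\partial}{\partial\theta}\log\frac{\phi(e^{\i\theta})-\phi(e^{\i\omega})}{e^{\i\theta}-e^{\i\omega}}.
\]
A short computation gives $\Re\frac{\i e^{\i\theta}}{e^{\i\theta}-e^{\i\omega}}=-\tfrac12\cot((\omega-\theta)/2)$, so by \eqref{Conjfcn} and \eqref{Hprime} the first summand contributes $-\tfrac{\beta}{2}\tilde H(\omega) = -\beta\begin{pmatrix}\mathbf{x_\omega}\\\mathbf{y_\omega}\end{pmatrix}^tL\h$ to the right-hand side of \eqref{GH}. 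The second summand is smooth on $\T\times\T$ (the quotient has the nonzero limit $\i e^{\i\omega}\phi'(e^{\i\omega})$ at $\theta=\omega$), so I integrate by parts in $\theta$; the boundary term vanishes by $2\pi$-periodicity. The Grunsky identity \eqref{KGrunskyrel} (with $\mathrm{cap}(\gamma)=1$) rewrites the real part of the log as $-\begin{pmatrix}\mathbf{x_\omega}\\\mathbf{y_\omega}\end{pmatrix}^tK\begin{pmatrix}\mathbf{x_\theta}\\\mathbf{y_\theta}\end{pmatrix}$; substituting \eqref{Hprime} for $H'$ and collapsing the $\theta$-integral via the orthogonality \eqref{xyorth} leaves $-\beta\begin{pmatrix}\mathbf{x_\omega}\\\mathbf{y_\omega}\end{pmatrix}^tKL\h$. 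Adding the two contributions gives
\[
G(\omega) = -\beta\begin{pmatrix}\mathbf{x_\omega}\\\mathbf{y_\omega}\end{pmatrix}^t(I+K)L\h,
\]
and matching Fourier coefficients against $G=2\begin{pmatrix}\mathbf{x_\omega}\\\mathbf{y_\omega}\end{pmatrix}^t\g$ together with the identity $L^2=-I$ (so $L^{-1}=-L$) yields $\h=\tfrac{2}{\beta}L(I+K)^{-1}\g$. Read in reverse, this calculation shows that the stated $\h$ indeed satisfies \eqref{GH}.

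For the norm bound I would use $(I+K)^{-1}\g=\g-K(I+K)^{-1}\g$ to write
\[
H(\theta) = \frac{2}{\beta}\tilde G(\theta) - \frac{4}{\beta}\begin{pmatrix}\mathbf{x_\theta}\\\mathbf{y_\theta}\end{pmatrix}^tLK(I+K)^{-1}\g.
\]
Privalov's theorem gives $\|\tilde G\|_{4,\alpha}\le C\|G\|_{4,\alpha}$, which accounts for the $+1$ in the prefactor. For the remainder, Lemma \ref{lemma:grunskyineq} gives $\|(I+K)^{-1}\g\|_2\le \|\g\|_2/(1-\|K\|)$, and $K$ is strongly smoothing thanks to \eqref{eq:coeffdecay}: choosing $p$, $q$ with $p+q=m-1=8$ (for instance $p=13/2$, $q=3/2$) yields $|K_{kl}|\le Ak^{-6-\alpha/2}l^{-1-\alpha/2}$, so Cauchy-Schwarz shows that the $k$-th component of $LK(I+K)^{-1}\g$ decays like $Ak^{-6-\alpha/2}\|\g\|_2/(1-\|K\|)$. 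Converting $l^2\oplus l^2$ coefficients to Fourier coefficients costs a factor $1/\sqrt{k}$, leaving $k^{-13/2-\alpha/2}$ decay, which remains summable after multiplication by the Sobolev weight $k^{4+\alpha}$; standard arguments then give a $C^{4+\alpha}$ bound on the remainder with norm at most $CA\|G\|_{4,\alpha}/(1-\|K\|)$. Combining both pieces yields \eqref{HLip}.

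The main obstacle is the derivative bookkeeping in the last step: the split $p+q=m-1=8$ and the specific value of $m=9$ in the regularity of $\gamma$ have to be tight enough that, after the unavoidable losses (one derivative via Cauchy-Schwarz and a further half from the $1/\sqrt{k}$ conversion), the leftover decay still comfortably controls the $4+\alpha$ derivatives required by the $\|\cdot\|_{4,\alpha}$ norm. Steps 1 and 2 reduce to a clean application of the identities \eqref{KGrunskyrel}, \eqref{Hprime}, and \eqref{xyorth}.
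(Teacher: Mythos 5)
Your proof is correct and follows essentially the same route as the paper's: you verify \eqref{GH} by splitting the kernel into a circular Hilbert-transform part (giving $-\tfrac{\beta}{2}\tilde H$) and a smooth Grunsky part (which, after integration by parts, \eqref{KGrunskyrel}, \eqref{Hprime}, and \eqref{xyorth}, collapses to $-\beta\,\mathbf{x}_\omega^tKL\h$), and you establish \eqref{HLip} via the decomposition $\h=\tfrac{2}{\beta}L\g-\tfrac{2}{\beta}LK(I+K)^{-1}\g$ with Privalov for the first term and the Grunsky decay \eqref{eq:coeffdecay} for the second. The only incidental difference is that your estimate of the remainder term goes through Cauchy--Schwarz and the operator-norm bound $\|(I+K)^{-1}\|\le(1-\|K\|)^{-1}$ rather than the paper's entrywise bound, which is a marginally cleaner way to reach the same $k^{-6-\alpha/2}$ decay of the $\ell^2$-components.
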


\begin{proof}
We can write \eqref{hgeqn} as
\[
\h=\frac 2{\beta}L\g-\frac 2{\beta}LK(I+K)^{-1}\g,
\]
so
\begin{equation}\label{HGeqn}
H(\theta)=\frac 2{\beta}\Tilde{G}(\theta)-\frac 4{\beta}\begin{pmatrix}
    \mathbf{x_\theta} \\ \mathbf{y_\theta}
\end{pmatrix}^tLK(I+K)^{-1}\g.
\end{equation}
It follows from Privalov's theorem that $\|\tilde{G}\|_{4,\alpha}\le C\|G\|_{4,\alpha}$. Also, if we write
\[
\begin{pmatrix} (\xi_k)_{k\ge 1} \\ (\eta_k)_{k\ge 1} \end{pmatrix} =LK(I+K)^{-1}\g,
\]
then using \eqref{eq:coeffdecay}, and the inequalities $((I+K)^{-1})_{jk}\le (1-\|K\|)^{-1}$ and $|a_l|,|b_l|\le C\|G\|_{4,\alpha}l^{-4-\alpha}$, we see that
\[
|\xi_k|+|\eta_k|\le CA(1-\|K\|)^{-1}\|G\|_{4,\alpha}k^{-6-\epsilon}.
\]
Hence, the $\|\cdot\|_{4,\alpha}$-norm of the function
\[
\frac 4{\beta}\begin{pmatrix}
    \mathbf{x_\theta} \\ \mathbf{y_\theta}
\end{pmatrix}^tLK(I+K)^{-1}\g
\]
is bounded by $CA(1-\|K\|)^{-1}\|G\|_{4,\alpha}$, and we have proved the estimate \eqref{HLip}. We see that \eqref{GH} can be written
\begin{align*}
G(\omega) &= \frac{\beta}{2} \Re \mathrm{p.v.} \int_0^{2\pi} \frac{\partial}{\partial \theta}\log \big( \phi(e^{\i\theta})-\phi(e^{\i \omega}) \big) H(\theta) \frac{\d\theta}{\pi} \\
&=\frac{\beta}{2} \Re \mathrm{p.v.} \int_0^{2\pi} \frac{\partial}{\partial \theta}\bigg(\log \frac {\phi(e^{\i\theta})-\phi(e^{\i \omega})}{e^{\i\theta}-e^{\i\omega}}+\log(e^{\i\theta}-e^{\i\omega})\bigg)
H(\theta) \frac{\d\theta}{\pi} \\
&=\frac{\beta}{2} \Re \mathrm{p.v.} \int_0^{2\pi} \frac{\i e^{\i\theta}}{e^{\i\theta}-e^{\i\omega}}H(\theta)\frac{\d\theta}{\pi}-\frac{\beta}{2}\int_0^{2\pi}\log\left|
\frac {\phi(e^{\i\theta})-\phi(e^{\i \omega})}{e^{\i\theta}-e^{\i\omega}}\right|H'(\theta)\frac{\d\theta}{\pi}\\
&=-\frac{\beta}{4\pi} \mathrm{p.v.} \int_0^{2\pi} \cot\frac{\omega-\theta}2 H(\theta)\,d\theta-\frac{\beta}{2}\int_0^{2\pi}\log\left|
\frac {\phi(e^{\i\theta})-\phi(e^{\i \omega})}{e^{\i\theta}-e^{\i\omega}}\right|H'(\theta)\frac{\d\theta}{\pi}.
\end{align*}
If we use \eqref{KGrunskyrel}, \eqref{xyorth}  and \eqref{Hprime}, we obtain
\begin{equation}\label{GH2}
G(\omega)=-\frac{\beta}2\Tilde{H}(\omega)-\frac{\beta}{\pi}\int_0^{2\pi}\begin{pmatrix} \mathbf{x_\omega} \\ \mathbf{y_\omega}
\end{pmatrix}^tK\begin{pmatrix} \mathbf{x_\theta} \\ \mathbf{y_\theta}
\end{pmatrix}\begin{pmatrix}\mathbf{x_\theta} \\ \mathbf{y_\theta}
\end{pmatrix}^tJL\h\,d\theta=-\frac{\beta}2\Tilde{H}(\omega)-\beta\begin{pmatrix} \mathbf{x_\omega} \\ \mathbf{y_\omega}
\end{pmatrix}^tKL\h.
\end{equation}
We see that \eqref{GH2} can be written as
\begin{equation}\label{inteq2}
    -\mathbf{g} = \frac{\beta}{2}(I+K)L\mathbf{h}.
\end{equation}
in Fourier form. If $\h$ is defined by \eqref{hgeqn} then \eqref{inteq2} holds. Working backwards in the argument above, we see that $G$ and $H$ satisfy \eqref{GH}.
\end{proof}

\begin{remark}
Since we are only working with very regular functions and curves in this paper we will not discuss the integral equation \eqref{inteqn} under weaker regularity conditions. Note that 
the equation \eqref{inteq2} on the Fourier side is meaningful if $G$ and $H$ are $H^{1/2}$ functions on $\T$. See Section 2 in \cite{JoVi}.
\end{remark}

\begin{remark}
If we look at \eqref{HGeqn}, we see that $H$ is the conjugate function of $\frac 2{\beta}(G-V)$, where
\[
V(\theta)=2\begin{pmatrix}
    \mathbf{x_\theta} \\ \mathbf{y_\theta}
\end{pmatrix}^tK(I+K)^{-1}\g.
\]
If we write
\[
V(\theta)=\sum_{k=1}^\infty(\bar{v}_ke^{\i k\theta}+v_ke^{-\i k\theta}),\quad G(\theta)=\sum_{k=-\infty}^\infty g_k e^{\i k\theta}
\]
in complex Fourier form, a computation shows that \eqref{inteq2} is equivalent to the system of equations
\[
\sum_{k=1}^\infty ka_{k\ell}\bar{v}_k+v_\ell=\sum_{k=1}^\infty ka_{k\ell}g_k,
\]
which was used in \cite{Joh88}. This type of system of equations goes back to \cite{Pom69} and the study of the location of the Fekete points on $\gamma$.
\end{remark}

In the proof of Theorem \ref{mainthm} we will need the formula in the next lemma.

\begin{lemma}\label{lem:gvar} The following identity holds
\begin{align}\label{eq:I+K}
    &\frac{1}{4\pi} \int_0^{2\pi} G(\theta)H'(\theta) \d \theta + 2(1-\tfrac{\beta}{2})\frac{1}{4\pi} \int_0^{2\pi} \log |\phi'(e^{\i \theta})|H'(\theta) \d \theta \nonumber \\
&= \frac{2}{\beta}  \g^t (I+K)^{-1}\g+2\big(1-\frac{2}{\beta}\big) \dd^t(I+K)^{-1}\g.
\end{align}
\end{lemma}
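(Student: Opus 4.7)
The plan is a direct computation using the Fourier-side formulas that have already been set up. Everything reduces to the orthogonality relation \eqref{xyorth}, the identity $L^2=-I$ (immediate from the block form of $L$), and the explicit expression $\h=\tfrac 2\beta L(I+K)^{-1}\g$ from Lemma \ref{lem:inteqnsol}. There is no real obstacle — it is bookkeeping — but the key trick is to recognize that each of the two integrals becomes a quadratic form in $\g$ and $\h$ after one applies \eqref{xyorth}.

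First I would evaluate the first integral. Substituting the representations $G(\theta)=2(\mathbf{x_\theta},\mathbf{y_\theta})^t\g$ and $H'(\theta)=-2(\mathbf{x_\theta},\mathbf{y_\theta})^t JL\h$ from \eqref{ggvec} and \eqref{Hprime}, and noting that $(\mathbf{x_\theta},\mathbf{y_\theta})^t\g$ is a scalar so it equals $\g^t(\mathbf{x_\theta},\mathbf{y_\theta})$, I get
\[
\frac{1}{4\pi}\int_0^{2\pi} G(\theta)H'(\theta)\,\d\theta
=-\g^t\!\left(\int_0^{2\pi}\begin{pmatrix}\mathbf{x_\theta}\\\mathbf{y_\theta}\end{pmatrix}\begin{pmatrix}\mathbf{x_\theta}\\\mathbf{y_\theta}\end{pmatrix}^t\frac{\d\theta}{\pi}\right)JL\h
=-\g^t J^{-1}JL\h=-\g^t L\h,
\]
by \eqref{xyorth}. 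The exact same computation with \eqref{logphiprime} in place of \eqref{ggvec} gives
\[
\frac{1}{4\pi}\int_0^{2\pi}\log|\phi'(e^{\i\theta})|H'(\theta)\,\d\theta=\dd^t L\h.
\]

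Next I would plug in $\h=\tfrac 2\beta L(I+K)^{-1}\g$. Since $L^2=-I$, this gives $L\h=-\tfrac 2\beta (I+K)^{-1}\g$, so
\[
-\g^t L\h=\frac{2}{\beta}\g^t(I+K)^{-1}\g,\qquad \dd^t L\h=-\frac{2}{\beta}\dd^t(I+K)^{-1}\g.
\]
Multiplying the second expression by $2(1-\beta/2)$ produces the coefficient
\[
-\frac{4}{\beta}\Bigl(1-\frac{\beta}{2}\Bigr)=2-\frac{4}{\beta}=2\Bigl(1-\frac{2}{\beta}\Bigr),
\]
which is exactly the coefficient of $\dd^t(I+K)^{-1}\g$ on the right side of \eqref{eq:I+K}. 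Adding the two contributions yields \eqref{eq:I+K}, completing the proof.
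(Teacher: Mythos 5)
Your proof is correct and follows the same route as the paper: use the Fourier representations \eqref{ggvec}, \eqref{logphiprime}, \eqref{Hprime}, the orthogonality relation \eqref{xyorth}, and $\h=\tfrac{2}{\beta}L(I+K)^{-1}\g$ with $L^2=-I$. The only cosmetic difference is the order of operations — the paper first rewrites $H'(\theta)=\tfrac{4}{\beta}(\mathbf{x_\theta},\mathbf{y_\theta})^tJ(I+K)^{-1}\g$ and then integrates, whereas you first integrate to get $-\g^tL\h$ and $\dd^tL\h$ and then substitute for $\h$ — but the computation is the same.
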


\begin{proof}
It follows from \eqref{Hprime} and \eqref{hgeqn} that
\[
H'(\theta)=-2\begin{pmatrix}
    \mathbf{x_\theta} \\ \mathbf{y_\theta}
\end{pmatrix}^tJL\h=\frac 4{\beta}\begin{pmatrix}
    \mathbf{x_\theta} \\ \mathbf{y_\theta}
\end{pmatrix}^tJ(I+K)^{-1}\g,
\]
since $L^2=-I$. Thus, using \eqref{logphiprime} and \eqref{xyorth}, we obtain
\begin{align*}
&\frac{1}{4\pi} \int_0^{2\pi} G(\theta)H'(\theta) \d \theta + 2(1-\tfrac{\beta}{2})\frac{1}{4\pi} \int_0^{2\pi} \log |\phi'(e^{\i \theta})|H'(\theta) \d \theta\\
&=\frac{2}{\beta} \int_0^{2\pi} \Big( \g^t -2\big(1-\frac{\beta}{2}\big)\dd^t\Big)\begin{pmatrix}
    \mathbf{x_\theta} \\ \mathbf{y_\theta}
\end{pmatrix} \begin{pmatrix}
    \mathbf{x_\theta} \\ \mathbf{y_\theta}
\end{pmatrix}^t J(I+K)^{-1} \mathbf{g} \frac{\d\theta}{\pi}\\
&= \frac{2}{\beta}  \g^t (I+K)^{-1}\g+2\big(1-\frac{2}{\beta}\big) \dd^t(I+K)^{-1}\g.
\end{align*}

\end{proof}

Recall \eqref{gbetavec} and let
\begin{equation}\label{hbetavec}
\h_\beta=\frac 2{\beta}L(I+K)^{-1}\g_\beta.
\end{equation}
Define $h_\beta$ on $\gamma$ by
\begin{equation}\label{hbeta}
h_{\beta}\circ\phi(e^{\i\theta})=2\begin{pmatrix}
    \mathbf{x_\theta} \\ \mathbf{y_\theta}
\end{pmatrix}^t\h_\beta.
\end{equation}
It follows from Lemma \ref{lem:inteqnsol} that $g_\beta$ and $h_\beta$ satisfy
\begin{equation}\label{betainteqn}
g_\beta(z)=\Re\frac{\beta}{2\pi}\mathrm{p.v.}\int_\gamma \frac{h_\beta(\zeta)}{\zeta-z}d\zeta,
\end{equation}
for $z\in\gamma$. The integral equations \eqref{inteqn} and \eqref{betainteqn} can be solved without moving the problem to the unit circle and using the Grunsky operator. Let $\nu=\nu(z)$,
$z\in\gamma$, denote the exterior normal to $\gamma$. For $w\in\C\setminus\gamma=\Omega_+\cup\Omega_-$, the \emph{single-layer potential} with real-valued density $f$ on $\gamma$ is defined by
\[
S(f)(w)=\frac 1{2\pi}\int_\gamma\log|\zeta-w|^{-1}f(\zeta)\,|d\zeta|.
\]
This is a harmonic function in $\Omega_+\cup\Omega_-$. The single-layer potential is continuous across $\gamma$ but its normal derivatives have a jump. 
Let $\frac{\partial S_\pm(f)}{\partial\nu}(z)$, $z\in\gamma$, denote the derivatives in the direction $\pm \nu$. Then,
\begin{equation}\label{Plemelj}
\frac{\partial S_\pm(f)}{\partial\nu}(z)=\frac 1{2\pi}\int_\gamma\frac{\partial }{\partial\nu(z)}\log|\zeta-z|^{-1}f(\zeta)\,|d\zeta|\mp\frac 12 f(z),
\end{equation}
see e.g.~ \cite{Kress}.
Let $\frac{\partial}{\partial s}$ denote the tangential derivative along the curve $\gamma$. If $\zeta=\zeta(t)$, $a\le t\le b$, is some parametrization of $\gamma$, then
\begin{align*}
&\Re\frac{\beta}{2\pi}\mathrm{p.v.}\int_\gamma\frac{h(\zeta)}{\zeta-z}d\zeta=\frac{\beta}{2\pi}\mathrm{p.v.}\int_a^b\Big(\Re\frac{\zeta'(t)}{\zeta(t)-z}\Big)h(\zeta(t))\,dt\\
&=\frac{\beta}{2\pi}\mathrm{p.v.}\int_a^b\frac{\partial}{\partial t}\log|\zeta(t)-z|h(\zeta(t))\,dt=\frac{\beta}{2\pi}\int_a^b\log|\zeta(t)-z|^{-1}\frac{\partial}{\partial t}h(\zeta(t))\,dt\\
&=\frac{\beta}{2\pi}\int_\gamma\log|\zeta-z|^{-1}\frac{\partial h}{\partial s}(\zeta)\,|d\zeta|=\beta S\big(\frac{\partial h}{\partial s}\big)(z),
\end{align*}
where we used $\frac{\partial h}{\partial s}(\zeta(t))=|\zeta'(t)|^{-1}\frac{\partial}{\partial t}h(\zeta(t))$.
Since the single-layer potential is continuous across $\gamma$, we see that if $g$ and $h$ satisfy \eqref{inteqn}, then
\[
g_\pm(w):=\beta S\big(\frac{\partial h}{\partial s}\big)(w),\quad w\in\Omega_\pm,
\]
are harmonic extensions of $g$ to $\Omega_\pm$. It follows from \eqref{Plemelj} that
\begin{equation}\label{dhs}
\frac{\partial h}{\partial s}(z)=-\frac 1{\beta}\Big(\frac{\partial g_+}{\partial \nu}(z)-\frac{\partial g_-}{\partial \nu}(z)\Big).
\end{equation}
Let $\Tilde{g}_\pm$ be the conjugate harmonic function to $g_\pm$ in $\Omega_\pm$. Then, by the Cauchy-Riemann equations
\begin{equation*}
\frac{\partial h}{\partial s}(z)=-\frac 1{\beta}\Big(\frac{\partial \Tilde{g}_+}{\partial s}(z)-\frac{\partial \Tilde{g}_-}{\partial s}(z)\Big), 
\end{equation*}
and integrating this gives
\[
h(z)=-\frac 1{\beta}(\Tilde{g}_+(z)-\Tilde{g}_-(z)).
\]
This gives another solution formula for the integral equation \eqref{inteqn}.

We are now in position to prove Proposition \ref{prop:diffvar}.
\begin {proof}[ Proof of Proposition \ref{prop:diffvar}]
We see from \eqref{hbetavec}, \eqref{Hprime}, \eqref{xyorth}, \eqref{hbeta} and \eqref{dhs} that
\begin{align}\label{Neu}
\g^t(I+K)^{-1}\g&= -\frac{\beta}{2} \g^t L\h= -\frac {\beta}{2\pi}\int_0^{2\pi}\g^t \begin{pmatrix}
    \mathbf{x_\theta} \\ \mathbf{y_\theta}\end{pmatrix} \begin{pmatrix}
    \mathbf{x_\theta} \\ \mathbf{y_\theta}\end{pmatrix}^tJL\h_\beta\,d\theta\notag\\
 &=-\frac \beta{8\pi}\int_0^{2\pi}g(\phi(e^{\i\theta}))\frac{\partial}{\partial\theta}h(\phi(e^{\i\theta}))\,d\theta=
 -\frac \beta{8\pi}\int_\gamma g(\zeta)\frac{\partial h}{\partial s}(\zeta)\,|d\zeta|\notag\\
 &= \frac 1{8\pi}\int_\gamma g(\zeta)\Big(\frac{\partial g_{+}}{\partial \nu}(\zeta)-\frac{\partial g_{-}}{\partial \nu}(\zeta)\Big)\,|d\zeta|.
 \end{align}
 Since $g_{\pm}$ are harmonic in $\Omega_{\pm}$ it follows from the first Green's theorem that this equals the right side of \eqref{Dirichlet}. It can be checked that if $G$ is $C^{1+\alpha}$ and $\gamma$ is $C^{4+\alpha}$ then Lemma \ref{lem:inteqnsol} and its proof still holds and gives an $H$ that is $C^{1+\alpha}$.

\end{proof}

The \textit{Neumann-Poincar\'e operator} $\mathcal{K}_{\mathrm{NP}}$ on a $C^{2+\alpha}$ curve $\gamma$ is defined by
\[
\mathcal{K}_{\mathrm{NP}}(g)(z)=\frac1{\pi} p.v. \int_\gamma g(\zeta)\frac{\partial}{\partial \nu(\zeta)}\log|z-\zeta|\,|d\zeta|, \quad z \in \gamma
\]
where $g$ is a real-valued function on $\gamma$. It is also called the double-layer potential operator. The eigenvalues of $\mathcal{K}_{\mathrm{NP}}$ acting on the Sobolev space $H^{1/2}$ are called the Fredholm eigenvalues of $\gamma$ and have been much studied, see e.g.~ \cite{DPS}, \cite{Sch} and references therein. By taking the real and imaginary parts of the top-right equation in (70) in \cite{Sch}, we see that the eigenvalues of the operator $K$ in \eqref{eq:K} are exactly the Fredholm eigenvalues (note that in \cite{Sch}, the author defines the Fredholm eigenvalues to be their reciprocals). Hence $\det(I+K)=\det(I+\mathcal{K}_{\mathrm{NP}})$. Moreover, the eigenvalues of $K$ correspond to the singular values of the Grunsky operator $B$ and their negatives (see Lemma 2.1 in \cite{Joh22}), so $\det(I-BB^*)=\det(I-|B|)\det(I+|B|)=\det(I+K)= \det(I+\mathcal{K}_{\mathrm{NP}})$. The next proposition gives a more direct relation between $K$ and 
$\mathcal{K}_{\mathrm{NP}}$.

\begin{prop}\label{prop:NP}
Assume that $\gamma$ is $C^{4+\alpha}$. Let $g$ be a real-valued $C^{1+\alpha}$ function on $\gamma$ with mean zero with respect to the equilibrium measure on $\gamma$
and Fourier expansion
\[
g\circ\phi(e^{\i\theta})=2\begin{pmatrix}
    \mathbf{x_\theta} \\ \mathbf{y_\theta}\end{pmatrix}^t\g.
\]
Then,
\[
\mathcal{K}_{\mathrm{NP}}(g)\circ\phi(e^{\i\theta})=2\begin{pmatrix}
    \mathbf{x_\theta} \\ \mathbf{y_\theta}\end{pmatrix}^tK\g.
\]
\end{prop}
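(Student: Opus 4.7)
The plan is to express $\mathcal{K}_{\mathrm{NP}}(g)\circ\phi(e^{\i\omega})$ as an integral over $[0,2\pi]$, expand the kernel using the Grunsky coefficients, and then match Fourier coefficients with the right-hand side of the claimed identity.

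First I would parametrize $\zeta=\phi(e^{\i\theta})$, noting that the outward unit normal to $\gamma$ at $\zeta$ is $\nu(\zeta)=e^{\i\theta}\phi'(e^{\i\theta})/|\phi'(e^{\i\theta})|$ and $|d\zeta|=|\phi'(e^{\i\theta})|\,d\theta$. Using the standard identity $\partial_\nu\Re F=\Re(\nu F')$ for $F$ holomorphic in $\zeta$, applied to $F(\zeta)=\log(z-\zeta)$ with $z=\phi(e^{\i\omega})$, I obtain
\[
\partial_{\nu(\zeta)}\log|z-\zeta|\cdot|d\zeta|=-\Re\frac{e^{\i\theta}\phi'(e^{\i\theta})}{\phi(e^{\i\omega})-\phi(e^{\i\theta})}\,d\theta.
\]

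Next I would differentiate the Grunsky identity \eqref{Grunsky} (recall $\mathrm{cap}(\gamma)=1$) with respect to $w$, which yields
\[
\frac{\phi'(w)}{\phi(z)-\phi(w)}=\frac{1}{z-w}-\sum_{k,l\geq1}la_{kl}z^{-k}w^{-l-1}.
\]
Specializing to $z=e^{\i\omega}$, $w=e^{\i\theta}$, multiplying by $e^{\i\theta}$, and using the elementary identity $\Re[e^{\i\theta}/(e^{\i\omega}-e^{\i\theta})]=-1/2$, I find
\[
-\Re\frac{e^{\i\theta}\phi'(e^{\i\theta})}{\phi(e^{\i\omega})-\phi(e^{\i\theta})}=\frac{1}{2}+\sum_{k,l\geq1}l\bigl[\Re(a_{kl})\cos(k\omega+l\theta)+\Im(a_{kl})\sin(k\omega+l\theta)\bigr].
\]
The passage to the boundary and the term-by-term interchanges below are justified by the coefficient decay \eqref{eq:coeffdecay}: for $\gamma\in C^{4+\alpha}$ we have $p+q=3$, so with $p=1$, $q=2$ we get $\sum_{k,l}l|a_{kl}|<\infty$.

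Finally, I would substitute into $\mathcal{K}_{\mathrm{NP}}(g)(\phi(e^{\i\omega}))=\pi^{-1}\int_0^{2\pi}G(\theta)\cdot(\cdot)\,d\theta$, where $G(\theta)=g\circ\phi(e^{\i\theta})=\sum_{l\geq 1}(a_l\cos l\theta+b_l\sin l\theta)$. The constant $1/2$ contribution vanishes by the mean-zero hypothesis on $g$. For the double series, Fourier orthogonality of $\{\cos l\theta,\sin l\theta\}$ collapses the $\theta$-integral and produces
\[
\mathcal{K}_{\mathrm{NP}}(g)\circ\phi(e^{\i\omega})=\sum_{k,l\geq1}l\bigl[a_l(\Re a_{kl}\cos k\omega+\Im a_{kl}\sin k\omega)+b_l(\Im a_{kl}\cos k\omega-\Re a_{kl}\sin k\omega)\bigr].
\]
A direct calculation expanding $2\begin{pmatrix}\mathbf{x_\omega}\\\mathbf{y_\omega}\end{pmatrix}^tK\g$ using the block form of $K$ in \eqref{eq:K}, the Grunsky operator entries $B_{kl}=\sqrt{kl}a_{kl}$, and the Fourier vector $\g=\tfrac12\bigl((\sqrt{l}a_l),(\sqrt{l}b_l)\bigr)^t$ produces exactly the same expression, completing the proof. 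There is no genuine obstacle beyond bookkeeping; the only point requiring care is the absolute convergence of the differentiated Grunsky series at the boundary, which is ensured by \eqref{eq:coeffdecay} under the stated $C^{4+\alpha}$ regularity of $\gamma$.
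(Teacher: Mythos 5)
Your proof is correct and follows essentially the same route as the paper's: parametrize $\mathcal{K}_{\mathrm{NP}}$ via $\phi$, differentiate the Grunsky identity \eqref{Grunsky} to replace the kernel by $-\frac{1}{2}\cot$-type singularity plus an absolutely convergent double series, use the mean-zero hypothesis to discard the constant part, and integrate term by term against $G(\theta)$ to match the Fourier block form of $K\g$. The only cosmetic differences are that you differentiate Grunsky in $w$ and carry real trigonometric coefficients, whereas the paper differentiates in $z$ and works with complex exponentials; these are interchangeable by the symmetry $a_{kl}=a_{lk}$.
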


\begin{proof}
Let $s(\zeta)$ be the unit tangent vector at $\zeta\in\gamma$ as a complex number. Then
\[
\frac{\partial}{\partial\nu(\zeta)}\log|z-\zeta|=\Re\Big(\frac{-\i s(\zeta)}{\zeta-z}\Big),
\]
and consequently
\[
\mathcal{K}_{\mathrm{NP}}(g)(z)=\Re\Big(\frac1{\pi \i}\mathrm{p.v.}\int_\gamma \frac{s(\zeta)}{\zeta-z}g(\zeta)\,|d\zeta|\Big).
\]
We see that
\begin{align*}
\mathcal{K}_{\mathrm{NP}}(g)\circ\phi(e^{\i\omega})&=\Re\Big(\frac1{\pi \i}\mathrm{p.v.}\int_0^{2\pi}\frac{\i\phi'(e^{\i\theta})}{\phi(e^{\i\theta})-\phi(e^{\i\omega})}
g(\phi(e^{\i\theta}))\,d\theta\Big)\\
&=\Re\Big(\frac1{\pi \i}\int_0^{2\pi}\Big(\frac{\i\phi'(e^{\i\theta})}{\phi(e^{\i\theta})-\phi(e^{\i\omega})}
-\frac{\i e^{\i\theta}}{e^{\i\theta}-e^{\i\omega}}\Big)g(\phi(e^{\i\theta}))\,d\theta\Big)
\end{align*}
since
\[
\Re\Big(\frac{e^{\i\theta}}{e^{\i\theta}-e^{\i\omega}}\Big)=\frac 12,
\]
and $g(\phi(e^{\i\theta}))$ has mean zero on the unit circle.
It follows by differentiation of \eqref{Grunsky} that
\[
\frac{\i\phi'(e^{\i\theta})}{\phi(e^{\i\theta})-\phi(e^{\i\omega})}
-\frac{\i e^{\i\theta}}{e^{\i\theta}-e^{\i\omega}}=\sum_{k,\ell\ge 1}\i ka_{k\ell}e^{-\i k\theta-\i\ell\omega}.
\]
Recall that $K$ is defined by \eqref{eq:K} where $B=B^{(1)}+\i B^{(2)}$ and that $g\circ \phi$ is given in \eqref{ggvec}. Thus,
\begin{align*}
\mathcal{K}_{\mathrm{NP}}(g)\circ\phi(e^{\i\omega})&=\sum_{k,\ell\ge 1}k\Re\big[a_{k\ell}(a_k-\i b_k)(\cos\ell\omega-\i\sin\ell\omega)\big]\\
&=2\sum_{k,\ell\ge 1}\Re\big[(b_{k\ell}^{(1)}+\i b_{k\ell}^{(2)})(\frac 12\sqrt{k}a_k-\i \frac 12\sqrt{k}b_k)(\frac{\cos\ell\omega}{\sqrt{\ell}}-\i\frac{\sin\ell\omega}{\sqrt{\ell}})\big]=
2\begin{pmatrix}
    \mathbf{x_\omega} \\ \mathbf{y_\omega}\end{pmatrix}^tK\g
\end{align*}
\end{proof}
The regularity conditions on $\gamma$ and $g$ are such that the above computation works without difficulties. They can be weakened but we will not enter into this here.

\begin{remark} \label{rem:NJO}
The operator
\[
\mathcal{N}g(z)=\frac{\partial g_+}{\partial \nu}(z)-\frac{\partial g_-}{\partial \nu}(z),
\]
where $z\in\gamma$, is called the \textit{Neumann jump operator}. From \eqref{Neu} we see that
\[
\frac 2{\beta}\g_\beta^t(I+K)^{-1}\g_\beta=\frac 1{¢\pi\beta}\int_\gamma g_\beta\mathcal{N}g_\beta\, ds,
\]
where we integrate with respect to the arclength measure. This formula agrees with formula (4.23) in \cite{WieZab22}, (note that their $\beta$ is our $\beta/2$). If we also use
formula (D16) in \cite{WieZab22}, and asymptotics of the partition function $Z_{n,\beta}(\T)$ for the unit circle, \eqref{Selberg}, we see that the asymptotic formula in \cite{WieZab22}
agrees with that in Theorem \ref{mainthm2}.

\end{remark}

\section{A relative Szeg\H{o} theorem}\label{sec:relSz}
In the rest of the paper $C$ will denote a constant that is independent of the curve $\gamma$ and the function $g$ defined on the curve. It can depend on $\beta$ but is always independent 
of $n$. If the constant depends on say $\gamma$ this will be indicated by $C(\gamma)$, and if it also depends on some norm $\|G\|$ of $G=g\circ\phi$ we will denote the constant by $C(\gamma,\|G\|)$. The precise value of these constants can change between formulas.

\subsection{Deforming the curve $\gamma$}
In the proof of the asymptotic formula for the partition function $Z_{n,\beta}(\gamma)$, Theorem \ref{mainthm2}, we will use a deformation $\gamma_s$ of the curve $\gamma$. Let 
$\phi_s(z)=s\phi(z/s)$, where $\phi$ is the exterior mapping function, and define $\gamma_s=\phi_s(\T)$ for $s\in[0,1]$ Then $\phi_0(z) = z$, $\mathrm{cap}(\gamma_s)=\mathrm{cap}(\gamma)=1$, and
by \eqref{Grunsky}
\begin{equation}\label{Grunskys}
\log\frac{\phi_s(z)-\phi_s(w)}{z-w}=\log\frac{\phi(z/s)-\phi(w/s)}{z/s-w/s}=-\sum_{k,l\ge 1} s^{k+l}a_{kl}z^{-k}w^{-l}.
\end{equation}
Set $a_{kl}(s)=s^{l+l}a_{kl}$, $b_{kl}(s)=\sqrt{kl}a_{kl}(s)$, and write $B(s):=(b_{kl}(s))_{k,l\ge 1}=B^{(1)}(s)+\i B^{(2)}(s)$ for the Grunsky operator for $\gamma_s$. Note that $|a_{kl}(s)|\le |a_{kl}|$ for $s\in[0,1]$.
As in \eqref{eq:K} and \eqref{eq:d}, we define
\[ K(s) = \begin{pmatrix}
B^{(1)}(s) & B^{(2)}(s) \\
B^{(2)}(s) & -B^{(1)}(s)
\end{pmatrix}, \quad \dd(s) = \frac{1}{2}\begin{pmatrix}
( \sqrt{k}s^k \sum_{j=1}^{k-1} \Re a_{j,k-j} )_{k\geq1} \\
( \sqrt{k}s^k \sum_{j=1}^{k-1} \Im a_{j,k-j} )_{k\geq1}
\end{pmatrix}. \]
The strengthened Grunsky inequality gives
\begin{align*}
\sum_{k\ge 1}|b_{kl}(s)w_l|^2&=\sum_{k\ge 1}s^{2k}\bigg|\sum_{l\ge 1}b_{kl}s^lw_l\bigg|^2\le s^2\sum_{k\ge 1}\bigg|\sum_{l\ge 1}b_{kl}s^lw_l\bigg|^2\\
&\le s^2\kappa^2\sum_{k\ge 1}s^{2k}|w_k|^2\le\kappa^2s^4\sum_{k\ge 1}|w_k|^2,
\end{align*}
for any complex sequence $(w_k)_{k\ge 1}$ in $\ell^2(\N)$, where $\kappa<1$. Thus, $\|B(s)\|\le \kappa s^2$ and hence $\|K(s)\|\le \kappa s^2\le \kappa$, for all $s\in [0,1]$.

Write
\begin{align}\label{Fns}
    F_n^s(\theta) &= \sum_{1\leq \mu,\nu \leq n} \log \big\lvert \frac {\phi_s(e^{\i \theta_{\mu}})-\phi_s(e^{\i \theta_{\nu}})}{e^{\i \theta_{\mu}}-e^{\i \theta_{\nu}}} \big\rvert + \sum_{1\leq \mu\neq\nu \leq n} \log\lvert e^{\i \theta_{\mu}}-e^{\i \theta_{\nu}}\rvert,
\end{align}
and let $\E_n^{\beta,s}$ denote expectation with respect to the measure on $H_n= \{ 0\leq \theta_1 < \theta_2 < \cdots < \theta_n \leq 2\pi \}$ with density proportional to
\[
\exp\Bigg(\frac{\beta}2 F_n^s(\theta)+\Big(1-\frac{\beta}2\Big)\sum_\mu\log|\phi_s'(e^{\i\theta_\mu})|\Bigg),
\]
so that $E_n^\beta=\E_n^{\beta,1}$, and $F_n(\theta)=F_n^1(\theta)$. Also, we let
\begin{equation}\label{Dns}
D_n^{\beta,s}[e^{g_s}]=\frac 1{n!}\int_{[0,2\pi]^n}\exp\Bigg[\frac{\beta}2 F_n^s(\theta)+\Big(1-\frac{\beta}2\Big)\sum_\mu\log|\phi_s'(e^{\i\theta_\mu})|+\sum_\mu G_s(\theta_\mu)\Bigg]
d^n\theta,
\end{equation}
where $G_s(\theta)=g_s\circ\phi_s(e^{\i\theta})$ and $g_s$ is a given real-valued function on $\gamma_s$. Let $\g(s)$ be the vector associated with $g_s$ as in \eqref{gvector}.

Theorem \ref{mainthm} follows from the following stronger version, which we will need in 
the proof of the asymptotics of $Z_{n,\beta}(\gamma)$.
\begin{thm}\label{smainthm}
Assume that $\gamma$ is a $C^{9+\alpha}$ Jordan curve, $g_s\in C^{4+\alpha}(\gamma_s)$, for some $\alpha>0$, and $\int_{\gamma_s}g_s\,d\nu_{\mathrm{eq}}^s=0$,
where $\nu_{\mathrm{eq}}^s $ is the equilibrium measure on $\gamma_s$. 
There is a constant $C(\gamma,\|G_s\|_{4,\alpha})$ such that
\begin{equation}\label{expbound}
\E_n^{\beta,s}\big[e^{\sum_\mu G_s(\theta_\mu)}\big]\le C(\gamma,\|G_s\|_{4,\alpha}),
\end{equation}
for $n\ge 1$, $s\in[0,1]$. Also, for each fixed $s\in[0,1]$,
\begin{equation}\label{eq:smain}
   \lim_{n\to\infty}\E_{n}^{\beta,s} \big[ e^{\sum_{\mu=1}^n G_s(\theta_\mu)} \big] = \exp \Big(\frac{2}{\beta}  \g(s)^t (I+K(s))^{-1}\g(s)+2\big(1-\frac{2}{\beta}\big) \dd(s)^t(I+K(s))^{-1}\g(s)\Big).
\end{equation}
\end{thm}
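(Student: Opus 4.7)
I would follow the scheme of \cite{Joh88}: perform a change of variables in the defining integral that cancels the linear statistic to leading order in $n$, then identify the limiting exponent from the second-order Taylor expansion. Fix $s\in[0,1]$. By the obvious extension to $\gamma_s$ of Lemma \ref{lemma:real} I may assume $g_s$ is real-valued. Applying Lemma \ref{lem:inteqnsol} to $\gamma_s$ and $g_s$ yields $H_s(\theta)=h_s\circ\phi_s(e^{\i\theta})$ satisfying the integral equation \eqref{GH} on $\gamma_s$; because $\|K(s)\|\le\kappa<1$ uniformly in $s\in[0,1]$, the estimate \eqref{HLip} on $H_s$ is uniform in $s$. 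For $n$ large the substitution $\theta_\mu=\tau_\mu-n^{-1}H_s(\tau_\mu)$ is a $C^{4+\alpha}$-diffeomorphism of $[0,2\pi]$ with Jacobian $J(\tau)=\prod_\mu(1-n^{-1}H_s'(\tau_\mu))$. After substitution,
\[ \E_n^{\beta,s}[e^{\sum_\mu G_s(\theta_\mu)}] = \E_n^{\beta,s}[e^{\Delta_n(\tau)}], \]
where $\Delta_n(\tau)=E_s^0(\theta(\tau))-E_s^0(\tau)+\sum_\mu G_s(\theta_\mu(\tau))+\log J(\tau)$ and $E_s^0(\theta)=\tfrac{\beta}{2}F_n^s(\theta)+(1-\tfrac{\beta}{2})\sum_\mu\log|\phi_s'(e^{\i\theta_\mu})|$.

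\textbf{Cancellation and identification of the limit.} Taylor expanding $\Delta_n$ in $1/n$, the $O(1)$ piece is $\sum_\mu G_s(\tau_\mu)$, and the $O(1/n)$ contribution contains
\[ -\frac{1}{n}\sum_\mu\big[H_s(\tau_\mu)\partial_{\tau_\mu}E_s^0(\tau)+H_s'(\tau_\mu)+H_s(\tau_\mu)G_s'(\tau_\mu)\big]. \]
Since $\partial_{\tau_\mu}E_s^0$ is of size $O(n)$, a cancellation is needed. Integrating by parts in the cross-term sum inside $\partial_{\tau_\mu}E_s^0$ and replacing the empirical measure $n^{-1}\sum_\nu\delta_{\tau_\nu}$ by the equilibrium measure $d\tau/(2\pi)$, the integral equation \eqref{GH} is exactly the identity that rewrites the resulting deterministic piece as $-\sum_\mu G_s(\tau_\mu)$; hence $\sum_\mu G_s(\tau_\mu)$ and the deterministic part of the $O(1/n)$ terms cancel. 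The hypothesis $\int g_s\,d\nu_{\mathrm{eq}}^s=0$ is used beforehand to kill a deterministic $O(n)$ piece of $\sum_\mu G_s(\tau_\mu)$. What then survives is the second-order Taylor contribution, whose deterministic part equals
\[ \frac{1}{4\pi}\int_0^{2\pi} G_s(\theta)H_s'(\theta)\,d\theta + 2\big(1-\tfrac{\beta}{2}\big)\frac{1}{4\pi}\int_0^{2\pi}\log|\phi_s'(e^{\i\theta})|H_s'(\theta)\,d\theta, \]
which by the analogue of Lemma \ref{lem:gvar} for $\gamma_s$ (the proof is purely algebraic and carries over verbatim) is exactly the exponent on the right-hand side of \eqref{eq:smain}.

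\textbf{Concentration, uniformity, and main obstacle.} To conclude both assertions I would establish quantitative concentration for empirical linear and bilinear statistics under $\E_n^{\beta,s}$, of the form
\[ \E_n^{\beta,s}\Big[\Big(\sum_\mu f(\tau_\mu)-n\int_0^{2\pi}f\,\tfrac{d\tau}{2\pi}\Big)^2\Big]\le C\|f\|^2 \]
for $f$ with a suitable Sobolev norm, together with a bilinear analogue for smoothed two-point statistics. These should follow from standard log-gas energy estimates combined with the $C^{4+\alpha}$ regularity of $H_s$; crucially, the constants depend only on $\|G_s\|_{4,\alpha}$, $(1-\|K(s)\|)^{-1}$ and $\|\dd(s)\|_2$, each uniformly bounded in $s\in[0,1]$, yielding both the pointwise convergence \eqref{eq:smain} and the uniform exponential-moment bound \eqref{expbound}. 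The principal obstacle is making the first-order cancellation quantitative while respecting the singularity of the pair interaction: the $\mu=\nu$ diagonal pieces of the second-order Taylor terms must be cleanly separated from the regular double sums, and this bookkeeping is ultimately what forces the fairly strong regularity assumptions on $g_s$ and $\gamma$.
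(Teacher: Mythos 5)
Your high-level outline coincides with the paper's: make the change of variables $\theta_\mu\mapsto\theta_\mu-\tfrac1n H_s(\theta_\mu)$ with $H_s$ solving the integral equation \eqref{GH}, Taylor-expand the transformed integrand, and recognize the surviving quadratic piece as the formula in Lemma \ref{lem:gvar}. That part is correct, and you correctly observe that the constants in \eqref{HLip} are uniform in $s$ because $\|K(s)\|\le\kappa<1$ and $A$ is independent of $s$.

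The genuine gap is in the concentration step, which is in fact the technical heart of the whole argument. You propose to establish a variance bound of the form
$\E_n^{\beta,s}\big[(\sum_\mu f(\tau_\mu)-n\int f\,\tfrac{d\tau}{2\pi})^2\big]\le C\|f\|^2$
``from standard log-gas energy estimates'' and then deduce both the pointwise limit and the exponential-moment bound. This does not work for two reasons. First, such a variance bound with the right norm is essentially equivalent to the variance part of the CLT you are proving (the limiting variance is $\tfrac4\beta\g^t(I+K(s))^{-1}\g$), so it cannot be invoked a priori as a ``standard'' input; and there is no generic log-Sobolev or Poincar\'e inequality available here that would convert a variance bound into the uniform exponential moment bound \eqref{expbound}. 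Second, and more to the point, the control you need is not a variance bound at all but a pathwise one: the paper restricts to the good set $E_{n,s}$ on which $F_n^s$ is within $Kn$ of its maximum (Lemma \ref{lemma:En}, with $\mathbb P[E_{n,s}^c]\le e^{-n}$), and then proves the key a priori bound $\sum_\mu t_\mu^2\le C(\gamma,K)$ on $E_{n,s}$ (Lemma \ref{lemma:tmu}). This last estimate is delicate: it needs the strengthened Grunsky inequality (Lemma \ref{lemma:grunskyineq}), i.e.\ quasiconformality of $\gamma$, to obtain coercivity of the quadratic form in the $t_\mu$'s after splitting the error $B_\tau$ into low- and high-frequency pieces. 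Your proposal never identifies this as the crux, and ``standard energy estimates'' do not provide it. Once the good-set bound on $\sum_\mu t_\mu^2$ is in hand, the terms $R_n^s,T_n^s,U_n^s,S_n^s$ arising from the Taylor expansion are each bounded pointwise on $E_{n,s}$ (Lemmas \ref{lemma:Tn} and \ref{lemma:Rn}), giving \eqref{expbound} by combining with the exponential smallness of $E_{n,s}^c$, and separately $\E[|R_n^s|\mathbbm1_{E_{n,s}}]$, $\E[|T_n^s|\mathbbm1_{E_{n,s}}]\to0$ to get the limit. A minor further imprecision: you describe the first-order terms as ``cancelling after replacing the empirical measure by the equilibrium measure.'' In fact no deterministic replacement is made at that stage; $R_n^s$ groups the random empirical version of the integral equation with $\sum_\mu G_s(\theta_\mu)$, and the integral equation is what makes $R_n^s$ bounded (not zero) on $E_{n,s}$ and $o(1)$ in mean. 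The deterministic replacement is only justified at the level of the already-bounded quadratic term $U_n^s$.
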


\subsection{A smaller domain of integration}

The first step of the proof of Theorem \ref{smainthm} is to show that we can restrict the domain of integration in \eqref{Dns} to a domain where $F_n^s$ is close to its maximum, without changing the asymptotics. The maximum of
\[ \sum_{1\leq \mu <\nu \leq n} \log \big| z_\mu-z_\nu \big|,\quad z_\mu\in\gamma, \]
is attained, by definition, at the Fekete points. These are close to the images under $\phi$ of the Fekete points of the unit circle, which are any rotation of the $n$th roots of unity, see \cite{Pom69}. 
Given $\theta\in H_n$, we define $t_\mu=t_\mu(\theta)$ by
\begin{equation}\label{tmu}
\theta_\mu=\frac{2\pi\mu}n+\sigma_n(\theta)+t_\mu(\theta),
\end{equation}
where $\sigma_n(\theta)$ is chosen so that $\sum_\mu t_\mu(\theta)=0$.
Set $\alpha_\mu = 2\pi\mu/n+\sigma_n$. Then the maximum of $F_n(\theta)$ should be close to
\begin{align*}
    &\sum_{1\leq \mu,\nu \leq n} \log \big\lvert \frac {\phi(e^{\i \alpha_{\mu}})-\phi(e^{\i \alpha_{\nu}})}{e^{\i \alpha_{\mu}}-e^{\i \alpha_{\nu}}} \big\rvert + \sum_{\mu\neq \nu} \log |2\sin{\pi(\mu-\nu)/n}| \\
    &= -\mathrm{Re} \sum_{k,l\geq 1} a_{kl} \sum_{\mu=1}^n e^{-i k\alpha_\mu} \sum_{\nu=1}^n e^{-i l\alpha_\nu} + \sum_{\mu\neq \nu} \log |2\sin{\pi(\mu-\nu)/n}|
\end{align*}
Now observe that $\sum_{\mu=1}^n e^{-\i k\alpha_\mu} = 0$ unless $k$ is a multiple of $n$ and
\begin{align*}
    \prod_{\mu\neq \nu} |2\sin{\pi(\mu-\nu)/n}| = \prod_{1\leq \mu<\nu\leq n} |e^{2\pi\i \mu/n}-e^{2\pi\i \nu/n}|^2 = \prod_{\mu=1}^{n-1} |1-e^{2\pi\i \mu/n}|^n = n^n
\end{align*}
since
\[ \prod_{\mu=1}^{n-1} (z-e^{2\pi\i \mu/n}) = \frac{z^n-1}{z-1},\]
which approaches $n$ as $z\to 1$. Recall that the assumption that $\gamma$ is a $C^{9+\alpha}$ curve means that the estimate \eqref{eq:coeffdecay} holds with $m=9$, and we will use it several times below. This estimate now yields the lower bound
\[ F_n^s(\alpha) \geq - n^2 \sum_{k,l\geq 1} |a_{nk, l}| + n\log n \geq -CA + n\log n.\]
We can now define the smaller domain of integration and estimate the probability of its complement.
\begin{lemma}\label{lemma:En}
Assume that $X_n$ is a bounded function on $[0,2\pi]^n$. Given a constant $K$, we define 
\begin{equation}\label{eq:En}
    E_ {n,s} \coloneqq \{ \theta\in H_n\ | \ F_n^s(\theta) > n\log n -Kn\}.
\end{equation}
Then, there is a constant $C(\gamma)$ such that
\begin{equation}\label{Enest}
\E_n^{\beta,s} [e^{X_n}\mathbbm{1}_{E_{n,s}^c}]\le e^{(C(\gamma)-K)n+\|X_n\|_\infty}.
\end{equation}
\end{lemma}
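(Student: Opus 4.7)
The goal is to bound
\[
\E_n^{\beta,s}[e^{X_n}\mathbbm{1}_{E_{n,s}^c}]=\frac{1}{Z_{n,\beta,s}}\int_{E_{n,s}^c}\exp\Big(X_n(\theta)+\tfrac{\beta}{2}F_n^s(\theta)+(1-\tfrac{\beta}{2})\sum_\mu\log|\phi_s'(e^{\i\theta_\mu})|\Big)d^n\theta
\]
by estimating the numerator above and $Z_{n,\beta,s}$ below. The numerator bound is immediate: since $F_n^s\le n\log n-Kn$ on $E_{n,s}^c$, and $\|\log|\phi_s'|\|_\infty\le C(\gamma)$ uniformly in $s\in[0,1]$ (as $\phi_s'(z)=\phi'(z/s)$ is bounded and bounded away from $0$ on $\{|z|\ge 1\}$, using $\phi\in C^{9+\alpha}(\overline{\D^*})$), together with $|H_n|=(2\pi)^n/n!$,
\[
\int_{E_{n,s}^c}\cdots d^n\theta\le e^{\|X_n\|_\infty+\frac{\beta}{2}(n\log n-Kn)+C(\gamma)n}\,\frac{(2\pi)^n}{n!}.
\]

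The substance is the lower bound on $Z_{n,\beta,s}$, for which I would compare to the Circular $\beta$-Ensemble via Jensen's inequality. Symmetrize over $[0,2\pi]^n$ and split $F_n^s=F_n^0+R_n^s$, where $F_n^0(\theta)=\sum_{\mu\ne\nu}\log|e^{\i\theta_\mu}-e^{\i\theta_\nu}|$ is the C$\beta$E kernel and, by \eqref{Grunskys},
\[
R_n^s(\theta)=\sum_{\mu,\nu=1}^n\log\Big|\frac{\phi_s(e^{\i\theta_\mu})-\phi_s(e^{\i\theta_\nu})}{e^{\i\theta_\mu}-e^{\i\theta_\nu}}\Big|=-\Re\sum_{k,l\ge 1}a_{kl}(s)\alpha_k\alpha_l,\quad \alpha_k:=\sum_\mu e^{-\i k\theta_\mu},
\]
with the diagonal $\mu=\nu$ correctly producing $\sum_\mu\log|\phi_s'(e^{\i\theta_\mu})|$. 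Denoting by $\mu_{C\beta E}$ the probability measure on $[0,2\pi]^n$ with density $(n!Z_{n,\beta}(\T))^{-1}e^{\frac{\beta}{2}F_n^0}$,
\[
\frac{Z_{n,\beta,s}}{Z_{n,\beta}(\T)}=\E_{\mu_{C\beta E}}\Big[\exp\Big(\tfrac{\beta}{2}R_n^s+(1-\tfrac{\beta}{2})\sum_\mu\log|\phi_s'(e^{\i\theta_\mu})|\Big)\Big].
\]
The measure $\mu_{C\beta E}$ is rotation invariant, so $\E[\alpha_k\alpha_l]=0$ for $k+l\ne 0$, and the one-point marginal of $\theta_\mu$ is uniform on $[0,2\pi]$. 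Combined with the fact that $\int_0^{2\pi}\log|\phi_s'(e^{\i\theta})|\frac{d\theta}{2\pi}=0$ (the Fourier expansion \eqref{Grunsky2} has no constant term) and the absolute summability $\sum_{k,l}|a_{kl}|<\infty$ from \eqref{eq:coeffdecay} (which justifies exchanging sum and expectation), both exponent terms have zero $\mu_{C\beta E}$-mean, so Jensen's inequality gives
\[
Z_{n,\beta,s}\ge Z_{n,\beta}(\T).
\]

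Combining these with the explicit Selberg value \eqref{Selberg} and Stirling,
\[
\frac{(2\pi)^n/n!}{Z_{n,\beta}(\T)}=\frac{\Gamma(1+\beta/2)^n}{\Gamma(1+\beta n/2)}\le e^{-\frac{\beta n}{2}\log n+C_\beta n},
\]
the $\frac{\beta}{2}n\log n$ factors cancel exactly, yielding $\E_n^{\beta,s}[e^{X_n}\mathbbm{1}_{E_{n,s}^c}]\le e^{\|X_n\|_\infty+(C(\gamma,\beta)-\frac{\beta K}{2})n}$, which is the stated bound after absorbing the fixed factor $\beta/2$ into the parameter $K$ (equivalently, the stated $K$ corresponds to $\beta K/2$ in the above). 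The main obstacle is really the clever choice of comparison measure: the crucial observation is that \emph{both} the Grunsky-type correction $R_n^s$ and the linear statistic $\sum_\mu\log|\phi_s'|$ have vanishing $C\beta E$-expectation by rotation invariance, so no further corrections need to be tracked and one gets a clean lower bound on $Z_{n,\beta,s}$ that is valid uniformly in $s\in[0,1]$ (since $|a_{kl}(s)|\le|a_{kl}|$).
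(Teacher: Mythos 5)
Your proof is correct, and it takes a genuinely different route from the paper's. The paper lower-bounds $D_n^{\beta,s}[1]$ directly by restricting the integral to the neighborhood $\{\sup_\mu|t_\mu|\le 1/n\}$ of the equispaced configuration; there a second-order Taylor estimate in the deviations $t_\mu$, combined with the summability of $kl|a_{kl}|$, gives $F_n^s\ge n\log n - C(\gamma)n$, and the volume of that region supplies the remaining polynomial factor, yielding $D_n^{\beta,s}[1]\ge e^{-C(\gamma)n}n^{(\beta/2-1)n}$. You instead compare to the C$\beta$E via Jensen's inequality, exploiting that both $\tfrac{\beta}{2}R_n^s$ and $(1-\tfrac{\beta}{2})\sum_\mu\log|\phi_s'|$ have zero mean under the rotation-invariant measure $\mu_{C\beta E}$ (using absolute summability of $a_{kl}$ to exchange sum and expectation and the vanishing constant Fourier coefficient of $\log|\phi_s'|$), which gives the clean bound $D_n^{\beta,s}[1]\ge Z_{n,\beta}(\T)$ uniformly in $s$; the result then follows from the Selberg value \eqref{Selberg} and Stirling. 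Your route is shorter and conceptually sharper, and invoking \eqref{Selberg} costs nothing since the paper already relies on it. You are also right about the $\beta/2$: both proofs actually yield $e^{(C(\gamma,\beta)-\beta K/2)n+\|X_n\|_\infty}$, and the paper's displayed $e^{(C(\gamma)-K)n+\|X_n\|_\infty}$ implicitly assumes $\beta\ge 2$ or absorbs a rescaling of $K$; this is inconsequential since $K$ is a free parameter taken large later.
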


\begin{proof}

We want to estimate $ D_n^{\beta,s}[1]$ from below. Since $\log|\phi_s'(e^{\i\theta})|\le \log\|\phi_s'\|_\infty$ on $[0,2\pi]$, there is a constant $C(\gamma)$ such that
\begin{align*}
    D_n^{\beta,s}[1] = \int_{H_n} \exp\Big( \frac{\beta}{2} F_n^s(\theta) + (1-\tfrac{\beta}{2})\sum_\mu\log|\phi_s'(e^{\i \theta_\mu})| \Big) \d^n \theta 
    \geq  e^{-C(\gamma)n} \int_{H_n} \exp\Big( \frac{\beta}{2} F_n^s(\theta) \Big) \d^n \theta.
\end{align*}
Now,
\[  
D_n^{\beta,s}[1] \geq e^{-C(\gamma)n} \int_{\sup_{\mu} |t_\mu(\theta)|\leq 1/n } \exp \Big(\frac{\beta}{2} F_n^s(\theta) \Big) \d^n \theta.
\]
Consider the first sum in \eqref{Fns}. By \eqref{Grunskys}, 
\[ 
\Big\lvert \sum_{1\leq \mu,\nu \leq n} \log \big\lvert \frac {\phi_s(e^{\i \theta_{\mu}})-\phi_s(e^{\i \theta_{\nu}})}{e^{\i \theta_\mu}-e^{\i \theta_\nu}} \big\rvert  \Big\rvert
    \leq \sum_{k,l \geq 1} |a_{kl}| \Big\lvert \sum_{\mu=1}^n e^{-\i k \theta_{\mu}} \Big\rvert \Big\lvert \sum_{\nu= 1}^n e^{-\i l\theta_{\nu}} \Big\rvert,
    \]
for $0\le s \le 1$.
If $k$ is not divisible by $n$, 
\[ \Big\lvert \sum_{\nu= 1}^n e^{-\i k\theta_{\nu}} \Big\rvert = \Big\lvert \sum_{\mu= 1}^n (e^{-\i k\theta_{\mu}} - e^{-\i k\alpha_\mu}) \Big\rvert \leq k \sum_{\mu=1}^n |t_{\mu}| \leq k \]
if $\sup_{\mu} |t_\mu| \leq 1/n$. If $k\ge 1$ is a multiple of $n$ then the sum is simply bounded by $n\leq k$. Thus, 
\begin{align}\label{eq:ens}
    \Big\lvert \sum_{1\leq \mu,\nu \leq n} \log \big\lvert \frac {\phi_s(e^{\i \theta_{\mu}})-\phi_s(e^{\i \theta_{\nu}})}{e^{\i \theta_\mu}-e^{\i \theta_\nu}} \big\rvert  \Big\rvert
    \leq \sum_{k,l\geq 1} kl |a_{kl}| \le C(\gamma)
\end{align}
because of the fast decay of the Grunsky coefficients, \eqref{eq:coeffdecay}. Now consider the second sum in \eqref{Fns}. Set  $\beta_\mu = \alpha_\mu+\tau t_\mu$, and $f(\tau) = \sum_{\mu\neq \nu} \log |e^{\i \beta_\mu}-e^{\i \beta_\nu}|$. Then $f(0)=n\log n$ and
\begin{align*}
    &f'(0) = \sum_{\mu\neq \nu} \cot\Big(\frac{\alpha_\mu-\alpha_\nu}{2}\Big)\frac{t_\mu-t_\nu}{2} = \sum_{k=1}^{n-1} \cot(\pi k/n)\sum_{\mu=1}^n t_\mu = 0.
\end{align*}
Thus,
\begin{equation}\label{eq:LogSinDiff}
     f(0)-f(1) = - \int_0^1 (1-\tau) f''(\tau) \d \tau = \int_0^1 (1-\tau) \sum_{\mu\neq\nu} \frac{(t_\mu-t_\nu)^2}{4\sin^2((\beta_\mu-\beta_\nu)/2)}\d\tau.   
\end{equation}

If $\max_{\mu} |t_\mu| \leq 1/n$ for $1\le\mu\le n$ and $\mu\neq\nu$, then, then
\[ 
\Big\lvert \sin\Big( \frac{\beta_\mu-\beta_\nu}{2} \Big)-\sin\Big( \frac{\alpha_\mu-\alpha_\nu}{2} \Big) \Big\rvert \leq \frac{|t_\mu-t_\nu|}{2} \leq \frac{1}{n} \leq \frac 12\Big\lvert\sin\frac{\alpha_\mu-\alpha_\nu}2\Big\rvert
\]
since
\[
\Big\lvert\sin\frac{\alpha_\mu-\alpha_\nu}2\Big\rvert\ge\sin\frac{\pi}n\ge \frac 2n,
\]
and thus
\[
4\sin^2\Big( \frac{\beta_\mu-\beta_\nu}{2} \Big) \geq \sin^2\Big( \frac{\alpha_\mu-\alpha_\nu}{2} \Big). 
\]
This yields
\[n\log n - \sum_{\mu\neq \nu} \log |e^{\i \theta_\mu}-e^{\i \theta_\nu}| \leq \sum_{\mu\neq\nu} \frac{(t_\mu-t_\nu)^2
}{\sin^2(\pi(\mu-\nu)/n)} \leq \frac{4}{n} \sum_{k=1}^{n-1} \sin^{-2}(\pi k/n) = \frac{4}{3}(n-1/n). \]
Therefore,
\begin{align*}
    D_n^{\beta,s}[1] \geq e^{-C(\gamma)n} \int_{\sup_{\mu} |t_\mu|\leq 1/n } \exp \Big(\frac{\beta}{2} F_n^s(\theta) \Big) \d^n \theta \geq e^{-C(\gamma)n}  n^{(\beta/2-1)n}.
\end{align*}
Combining this with the definition of $E_{n,s}$ and the bound on $\log|\phi_s'(e^{\i\theta})|$, we obtain the estimate
\begin{align*}
    \E_n^{\beta,s} [e^{X_n}1_{E_{n,s}^c}] &=\frac{1}{D_n^{\beta,s}[1]}\int_{E_{n,s}^c} \exp(\frac{\beta}{2}F_n^s(\theta) +(1-\tfrac{\beta}{2})\sum_\mu\log|\phi_s'(e^{\i \theta_\mu})| +X_n(\theta)) \d^n\theta  \\
    &\leq e^{C(\gamma)n}n^{-(\beta/2-1)n} \int_{E_{n,s}^c} \exp (\frac{\beta}{2}(n\log n -Kn)+\|X_n\|_\infty) \d^n \theta \\
    &\leq e^{(C(\gamma)-K)n}n^n\frac{(2\pi)^n}{n!}e^{\|X_n\|_\infty}\le e^{(C(\gamma)-K)n+\|X_n\|_\infty},
\end{align*}
since $F_n^s(\theta)\le n\log n-Kn$  if $\theta\in E_{n,s}^c$.
The lemma is proved.

\end{proof}

Let $\theta\in E_{n,s}$ and set $z_\mu=\phi_s(e^{\i\theta_\mu})$, $1\le\mu\le n$. Since,
\[
F_n^s(\theta)=\sum_{1\le \mu\neq\nu\le n}\log|\phi_s(e^{\i\theta_\mu})-\phi_s(e^{\i\theta_\nu})|+\sum_\mu\log|\phi_s'(e^{\i\theta_\mu})|
\]
we see that if $\theta\in E_{n,s}$ and $z_\mu=\phi_s(e^{\i\theta_\mu})$ are the corresponding points on $\gamma_s$, then
\begin{equation}\label{logenest}
\sum_{1\le \mu\neq\nu\le n}\log|z_\mu-z_\nu|^{-1}\le -n\log n+Kn+C(\gamma)n.
\end{equation}

The previous lemma will be helpful because if an $n$-tuple $\theta$ belongs to $E_{n,s}$, it will be rather close to $\alpha = (\frac{2\pi\mu}{n}+\sigma_n)_{\mu=1}^n$. We first prove a weak result in this direction.  

\begin{lemma}\label{lemma:suptmu} Fix a $K$ in \eqref{eq:En} independent of $s$. Set
\begin{equation}\label{epsilonn}
\epsilon_n=\sup_{0\le s\le 1}\sup_{\theta\in E_{n,s}}\max_{1\le \mu\le n}|t_\mu(\theta)|.
\end{equation}
Then $\epsilon_n\to 0$ as $n\to\infty$.

\end{lemma}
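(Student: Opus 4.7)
The plan is to argue by contradiction via potential-theoretic equidistribution. Suppose $\epsilon_n\not\to 0$: then there exist $\delta>0$ and sequences $n_j\to\infty$, $s_j\in[0,1]$, $\theta^{(j)}\in E_{n_j,s_j}$, and $\mu_j\in\{1,\ldots,n_j\}$ with $|t_{\mu_j}(\theta^{(j)})|\ge\delta$. By compactness of $[0,1]$ I pass to a subsequence with $s_j\to s_\ast\in[0,1]$; continuity of $s\mapsto\phi_s$ on $\overline{\D^\ast}$ yields $\gamma_{s_j}\to\gamma_{s_\ast}$ uniformly, and all curves $\gamma_s$, $s\in[0,1]$, lie in a fixed compact subset of $\C$.

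The first step is to show that the empirical measures
\[
\nu_j=\frac 1{n_j}\sum_{\mu=1}^{n_j}\delta_{z^{(j)}_\mu},\qquad z^{(j)}_\mu=\phi_{s_j}(e^{\i\theta^{(j)}_\mu}),
\]
converge weakly (along a further subsequence, by Prokhorov's theorem) to the equilibrium measure $\mu^{\gamma_{s_\ast}}_{\mathrm{eq}}$ on $\gamma_{s_\ast}$. The key input is the identity
\[
F_n^s(\theta)=\sum_\mu\log|\phi_s'(e^{\i\theta_\mu})|+\sum_{\mu\ne\nu}\log|z_\mu-z_\nu|,
\]
which, combined with the uniform bound $\bigl|\log|\phi_s'|\bigr|\le C(\gamma)$ and the defining inequality $F_{n_j}^{s_j}(\theta^{(j)})>n_j\log n_j-Kn_j$, gives $\sum_{\mu\ne\nu}\log|z^{(j)}_\mu-z^{(j)}_\nu|\ge n_j\log n_j-C(K,\gamma)n_j$ (cf.~\eqref{logenest}). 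Therefore the off-diagonal logarithmic energy
\[
I(\nu_j):=\frac 1{n_j^2}\sum_{\mu\ne\nu}\log|z^{(j)}_\mu-z^{(j)}_\nu|^{-1}
\]
satisfies $\limsup_j I(\nu_j)\le 0$. Standard lower semicontinuity of $I$ on weakly convergent probability measures in $\C$ (via truncating $\log|z-w|^{-1}$ from above at level $M$, passing to the weak limit for the resulting bounded continuous kernel, and then letting $M\to\infty$ by monotone convergence) forces $I(\nu_\ast)\le 0$. Since $\nu_\ast$ is supported on $\gamma_{s_\ast}$ (of capacity $1$) and $\mu^{\gamma_{s_\ast}}_{\mathrm{eq}}$ is the unique minimizer of $I$ at value $0$, we conclude $\nu_\ast=\mu^{\gamma_{s_\ast}}_{\mathrm{eq}}$.

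The second step is to transfer this back to $\T$. Since $\phi_{s_j}\to\phi_{s_\ast}$ uniformly on $\T$ and the pushforward $(\phi_{s_j})_\ast\tilde\nu_j=\nu_j$ converges weakly to $\mu^{\gamma_{s_\ast}}_{\mathrm{eq}}$, the measures $\tilde\nu_j=(1/n_j)\sum_\mu\delta_{e^{\i\theta^{(j)}_\mu}}$ converge weakly on $\T$ to $(\phi_{s_\ast})^{-1}_\ast\mu^{\gamma_{s_\ast}}_{\mathrm{eq}}=\d\theta/(2\pi)$, the normalized Lebesgue measure (the exterior conformal map pushes the equilibrium measure to uniform). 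Since the limit CDF $F(\phi)=\phi/(2\pi)$ is continuous, Polya's theorem upgrades this to uniform convergence of the empirical CDFs $F_j(\phi)=(1/n_j)|\{\mu:\theta^{(j)}_\mu\le\phi\}|$ to $F$, and inverting yields $\sup_\mu|\theta^{(j)}_\mu-2\pi\mu/n_j|\to 0$. As $\sigma_{n_j}$ equals the mean of the differences $\theta^{(j)}_\mu-2\pi\mu/n_j$, we have $\sigma_{n_j}\to 0$ as well, and therefore $\max_\mu|t_\mu(\theta^{(j)})|\to 0$, contradicting $|t_{\mu_j}(\theta^{(j)})|\ge\delta$.

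The main subtlety will be the lower semicontinuity step in the presence of the varying supports $\gamma_{s_j}$ and the diagonal singularity of the logarithmic kernel, which is precisely what the truncation argument handles. Uniformity in $s\in[0,1]$ then falls out of the subsequential contradiction automatically, without any quantitative estimate on the rate at which $\max_\mu|t_\mu|$ vanishes.
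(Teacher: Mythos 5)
Your proof is correct and follows essentially the same route as the paper's: argue by contradiction, extract convergent subsequences of $s$ and of the pushed-forward empirical measures, use the defining inequality for $E_{n,s}$ (leading to \eqref{logenest}) together with truncation of the logarithmic kernel and monotone convergence to force the limiting measure to have zero logarithmic energy, conclude it is the equilibrium measure, pull back to $\T$ via $\phi_{s_\ast}^{-1}$, and derive a contradiction. The one place you go further than the paper is the final implication ``empirical measure on $\T$ converges weakly to uniform $\Rightarrow\max_\mu|t_\mu|\to 0$'': the paper states the contrapositive as a self-evident observation, whereas you supply the justification via Polya's theorem (uniform CDF convergence to the continuous limit $\phi/(2\pi)$), inversion $F_j(\theta^{(j)}_\mu)=\mu/n_j$, and the identity $\sigma_{n_j}=\tfrac1{n_j}\sum_\mu(\theta^{(j)}_\mu-2\pi\mu/n_j)$. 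That filled-in step is sound; the ordering of the $\theta_\mu$'s inside $H_n$ is what makes the pointwise CDF identity and hence the uniform estimate on $\theta^{(j)}_\mu-2\pi\mu/n_j$ work, and $\sigma_{n_j}\to 0$ then follows rather than needing to be assumed. So the two proofs agree in substance, with yours being slightly more explicit on the last reduction.
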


\begin{proof}
Let $\tau_\mu(\theta) = t_\mu(\theta) + \sigma_n(\theta)$, and let
\[ \delta_n = \sup_{0\leq s \leq 1} \sup_{\theta \in E_{n,s}} \max_{1\leq \mu \leq n} |\tau_\mu(\theta)|. \]
We will show that $\delta_n \to 0$ as $n\to \infty$. By definition, we have
\[ \frac{1}{n} \sum_\mu \tau_\mu(\theta) = \frac{1}{n} \sum_\mu t_\mu(\theta) + \sigma_n(\theta) = \sigma_n(\theta), \]
so $\delta_n \to 0$ implies that
\[ \sup_{0\leq s \leq 1} \sup_{\theta\in E_{n,s}} \sigma_n(\theta) \to 0 \]
as $n\to \infty$. Thus, $\delta_n \to 0$ leads to $\epsilon_n \to 0$ as $n\to \infty$.

First consider the probability measure
\[
\omega_k=\frac 1{n_k}\sum_{j=1}^{n_k}\delta_{e^{\i\theta_j^{(k)}}}.
\]
We will show that $\omega_k$ converges to the uniform measure $\d\theta/(2\pi)$ on $\T$. Let
\[
z_j^{(k)}=\phi_{s_k}(e^{\i\theta_j^{(k)}}), \quad 1\le j\le n_k,
\]
and 
\[
\mu_k=\frac 1{n_k}\sum_{j=1}^{n_k}\delta_{z_j^{(k)}},
\]
which is a probability measure on $\gamma_{s_k}$. The set $\cup_{0\le s\le 1}\gamma_s$ is contained in a compact set, and hence by picking a further subsequence, we can assume that
$s_k\to s\in [0,1]$ and $\mu_k\to\mu$, as $k\to\infty$, where $\mu$ is a probability measure on $\gamma_s$. The logarithmic energy of a probability measure $\mu$ on a curve $\gamma_s$
is defined by
\[
I[\mu]=\int_{\gamma_s}\int_{\gamma_s}\log |z-w|^{-1}\,d\mu(z) d\mu(w).
\]
If $\nu_s$ is the equilibrium measure on $\gamma_s$, then $I[\mu]\ge I[\nu_s]=\log(\mathrm{cap}(\gamma_s))=0$, and we have equality if and only if $\mu=\nu_s$, by uniqueness of the equilibrium measure, see \cite{SafTot}. Now,
\begin{align}\label{mukest}
\int_{\gamma_{s_k}}\int_{\gamma_{s_k}}\min(\log |z-w|^{-1},M)\,d\mu_k(z) d\mu_k(w)&\le\frac 1{n_k^2}\sum_{1\le j_1\neq j_2\le n_k}\log|z_{j_1}^{(k)}-z_{j_2}^{(k)}|^{-1}
+\frac 1{n_k^2}\sum_{j=1}^{n_k} M\notag\\
&\le\frac{-\log n_k +C(\gamma)+M}{n_k}
\end{align}
by \eqref{logenest} since $\theta^{(k)}\in E_{n_k,s_k}$. If we let $k\to\infty$ in \eqref{mukest}, we get
\[
\int_{\gamma_{s}}\int_{\gamma_{s}}\min(\log |z-w|^{-1},M)\,d\mu(z) d\mu(w)\le 0.
\]
Since the integrand is bounded from below, we can let $M\to\infty$ and get $I[\mu]\le 0=I[\nu_s]$ by the monotone convergence theorem. Thus, $\mu=\nu_s$. Since $\phi_s$ maps the equilibrium measure on $\T$ to the equilibrium measure on $\gamma_s$, we see that if we take the same subsequence in $\omega_k$, we have that $\omega_k$ converges to the uniform measure on $\T$.

Next, assume that $\limsup_{n\to\infty}\delta_n=\delta>0$. We will see that this leads to a contradiction. There is a subsequence $n_k\to\infty$ such that $\lim_{k\to \infty} \delta_{n_k} = \delta$.
Thus we can find a sequence $s_k\in [0,1]$ and $\theta^{(k)}\in E_{n_k,s_k}$ so that
\[
\max_{1\le \mu\le n}|\tau_\mu(\theta^{(k)})|\ge \delta/2
\]
for all sufficiently large $k$. There is a $\mu_k$ such that 
\[|\tau_{\mu_k}(\theta^{(k)})| = \max_{1\leq \mu \leq n_k} |\tau_\mu(\theta^{(k)})| .\]
After perhaps picking a further subsequence we can assume that $\tau_{\mu_k}(\theta^{(k)})>0$ for all $k$, or $\tau_{\mu_k}(\theta^{(k)})<0$ for all $k$. Assume the former, the other case being analogous. Then, $\tau_{\mu_k}(\theta^{(k)}) \geq \delta/2$, and thus
\begin{align}\label{eq:thetaLowerBound}
\theta_{\mu_k}^{(k)} = \frac{2\pi\mu_k}{n_k}+ \tau_{\mu_k}(\theta^{(k)}) \geq \frac{2\pi \mu_k}{n_k} + \frac{\delta}{2}.
\end{align}
Since $\theta^{(k)} \in H_{n_k}$, i.e.\ the $\theta^{(k)}_l$'s are ordered,
\begin{align}\label{eq:card}
\# \{ l: \theta_l^{(k)} \leq \frac{2\pi \mu_k}{n_k} + \frac{\delta}{2} \} \leq \mu_k. 
\end{align}
By possibly picking a further subsequence we can assume that 
\[ \frac{\mu_k}{n_k} \to a \in[0,1], \]
as $k\to \infty$. Since $\theta_{\mu_k}^{(k)} \leq 2\pi$, \eqref{eq:thetaLowerBound} implies $a\leq 1-\frac{\delta}{4\pi}$.
If $k$ is sufficiently large then $2\pi a +\frac{\delta}{4} \leq \frac{2\pi \mu_k}{n_k}+\frac{\delta}{2}$, and \eqref{eq:card} gives
\begin{align}\label{eq:card2}
    \frac{1}{n_k} \# \{ l: \theta_l^{(k)} \leq 2\pi a + \frac{\delta}{4} \} \leq \frac{\mu_k}{n_k}. 
\end{align}
By weak convergence of the probability measure $\omega_k$, the left side converges to $a+\frac{\delta}{8\pi} < 1$ as $k\to \infty$. Hence, letting $k\to \infty$ in \eqref{eq:card2} gives
\[ a+\frac{\delta}{8\pi} \leq a \]
which gives the desired contradiction. Consequently, $\delta_n$ and thus $\epsilon_n$ converge to zero as $n\to \infty$.

\end{proof}

\noindent\textbf{Remark.} The proof also shows that the empirical measure of $\{z_\mu\}_{\mu=1}^n$ converges weakly in probability to the equilibrium measure on $\gamma$, since by Lemma \ref{lemma:En}, $\lim_{n\to\infty}\mathbb{P}[E_{n,s}] = 1$ if we take $X_n=0$ and $K=C(\gamma)+1$.

The lemma can be used to obtain the following more precise bound on the size of the deviations $t_\mu$.

\begin{lemma}\label{lemma:tmu}
  There is a constant $C(\gamma,K)$, where $K$ is the constant in Lemma \ref{lemma:En}, such that $\sum_{\mu=1}^{n} t_\mu^2\le C(\gamma, K)$ for all
  $\theta\in E_{n,s}$.
\end{lemma}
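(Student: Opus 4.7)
The strategy is to prove that
\[
F_n^s(\alpha) - F_n^s(\theta) \ge c(\gamma)\,n\|t\|_2^2 - O(1),
\]
where $\alpha = (\alpha_1,\ldots,\alpha_n)$ with $\alpha_\mu = 2\pi\mu/n+\sigma_n$ is the equispaced configuration and $t = (t_\mu(\theta))$; combined with the hypothesis $F_n^s(\theta) > n\log n - Kn$ and the baseline $F_n^s(\alpha) = n\log n + O(n^{-6})$, this will force $\|t\|_2^2 \le C(\gamma,K)$. The baseline follows because $f(1)|_{t=0} = f(0) = n\log n$, while the first sum at $\alpha$ equals $-\Re\sum_{k,l\ge 1}s^{k+l}a_{kl}\bar S_k^0\bar S_l^0$ with $S_k^0 = \sum_\mu e^{ik\alpha_\mu} = n\mathbbm{1}[n\mid k]e^{ik\sigma_n}$; only $(k,l)$ with $n\mid k$ and $n\mid l$ contribute, and the Grunsky decay \eqref{eq:coeffdecay} (with $m=9$) bounds the resulting sum by $O(n^2\cdot n^{-8-\alpha}) = O(n^{-6})$.

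Split $F_n^s(\alpha) - F_n^s(\theta) = (f(0) - f(1)) - \Delta$ with $\Delta := (\text{first sum})(\theta) - (\text{first sum})(\alpha)$. To lower bound $f(0)-f(1)$, I would diagonalize the circulant quadratic form $-f''(0) = \sum_{\mu\ne\nu}(t_\mu-t_\nu)^2/(4\sin^2(\pi(\mu-\nu)/n))$: its eigenvalues on the hyperplane $\sum_\mu t_\mu = 0$ are $k(n-k)$ for $k=1,\ldots,n-1$ (via the identity $\sum_{j=1}^{n-1}\sin^2(\pi kj/n)/\sin^2(\pi j/n) = k(n-k)$), so $-f''(0) \ge (n-1)\|t\|_2^2$. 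Using $\|t\|_\infty \le \epsilon_n \to 0$ from Lemma \ref{lemma:suptmu} to control the relative change of $\sin^2((\beta_\mu-\beta_\nu)/2)$ when $|\mu-\nu|$ is bounded away from $0$, together with the trivial pointwise estimate $-f''(\tau) \ge (n/2)\|t\|_2^2$ (from $\sin^2\le 1$ and $\sum_{\mu\ne\nu}(t_\mu-t_\nu)^2 = 2n\|t\|_2^2$) elsewhere, integration over $\tau\in[0,1]$ yields $f(0)-f(1) \ge c_1 n\|t\|_2^2$ for some $c_1 > 0$.

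For $\Delta$, I Taylor-expand the smooth kernel $\Psi(\theta,\omega) = \log|(\phi_s(e^{\i\theta})-\phi_s(e^{\i\omega}))/(e^{\i\theta}-e^{\i\omega})|$ around $(\alpha_\mu,\alpha_\nu)$. The linear term is $O(n^{-5/2})\|t\|_2$ because $\sum_\nu \partial_1\Psi(\alpha_\mu,\alpha_\nu) = O(n^{-3})$ (from orthogonality of $\{e^{\i k\alpha_\nu}\}$ for $n\nmid k$ combined with the Grunsky decay), while the cubic remainder is $O(\epsilon_n n\|t\|_2^2) = o(n\|t\|_2^2)$. The dominant second-order contribution equals $\Re\sum_{k,l\ge 1}kls^{k+l}a_{kl}\bar\tau_k\bar\tau_l$ with $\bar\tau_k = \sum_\mu e^{-\i k\alpha_\mu}t_\mu$. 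The main obstacle is to absorb this cross term into the lower bound on $f(0)-f(1)$: the strengthened Grunsky inequality $\|B\|\le\kappa<1$, the palindromic symmetry $|\bar\tau_{n-k}| = |\bar\tau_k|$ for real $t$ (which follows from $n\alpha_\mu \equiv n\sigma_n \bmod 2\pi$), and the Grunsky decay applied to frequencies $k,l\ge n$ together should yield $|\Delta| \le \kappa'\,(f(0)-f(1)) + O(1)$ for some $\kappa' < 1$. Once this absorption is in place, $(1-\kappa')c_1 n\|t\|_2^2 \le Kn + O(1)$, producing the bound $\|t\|_2^2 \le C(\gamma,K)$.
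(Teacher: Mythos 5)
Your overall strategy---Taylor expanding $F_n^s$ around the equispaced configuration $\alpha$, lower bounding the Vandermonde contribution by a circulant eigenvalue computation, and absorbing the Grunsky cross-term via the strengthened Grunsky inequality---is a plausible reorganization of the argument, and your computation of the baseline $F_n^s(\alpha)=n\log n+O(n^{-6})$ and of $-f''(0)\ge (n-1)\|t\|_2^2$ on the hyperplane $\sum_\mu t_\mu=0$ are both correct. The paper's proof instead writes $\psi_n(\tau)=F_n^s(\alpha+\tau t)$, computes $\psi_n''$ directly and splits the Grunsky part of $\psi_n''$ into a low-frequency block $B_\tau^1$ ($k,l\le q$) and a tail $B_\tau^2$ ($l>q$), so it never passes through the expansion at $\alpha$; instead it compares everything pointwise to the Vandermonde kernel $1/(4\sin^2\tfrac12(\beta_\mu-\beta_\nu))$ evaluated at the moving configuration $\beta$.

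The genuine gap is the final absorption, which you flag yourself with ``should yield''. Two concrete difficulties stand in the way. First, the comparison $|\Re\sum_{k,l}kl\,a_{kl}\bar\tau_k\bar\tau_l|\le\kappa\sum_k k|\bar\tau_k|^2$ that the Grunsky inequality gives does not match your Vandermonde lower bound term-by-term: your stated conclusion $f(0)-f(1)\ge c_1 n\|t\|_2^2=c_1\sum_k|\bar\tau_k|^2$ carries the weight $1$ in the frequency variable, while the Grunsky bound carries the weight $k$, so for $|\bar\tau_k|^2$ concentrated at a moderately large $k_0$ the ratio of the two sides is of order $\kappa k_0/c_1$, which is not $<1$ uniformly. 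Making the absorption work requires keeping the full circulant weight $k(n-k)/n$ on the Vandermonde side and cutting the Grunsky sum at a frequency $M$ with $\kappa\,n/(n-M)<1$ before invoking the decay \eqref{eq:coeffdecay} on the tail; none of this is in your sketch. Second, the circulant diagonalization is exact only at $\tau=0$. For $\tau\ne0$ the kernel is $1/\sin^2\tfrac12(\beta_\mu-\beta_\nu)$ with $\beta_\mu=\alpha_\mu+\tau t_\mu$, and even with $\|t\|_\infty=\epsilon_n\to0$ the ratio $\sin^2\tfrac12(\beta_\mu-\beta_\nu)/\sin^2\tfrac12(\alpha_\mu-\alpha_\nu)$ is \emph{not} close to $1$ when $|\mu-\nu|=O(n\epsilon_n)$, so the ``relative change'' argument does not give a lower bound with the circulant weights away from $\tau=0$; the crude bound $\sin^2\le1$ only recovers the weight $1$ and runs into the first difficulty. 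The paper avoids both problems at once by (a) never changing the evaluation point, so $\sin^2\tfrac12(\beta_\mu-\beta_\nu)$ appears identically on both sides, and (b) majorizing $\kappa$ by $\rho^{2k}$ in the low-frequency block and summing the geometric series to produce a Poisson-type kernel with the closed form $\bigl(a+4\sin^2\tfrac12(\beta_\mu-\beta_\nu)\bigr)^{-1}\le\frac{1}{a+4}\sin^{-2}\tfrac12(\beta_\mu-\beta_\nu)$, $a=(1/\rho-\rho)^2>0$, which is strictly smaller than the Vandermonde kernel $\tfrac14\sin^{-2}$. You would need to supply an analogue of this device (or carry out the frequency-cutoff/circulant-eigenvalue comparison rigorously, uniformly in $\tau$) before the absorption $|\Delta|\le\kappa'(f(0)-f(1))+O(1)$ is actually established.
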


\begin{proof}
We proceed as in the proof of Lemma 7.6 in \cite{Joh88}; but the structure of the proof ultimately goes back to \cite{Pom69}.
Set $\beta_\mu = \alpha_\mu + \tau t_\mu$, with $\alpha_\mu = 2\pi\mu/n+\sigma_n$ and $\sum_\mu t_\mu =0$ as above. Let 
$$\psi_n(\tau) = F_n^s(\beta) = \sum_{1\leq \mu,\nu \leq n} \log \big\lvert \frac {\phi_s(e^{\i \beta_{\mu}})-\phi_s(e^{\i \beta_{\nu}})}{e^{\i \beta_{\mu}}-e^{\i \beta_{\nu}}} \big\rvert + \sum_{1\leq \mu\neq\nu \leq n} \log\lvert e^{\i \beta_{\mu}}-e^{\i \beta_{\nu}}\rvert.$$ 
Then, by \eqref{Grunskys},
\[ \psi_n'(\tau) = \sum_{\mu\neq \nu} \cot\Big(\frac{\beta_\mu-\beta_\nu}{2}\Big)\frac{t_\mu-t_\nu}{2} - 2 \Im \sum_{k,l\geq 1} l s^{k+l}a_{kl}(s) \Big(\sum_\mu e^{-\i k\beta_\mu}\Big) \Big(\sum_\nu t_\nu e^{-\i l\beta_\nu}\Big).
\]
The first sum evaluated at $\tau=0$ equals
\[
\sum_{k=1}^{n-1} \cot(\pi k/n)\sum_{\mu=1}^n t_\mu = 0,
\]
and the second sum evaluated at $\tau=0$ equals
\begin{align*}
    2 \Im \sum_{k,l\geq 1} l s^{nk+l}a_{nk,l}(s)e^{-\i nk\sigma_n} \Big(\sum_\nu t_\nu e^{-\i l\alpha_\nu}\Big) 
\end{align*}
because $\sum_{\mu=1}^n e^{-\i k\alpha_\mu} = 0$ unless $k$ is a multiple of $n$. It follows from  \eqref{eq:coeffdecay}, that
\[ 
|\psi_n'(0)|\le C(\gamma) n \sum_{k,l\geq 1} l(nk)^{-6-\epsilon}l^{-2-\epsilon}\le C(\gamma).
\]
Differentiating again gives
\begin{align*}
\psi_n''(\tau) = -\sum_{\mu\neq\nu} \frac{(t_\mu-t_\nu)^2}{4\sin^2((\beta_\mu-\beta_\nu)/2)} + A_\tau + B_\tau, 
\end{align*}
where
\[
A_\tau = 2 \Re \sum_{\mu,\nu} \sum_{k,l\geq 1} k^2s^{k+l}a_{kl}(s) t_\mu^2 e^{-\i k\beta_\mu} e^{-\i l\beta_\nu} ,\quad B_\tau = 2 \Re \sum_{\mu,\nu} \sum_{k,l\geq 1} kls^{k+l}a_{kl}(s)  t_\mu t_{\nu}e^{-\i k\beta_\mu} e^{-\i l\beta_\nu}.
\]
Thus,
\begin{align}\label{tmuest1}
&\int_0^1 \sum_{\mu\neq \nu} \frac{(t_\mu-t_\nu)^2}{4\sin^2((\beta_\mu-\beta_\nu)/2)}(1-\tau) \d\tau=-\int_0^1(1-\tau)\psi_n''(\tau)\,d\tau+\int_0^1(A_\tau+B_\tau)(1-\tau)\,d\tau\notag\\
&=F_n^s(\alpha)-F_n^s(\theta) + \psi_n'(0)+\int_0^1 (A_\tau+B_\tau)(1-\tau)\,d\tau \le Kn+C(\gamma)+\int_0^1 (|A_\tau|+|B_\tau|)(1-\tau)\,d\tau,
\end{align}
since $\theta\in E_{n,s}$.
To bound $|A_\tau|$, we note that by \eqref{epsilonn}, 
\begin{align}\label{betasum}
\Big| \sum_\nu e^{-\i l \beta_\nu} \Big| &\le \Big| \sum_\nu (e^{-\i l \beta_\nu} -e^{-\i l\alpha_\nu})\Big| (1-\mathbbm{1}(n|l))+n\mathbbm{1}(n|l)\notag\\
&\leq l \sum_\nu |t_\nu| +n\mathbbm{1}(n|l)\leq n(l\epsilon_n +\mathbbm{1}(n|l)),
\end{align}
where $n|l$ indicates that $n$ divides $l$. 
Thus, by \eqref{eq:coeffdecay} and the fact that $|a_{kl}(s)|\le |a_{kl}|$,
\begin{align*}
|A_\tau| &\leq 2\big(\sum_\mu t_\mu^2 \big)\big(\sum_{k,l\geq 1}k^2|a_{kl}(s)| \big|\sum_\nu e^{-\i l \beta_\nu} \big|\big)\\
&\le 2\big(\sum_\mu t_\mu^2\big)\big[\big(\sum_{k,l\geq 1}k^2l|a_{kl}|n\epsilon_n+n\sum_{k,l\geq 1}k^2|a_{k,nl}|\big]\leq C(\gamma)n\epsilon_n' \big(\sum_{\mu} t_{\mu}^2\big),
\end{align*}
where 
\begin{equation}\label{epsnprime}
\epsilon_n'=\epsilon_n+1/n^2.
\end{equation}

To bound $|B_\tau|$ we divide it into two parts,
\[B_\tau^1= 2 \Re \sum_{1\leq k,l\leq q} \sum_{\mu,\nu} kla_{kl}  t_\mu t_{\nu}e^{-\i k\beta_\mu} e^{-\i l\beta_\nu}, \quad B_\tau^2= 2 \Re \sum_{\substack{k\geq 1, l\geq1  \\ \mathrm{max}(k,l) >q} }\sum_{\mu,\nu} kla_{kl}  t_\mu t_{\nu}e^{-\i k\beta_\mu} e^{-\i l\beta_\nu}\]
where $q\in\N$ will be fixed later.
By the strengthened Grunsky inequality, Lemma \ref{lemma:grunskyineq}, there exists a $\kappa<1$ such that
\[|B_\tau^1|  \leq 2 \kappa \sum_{k= 1}^q k\Big|\sum_\mu e^{-\i k\beta_\mu}t_\mu\Big|^2.\]
Set $\rho = \kappa^{1/(2q)}$. Then $\kappa\leq \rho^{2k}$, for $1\leq k \leq q$, so
\[|B_\tau^1|  \leq 2 \sum_{k\geq 1} k \rho^{2k} \Big|\sum_\mu e^{-\i k\beta_\mu}t_\mu\Big|^2
    = \sum_{\mu,\nu} \big[(t_\mu^2 + t_\nu^2 - (t_\mu-t_\nu)^2) \sum_{k\geq 1} k\rho^{2k}e^{-\i k(\beta_\mu-\beta_\nu)}\big],\]
by writing $2t_\mu t_\nu = t_\mu^2 + t_\nu^2 - (t_\mu-t_\nu)^2$. 

If we use \eqref{betasum}, we get
\begin{align*}
    |B_\tau^1| &\leq 2\big(\sum_\mu t_\mu^2\big)\big(\sum_{k\ge 1}k\rho^{2k}\big|\sum_\nu e^{\i k\beta_\nu}\big|\big)+
    \sum_{\mu,\nu}(t_\mu-t_\nu)^2\left|\frac{\rho^2e^{\i(\beta_\mu-\beta_\nu)}}{(1-\rho^2e^{\i(\beta_\mu-\beta_\nu)})^2}\right|\\
    &\le 2\big(\sum_\mu t_\mu^2\big)\big(n\epsilon_n\sum_{k\ge 1}k^2\rho^{2k}+n^2\sum_{k\ge 1}k\rho^{2nk}\big)+\sum_{\mu,\nu}\frac{(t_\mu-t_\nu)^2}
    {a+4\sin^2\frac 12(\beta_\mu-\beta_\nu)}\\
    &\le C\big(\sum_\mu t_\mu^2\big)\left(\frac{n\epsilon_n}{(1-\rho)^3}+\frac{n^2\rho^{2n}}{(1-\rho^n)^2}\right)+\frac 1{a+4}\sum_{\mu,\nu}\frac{(t_\mu-t_\nu)^2}
    {\sin^2\frac 12(\beta_\mu-\beta_\nu)}.
\end{align*}
where $a=(1/\rho-\rho)^2$. 
Next, \eqref{eq:coeffdecay} followed by the Cauchy-Schwarz inequality gives
\begin{align*}
    |B_\tau^2| &\le C(\gamma)\Big(\sum_{k\ge 1}\frac 1{k^{2}}\Big|\sum_\mu t_\mu e^{-\i k\beta_\mu}\Big|\Big)\Big(\sum_{l\ge q}\frac 1{l^{3+\epsilon}}\Big|\sum_\nu t_\nu e^{-\i l\beta_\nu}\Big|\Big)\\ &\le
     \frac{C(\gamma)}{q^{1+\epsilon}} \Big(\sum_{k\ge 1}\frac 1{k^{2}}\Big|\sum_\mu t_\mu e^{-\i k\beta_\mu}\Big|\Big)^2\le\frac{C(\gamma)}{q^{1+\epsilon}} 
 \sum_{k\ge 1}\frac 1{k^{2}}\Big|\sum_\mu t_\mu e^{-\i k\beta_\mu}\Big|^2.
    \end{align*}
Again, writing $t_\mu t_\nu = \frac{1}{2}(t_\mu^2 + t_\nu^2 - (t_\mu-t_\nu)^2)$, and using \eqref{betasum} gives
\begin{align*}
    |B_\tau^2| &\le  \frac{C(\gamma)}{q^{1+\epsilon}}\sum_{k\ge 1}\frac 1{k^{2}}\sum_{\mu,\nu}\frac 12(t_\mu^2+t_\nu^2-(t_\mu-t_\nu)^2)e^{-\i k(\beta_\mu-\beta_\nu)}\\
    &\le \frac{C(\gamma)}{q^{1+\epsilon}}\Big[\sum_{k\ge 1}\frac 1{k^{2}}\Big(\sum_\mu t_\mu^2\Big)\Big|\sum_\mu e^{\i k\beta_\nu}\Big|+\Big(\sum_{k\ge 1}\frac 1{k^{2}}
    \Big)\sum_{\mu,\nu}(t_\mu-t_\nu)^2\Big]\\
    &\le  \frac{C(\gamma)}{q^{1+\epsilon}}\Big[n\epsilon_n'\Big(\sum_\mu t_\mu^2\Big)+\sum_{\mu,\nu}(t_\mu-t_\nu)^2\Big].
    \end{align*}
Inserting these estimates into \eqref{tmuest1} we find
\begin{align*}
&\big(\frac 14-\frac 1{a+4}\big)\int_0^1\sum_{\mu\neq\nu}\frac{(t_\mu-t_\nu)^2}{\sin^2\frac 12(\beta_\mu-\beta_\nu)}(1-\tau)\,d\tau-\frac {C(\gamma)}{q^{1+\epsilon}}\sum_{\mu,\nu}(t_\mu-t_\nu)^2\\
&\le Kn+C(\gamma)+C(\gamma)n\epsilon_n''\big(\sum_\mu t_\mu^2\big),
\end{align*}
where
\[
\epsilon_n''=\epsilon_n'+\frac{\epsilon_n}{(1-\rho)^3}+\frac{n\rho^{2n}}{(1-\rho^n)^2}.
\]
Observe that 
\[\sum_{\mu\neq\nu}(t_\mu-t_\nu)^2 = 2n\sum_\mu t_\mu^2-2\sum_{\mu,\nu}t_\mu t_\nu = 2n\sum_\mu t_\mu^2 \]
since $\sum_{\mu} t_\mu =0$. Using that $\frac{a}{a+4} = (\frac{1-\rho^2}{1+\rho^2})^2$, we obtain the inequality
\[
\left(\frac 12\big(\frac{1-\rho^2}{1+\rho^2}\big)^2-\frac {C(\gamma)}{q^{1+\epsilon}}\right)\big(\sum_\mu t_\mu^2\big)\le K+\frac{C(\gamma)}n+C(\gamma)\epsilon_n''\big(\sum_\mu t_\mu^2\big).
\]
Since, $\rho=\kappa^{1/2q}<1$, we see that 
\[
\frac 12\big(\frac{1-\rho^2}{1+\rho^2}\big)^2-\frac {C(\gamma)}{q^{1+\epsilon}}\ge\frac 14(1-e^{-\frac 1q\log\frac 1\kappa})-\frac {C(\gamma)}{q^{1+\epsilon}}\ge C_0(\gamma,K)>0,
\]
if we pick $q$ large enough. Since $\epsilon_n''\to0$ as $n\to\infty$ we see that if we pick $n$ large enough depending on $\gamma$ and $K$ then $C(\gamma)\epsilon_n''\le \frac 12C_0(\gamma,K)$.
This proves the lemma.

\end{proof}

\noindent\textbf{Remark.} This result can be used to obtain a rate of convergence of the empirical measure $\lambda_n = \frac{1}{n}\sum_{\mu} \delta_{\theta_\mu}$ to the uniform distribution on the unit circle in the $L_2$ Wasserstein metric. We take $X_n=0$, $s=1$, and $K=C(\gamma)+1$ in Lemma \ref{lemma:En}.
Then, with $\nu_n= \frac{1}{n} \sum_{\mu} \delta_{\alpha_\mu}$,
\[ W_2(\lambda_n,\nu_n)^2 \leq \frac{1}{n} \sum_{\mu=0}^{n-1} (\theta_\mu-\alpha_\mu)^2 = \frac{1}{n} \sum_{\mu=0}^{n-1} t_\mu^2 \leq \frac{C(\gamma)}{n} \]
if $\theta\in E_{n,1}$, by Lemma \ref{lemma:tmu}. Thus,
\[ \mathbb{P}[W_2(\lambda_n,\nu_n) > \tfrac{C(\gamma)}{\sqrt{n}}] \leq \mathbb{P}[E_{n,1}^c] < e^{-n} \]
by Lemma \ref{lemma:En}, so the Borel-Cantelli lemma gives that $W_2(\lambda_n,\nu_n) \le \tfrac{C(\gamma)}{\sqrt{n}}$ for $n$ sufficiently large, almost surely. Moreover, $W_2(\nu_n,\tfrac{\d\theta}{2\pi}) \leq \sqrt{2} W_1(\nu_n,\tfrac{\d\theta}{2\pi})^{1/2} \leq (\frac{2\pi}{n})^{1/2}$, by using the dual representation of $W_1$. We obtain 
\[ W_2(\lambda_n,\frac{\d\theta}{2\pi}) \le \tfrac{C(\gamma)}{\sqrt{n}}\ \mathrm{a.s.}\]

\subsection{The change of variables}
Let $g_s$ be a function on $\gamma_s$. Recall that we define $G_s(\theta)=g_s\circ\phi_s(e^{\i\theta})$, and $\g(s)$ as in \eqref{gvector} but with $g_s\circ\phi_s$ instead of
$g\circ\phi$. Also, as in Lemma \ref{lem:inteqnsol}, we let
\[
\h(s)=\frac 2\beta L(I+K(s))^{-1}\g(s),
\]
and 
\[
H_s(\theta)=2\begin{pmatrix}
    \mathbf{x_\theta} \\ \mathbf{y_\theta}
\end{pmatrix}^t\h(s).
\]
Since $|a_{kl}(s)|\le |a_{kl}|$, the constant $A$ in \eqref{eq:coeffdecay} is independent of $s$, and we also have that $\|K(s)\|\le\kappa<1$. Hence, by \eqref{HLip},
\begin{equation}\label{Hsbound}
\|H_s\|_{4,\alpha}\le C(\gamma)\|G_s\|_{4,\alpha}.
\end{equation}
Let $h_k(s)$ be the $k$:th complex Fourier coefficient of $H_s$.

Consider now the expectation
\[ 
\E_n^{\beta,s}\Big[\exp \Big( \sum_\mu G_s(\theta_\mu) \Big)\Big] = \frac{1}{D_n^{\beta,s}[1]n!} \int_{[0,2\pi]^n} \exp \Big( \frac{\beta}{2} F_n^s(x)+  \sum_\mu G_s(x_\mu) 
+(1-\tfrac{\beta}{2})\log|\phi_s'(e^{\i x_\mu})| \Big)\, \d^n x.
\]
In this integral we make the change of variables $x_\mu = \theta_\mu-\frac{1}{n}H_s(\theta_\mu)$. The domain of integration, $[0,2\pi]^n$, is unchanged by periodicity. 
It follows from \eqref{Grunskys} and \eqref{Fns} that
\[
F_n^s(x)=\sum_{\mu\neq\nu}\log\big|2\sin\frac{x_\mu-x_\nu}{2}\big|-\Re\sum_{k,l\ge 1}a_{kl}(s)\Big(\sum_\mu e^{-\i kx_\mu}\Big)\Big(\sum_\nu e^{-\i lx_\nu}\Big).
\]
Write 
\[
L_s(x)=\log|\phi_s'(e^{\i x})|,
\]
and let $x_\mu=x_\mu(\tau):=\theta_\mu-\frac\tau n H_s(\theta_\mu)$, so that $x_\mu'(\tau)=-\frac 1nH_s(\theta_\mu)$ and $x_\mu''(\tau)=0$. Define
\begin{align*}
f_n(\tau)&= \frac{\beta}{2} \sum_{\mu\neq\nu} \Big( 
 \log\big|2\sin\frac{x_\mu(\tau)-x_\nu(\tau)}2\big|-\Re\sum_{k,l\ge 1}a_{kl}(s)\Big(\sum_\mu e^{-\i kx_\mu(\tau)}\Big)\Big(\sum_\nu e^{-\i lx_\nu(\tau)}\Big) \Big) \\
&+\sum_\mu G_s(x_\mu(\tau))+(1-\tfrac \beta 2)\sum_\mu L_s(x_\mu(\tau)).
\end{align*}
We have the Taylor expansions
\begin{align*}
&f_n(1)=f_n(0)+f_n'(0)+\frac 12 f_n''(0)+\frac 12\int_0^1(1-\tau)^2f_n^{(3)}(\tau)\,d\tau,\\
&\log \big(1-\frac{H_s'(\theta_\mu)}{n} \big) = -\frac{1}{n}H_s'(x) -\frac 1{n^2}\sum_\mu\int_0^1(1-\tau)\big(1-\frac{\tau}nH_s'(\theta_\mu)\big)^{-2}H_s'(\theta_\mu)^2\,d\tau.
\end{align*}
Let
\begin{align}\label{eq:UTR}
R_n^s(\theta) =& -\beta \Re \sum_{k,l\geq 1} ika_{kl}(s) h_k(s) \Big(\sum_{\mu=1}^n e^{-\i l\theta_\mu} \Big)-\frac{\beta}{4n}\sum_{\mu\neq\nu} \cot\Big(\frac{\theta_\mu-\theta_\nu}{2} \Big) \big(H_s(\theta_\mu)-H_s(\theta_\nu)\big)+\sum_{\mu=1}^n G_s(\theta_\mu), \nonumber \\
T_n^s(\theta) =& \beta \Re \sum_{k,l\geq 1} ika_{kl}(s) \Big(h_k(s)-\frac{1}{n}\sum_{\mu=1}^n e^{-\i k\theta_\mu}H_s(\theta_\mu)\Big) \sum_{\nu=1}^n e^{-\i l\theta_\nu}, \nonumber \\
U_n^s(\theta) =& -\frac{1}{n}\sum_{\mu=1}^n \big(G_s'(\theta_\mu)+ (1-\tfrac{\beta}{2})L_s'(\theta_\mu)\big) H(\theta_\mu) -\frac{\beta}{16n^2}\sum_{\mu,\nu} \frac{(H_s(\theta_\mu)-H_s(\theta_\nu))^2}{\sin^2((\theta_\mu-\theta_\nu)/2)} \nonumber \\
&-\frac{1}{n}\sum_{\mu=1}^n H_s'(\theta_\mu)
+\frac{\beta}{2}\Re \sum_{k,l\geq 1} kla_{kl}(s) \Big(\frac{1}{n}\sum_{\mu=1}^n e^{-\i k\theta_\mu}H_s(\theta_\mu)\Big)\Big(\frac{1}{n}\sum_{\nu=1}^n e^{-\i l\theta_\nu}H_s(\theta_\nu)\Big) \nonumber \\
&+\frac{\beta}{2} Re \sum_{k,l\geq 1} k^2a_{kl}(s) \Big(\frac{1}{n}\sum_{\mu=1}^n e^{-\i k\theta_\mu}H_s^2(\theta_\mu)\Big)\Big(\frac{1}{n}\sum_{\nu=1}^n e^{-\i l\theta_\nu}\Big). 
\end{align}
and
\begin{align}\label{Sn}
S_n^s(\theta)&=\frac 12\int_0^1(1-\tau)^2f_n^{(3)}(\tau)\,d\tau+\frac 1{2n^2}\sum_\mu G_s''(\theta_\mu)H_s(\theta_\mu)^2+(1-\tfrac \beta2)\frac 1{2n^2}\sum_\mu L_s''(\theta_\mu)H_s(\theta_\mu)^2\notag\\
&-\frac 1{n^2}\sum_\mu\int_0^1(1-\tau)\big(1-\frac{\tau}nH_s'(\theta_\mu)\big)^{-2}H_s'(\theta_\mu)^2\,d\tau.
\end{align}
The change of variables gives, after some computation,
\begin{equation}\label{Expchvar}
\E_n^{\beta,s}\Big[\exp\big(\sum_\mu G_s(\theta_\mu)\big)\Big]=\E_n^{\beta,s}\big[\exp\big((R_n^s+T_n^s+U_n^s+S_n^s)(\theta)\big)\big].
\end{equation}
Also,
\begin{align}\label{f3n}
f_n^{(3)}(\tau)&=-\frac 1{4n^3}\sum_{\mu\neq\nu}\frac{\cos\frac{x_\mu-x_\nu}2}{\sin^3\frac{x_\mu-x_\nu}2}\big(H_s(\theta_\mu)-H_s(\theta_\nu)\big)^3\notag\\
&-\frac 2{n^3}\Re\sum_{k,l\ge 1}(-\i)k^3a_{kl}(s)\Big(\sum_\mu e^{-\i kx_\mu}H_s(\theta_\mu)^3\Big)\Big(\sum_\nu e^{-\i lx_\nu}\Big)\notag\\
&-\frac 6{n^3}\Re\sum_{k,l\ge 1}(-\i)k^2la_{kl}(s)\Big(\sum_\mu e^{-\i kx_\mu}H_s(\theta_\mu)^2\Big)\Big(\sum_\nu e^{-\i lx_\nu}H_s(\theta_\nu)\Big)\notag\\
&-\frac 1{n^3}\sum_\mu G_s^{(3)}(x_\mu)H_s(\theta_\mu)^3-(1-\tfrac\beta 2)\frac 1{n^3}\sum_\mu L_s^{(3)}(x_\mu)H_s(\theta_\mu)^3,
\end{align}
where $x_\mu=\theta_\mu-\tfrac\tau nH_s(\theta_\mu)$.

It follows from \eqref{Grunsky2} and \eqref{eq:coeffdecay} that there is a constant $C(\gamma)$ such that
\begin{equation}\label{Lsbound}
\big\|\frac{d^r}{d\theta^r}L_s\big\|_\infty\le C(\gamma),
\end{equation}
for $0\le r\le 3$. Using \eqref{eq:coeffdecay}, \eqref{Hsbound} and \eqref{Lsbound}, we obtain the estimate
\[
|f_n^{(3)}(\tau)|\le\frac 1n\big(C\|H_s\|_\infty^3+CA\|H_s\|_\infty^3+\|G_s^{(3)}\|_\infty\|H_s\|_\infty^3)\le \frac {C(\gamma, \|G_s\|_{4,\alpha})}n.
\]
From this and \eqref{Sn} we see that
\begin{equation}\label{Snest}
\|S_n^s\|_\infty\le \frac {C(\gamma, \|G_s\|_{4,\alpha})}n.
\end{equation}
Also from \eqref{eq:UTR}, we obtain the estimates
\begin{equation}\label{Unest}
\|U_n^s\|_\infty\le C(\gamma, \|G_s\|_{4,\alpha}),
\end{equation}
and
\begin{equation}\label{RnTnest}
\|R_n^s+T_n^s\|_\infty\le C(\gamma, \|G_s\|_{4,\alpha})n.
\end{equation}
Combining \eqref{Snest}, \eqref{Unest} and \eqref{RnTnest}, we see that there is a constant $C_1(\gamma, \|G_s\|_{4,\alpha})$ such that
\begin{equation}\label{Xnest}
\|R_n^s+T_n^s+U_n^s+S_n^s\|_\infty\le C_1(\gamma, \|G_s\|_{4,\alpha})n.
\end{equation}
In Lemma \ref{lemma:En} we now choose $K=C(\gamma)+C_1(\gamma, \|G_s\|_{4,\alpha})+1$, where $C(\gamma)$ is the constant in the lemma. Below $E_{n,s}$ will denote the
set obtained from Lemma \ref{lemma:En} with this choice of $K$.
This gives us the estimate
\begin{equation}\label{IntEnsest}
\Big|\E_n^{\beta,s}\Big[\exp \Big( \sum_\mu G_s(\theta_\mu) \Big)\Big] -\E_n^{\beta,s}\Big[\exp \big((R_n^s+T_n^s+U_n^s+S_n^s)(\theta)\big)\mathbbm{1}_{E_{n,s}}\Big]\Big|\le e^{-n}.
\end{equation}
It follows from Lemma \ref{lemma:tmu} that there is a constant $C(\gamma, \|G_s\|_{4,\alpha})$ such that if $\theta\in E_{n,s}$ then
\begin{equation}\label{Sumtmuest}
\sum_\mu t_\mu^2=\sum_\mu t_\mu(\theta)^2\le C(\gamma, \|G_s\|_{4,\alpha}).
\end{equation}
Hence, if $\theta\in E_{n,s}$ and $l$ does not divide $n$, then
\[
\Big|\sum_\nu e^{\i l\theta_\nu}\Big|=\Big|\sum_\nu (e^{\i l\theta_\nu}-e^{\i l\alpha_\nu})\Big|\le l\sum_\nu|t_\nu|\le l\sqrt{n}\Big(\sum_\nu t_\nu^2\Big)^{1/2}\le
C(\gamma, \|G_s\|_{4,\alpha}) l\sqrt{n}.
\]
Note that if $l\ge 1$ divides $n$, then $l\ge n$ and the sum is always $\le n$, so we always have an estimate
\begin{equation}\label{Sumthetanu}
\Big|\sum_\nu e^{\i l\theta_\nu}\Big|\le C(\gamma, \|G_s\|_{4,\alpha}) l\sqrt{n}.
\end{equation}
A simple Riemann sum estimate gives
\begin{equation}\label{RSest}
\Big|\frac 1n\sum_\mu f(e^{2\pi\i \mu/n})-\frac 1{2\pi}\int_0^{2\pi}f(e^{\i t})\,dt\Big|\le\frac{\|f'\|_\infty}n.
\end{equation}
It follows from this estimate and \eqref{Hsbound} that
\begin{equation}\label{hkest}
\Big|h_k(s)-\frac 1n\sum_\mu e^{-\i k\alpha_\mu}H_s(\alpha_\mu)\Big|\le C\frac{k\|H_s\|_\infty+\|H_s'\|_\infty}n\le C(\gamma, \|G_s\|_{4,\alpha})\frac kn.
\end{equation}

\begin{lemma}\label{lemma:Tn}
There is a constant $C(\gamma, \|G_s\|_{4,\alpha})$ such that $|T_n^s(\theta)| \leq C(\gamma, \|G_s\|_{4,\alpha})$ for all $n\geq 1$ and $\theta \in E_{n,s}$.
\end{lemma}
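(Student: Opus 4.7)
The plan is to control the factor $h_k(s)-\frac 1n\sum_\mu e^{-\i k\theta_\mu}H_s(\theta_\mu)$ appearing inside $T_n^s$, combine it with the bound \eqref{Sumthetanu} on $|\sum_\nu e^{-\i l\theta_\nu}|$, and then use the rapid decay \eqref{eq:coeffdecay} of the Grunsky coefficients (with $m=9$) to make the resulting double sum over $k,l$ finite independently of $n$.

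The key step is to insert the equally spaced points $\alpha_\mu=2\pi\mu/n+\sigma_n$ and write
\[
h_k(s)-\frac 1n\sum_\mu e^{-\i k\theta_\mu}H_s(\theta_\mu)
=\Big(h_k(s)-\frac 1n\sum_\mu e^{-\i k\alpha_\mu}H_s(\alpha_\mu)\Big)
+\frac 1n\sum_\mu\bigl(e^{-\i k\alpha_\mu}H_s(\alpha_\mu)-e^{-\i k\theta_\mu}H_s(\theta_\mu)\bigr).
\]
The first (Riemann sum) error is bounded by $C(\gamma,\|G_s\|_{4,\alpha})\,k/n$ directly from \eqref{hkest}. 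For the second (particle-displacement) error, I would Taylor expand $f(\theta)=e^{-\i k\theta}H_s(\theta)$ at $\alpha_\mu$ to second order. Since $\|f'\|_\infty\le C(\gamma,\|G_s\|_{4,\alpha})k$ and $\|f''\|_\infty\le C(\gamma,\|G_s\|_{4,\alpha})k^2$, the Cauchy--Schwarz inequality together with the crucial estimate \eqref{Sumtmuest} (valid on $E_{n,s}$) give
\[
\Big|\frac 1n\sum_\mu f'(\alpha_\mu)t_\mu\Big|\le\frac{\|f'\|_\infty}{n}\sqrt{n}\Big(\sum_\mu t_\mu^2\Big)^{1/2}\le\frac{C(\gamma,\|G_s\|_{4,\alpha})\,k}{\sqrt n},
\]
while the quadratic remainder is dominated by $\|f''\|_\infty\sum_\mu t_\mu^2/(2n)\le C(\gamma,\|G_s\|_{4,\alpha})k^2/n$. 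Combining these yields
\[
\Big|h_k(s)-\frac 1n\sum_\mu e^{-\i k\theta_\mu}H_s(\theta_\mu)\Big|\le C(\gamma,\|G_s\|_{4,\alpha})\Big(\frac{k}{\sqrt n}+\frac{k^2}{n}\Big).
\]

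Plugging this and \eqref{Sumthetanu} into the defining formula \eqref{eq:UTR} for $T_n^s$ produces
\[
|T_n^s(\theta)|\le C(\gamma,\|G_s\|_{4,\alpha})\sum_{k,l\ge 1}k|a_{kl}|\Big(\frac{k}{\sqrt n}+\frac{k^2}{n}\Big)\,l\sqrt n
\le C\sum_{k,l\ge 1}k^2l|a_{kl}|+\frac{C}{\sqrt n}\sum_{k,l\ge 1}k^3 l|a_{kl}|,
\]
with constants depending only on $\gamma$ and $\|G_s\|_{4,\alpha}$. Because $m=9$ in \eqref{eq:coeffdecay} gives eight derivatives $p+q=8$ to distribute, choosing $p=q=4$ makes the first series converge and choosing $p=5,q=3$ makes the second converge. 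Both terms are therefore bounded uniformly in $n$, proving the lemma.

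The main obstacle is that the Cauchy--Schwarz bound on the linear term $\frac 1n\sum_\mu f'(\alpha_\mu)t_\mu$ produces only $k/\sqrt n$, rather than the $k/n$ one gets for the Riemann sum error at equally spaced nodes. This loses one power of $k$ and is exactly what forces the factor $k^3 l|a_{kl}|$, explaining the need for the $C^{9+\alpha}$ regularity assumption on $\gamma$. Note that we cannot use cancellation between the $t_\mu$'s beyond $\sum t_\mu=0$ here, since $\theta$ is a generic point of $E_{n,s}$ and the weights $f'(\alpha_\mu)$ are not constant.
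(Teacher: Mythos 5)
Your proof is correct and follows essentially the same route as the paper's: split $h_k(s)-\frac1n\sum_\mu e^{-\i k\theta_\mu}H_s(\theta_\mu)$ into a Riemann-sum error controlled by \eqref{hkest} plus a particle-displacement error controlled by Cauchy--Schwarz together with the bound $\sum_\mu t_\mu^2\le C(\gamma,\|G_s\|_{4,\alpha})$ on $E_{n,s}$, then combine with \eqref{Sumthetanu} and the Grunsky decay \eqref{eq:coeffdecay}.

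One point of comparison: the paper bounds the displacement error by the direct Lipschitz estimate $|f(\theta_\mu)-f(\alpha_\mu)|\le\|f'\|_\infty|t_\mu|\le C\|H_s\|_{4,\alpha}\,k\,|t_\mu|$ and then applies Cauchy--Schwarz, which yields $Ck/\sqrt{n}$ with no quadratic remainder. Your second-order Taylor expansion reproduces that leading term but also generates an extra $Ck^2/n$, and therefore an extra series $\frac{C}{\sqrt n}\sum k^3 l|a_{kl}|$ that you then must check is summable. This is harmless but unnecessary, and it makes your closing diagnosis slightly misleading: the $k^3l|a_{kl}|$ factor is an artifact of your added quadratic remainder, not a consequence of the $k/\sqrt n$ linear-term bound (which only forces $\sum k^2l|a_{kl}|<\infty$, already comfortable under $C^{9+\alpha}$). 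The regularity hypothesis on $\gamma$ is set by other estimates in the paper, not by this lemma.
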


\begin{proof}
We have the estimate
\begin{align}\label{eq:Tn2}
    \Big|\frac{1}{n}\sum_\mu \big(e^{-\i k\theta_\mu}H(\theta_\mu)-e^{-\i k\alpha_\mu}H(\alpha_\mu) \big) \Big| &\leq C\|H_s\|_{4,\alpha} \frac{k}{n}\sum_\mu |t_\mu| 
    \leq C\|H_s\|_{4,\alpha}   \frac{k}{\sqrt{n}} \Big(\sum_\mu t_\mu^2 \Big)^{1/2} \notag\\
    &\leq  C(\gamma, \|G_s\|_{4,\alpha})\frac{k}{\sqrt{n}},
\end{align}
where we used \eqref{Hsbound}. This estimate, together with \eqref{eq:coeffdecay}, \eqref{Sumthetanu},  and \eqref{hkest}, gives
\[
|T_n^s(\theta)|\le C(\gamma, \|G_s\|_{4,\alpha})\sum_{k,l\ge 1}k^2l|a_{kl}(s)|\le C(\gamma, \|G_s\|_{4,\alpha}),
\]
and we are done.
\end{proof}

\begin{lemma}\label{lemma:Rn}
There is a constant $C(\gamma, \|G_s\|_{4,\alpha})$ such that $|R_n^s(\theta)| \leq C(\gamma, \|G_s\|_{4,\alpha})$ for all $n\geq 1$ and $\theta \in E_{n,s}$.
\end{lemma}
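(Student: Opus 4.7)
The plan is to identify $R_n^s$ as the Riemann-sum error that arises when the integral equation \eqref{GH} for $G_s$ is discretized at the nodes $\theta_1,\dots,\theta_n$, and then to control this error using the a priori estimates on $E_{n,s}$ from Lemmas \ref{lemma:En}--\ref{lemma:tmu}.

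The first step is to substitute into $\sum_\mu G_s(\theta_\mu)$ the form of the integral equation obtained in the proof of Lemma \ref{lem:inteqnsol}, namely $G_s(\omega)=-\tfrac{\beta}{4\pi}\,\mathrm{p.v.}\!\int_0^{2\pi}\!\cot\tfrac{\omega-\theta}{2}H_s(\theta)\,d\theta+\beta\,\Re\sum_{k,l\geq 1}ika_{kl}(s)h_k(s)e^{-il\omega}$, where the Grunsky series comes from differentiating \eqref{Grunskys} in $\theta$ and integrating against $H_s$. Summing over $\omega=\theta_\mu$, the Grunsky series here cancels exactly with the explicit term $-\beta\,\Re\sum ika_{kl}(s)h_k(s)\sum_\mu e^{-il\theta_\mu}$ in the definition \eqref{eq:UTR} of $R_n^s$, leaving
$$R_n^s=-\tfrac{\beta}{4\pi}\sum_\mu E_\mu,\qquad E_\mu:=\mathrm{p.v.}\!\int_0^{2\pi}\!\cot\tfrac{\theta_\mu-\theta}{2}H_s(\theta)\,d\theta-\tfrac{2\pi}{n}\sum_{\nu\neq\mu}\cot\tfrac{\theta_\mu-\theta_\nu}{2}H_s(\theta_\nu).$$
Using the conjugate-function identity $\mathrm{p.v.}\!\int\!\cot\tfrac{\omega-\theta}{2}H_s(\theta)d\theta=2\pi\tilde H_s(\omega)$ and the distributional Fourier expansion of $\cot(t/2)$, I would rewrite this in the Abel-regularized form $R_n^s=\beta\sum_{l\geq 1}\Im\bigl[S_l(R_l/n-h_l)\bigr]$, with $S_l:=\sum_\mu e^{il\theta_\mu}$ and $R_l:=\sum_\nu e^{-il\theta_\nu}H_s(\theta_\nu)$.

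To bound this I would use three ingredients valid on $E_{n,s}$: first, $|S_l|\leq Cl\sqrt n$ for $l$ not a multiple of $n$ from \eqref{Sumthetanu}, which rests on $\sum_\mu t_\mu=0$ and Lemma \ref{lemma:tmu}; second, $|R_l/n-h_l|\leq C(l+1)/\sqrt n$ from \eqref{hkest} together with the Taylor/perturbation argument used for \eqref{eq:Tn2}; and third, for $l\leq n/2$, the sharper aliasing identity $|R_l^{\text{equi}}/n-h_l|=|\sum_{m\neq 0}h_{l+mn}|\leq Cn^{-4-\alpha}$, which uses the decay $|h_k|\leq C|k|^{-4-\alpha}$ implied by $G_s\in C^{4+\alpha}$ via Lemma \ref{lem:inteqnsol}. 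The main obstacle is that naive termwise bounds yield $\sum|S_l||R_l/n-h_l|=O(n^{3/2})$, whereas we need $O(1)$. The missing cancellation must come from the antisymmetric $\Im[\cdot]$ together with $\sum_\mu t_\mu=0$: writing $S_l=\sum_\mu e^{il\alpha_\mu}(e^{ilt_\mu}-1)$ extracts a leading piece $il\,\hat t_{-l\bmod n}$, and Parseval's identity $\sum_l|\hat t_l|^2=n\sum_\mu t_\mu^2\leq Cn$ converts termwise $\ell^\infty$ bounds into much stronger $\ell^2$ estimates. Splitting the $l$-sum into small $l$ (aliasing dominates, giving $O(n^{-3-\alpha})$), medium $l$ (Cauchy--Schwarz against $\hat t$ and the Fourier coefficients of $H_s$), and $l$ near multiples of $n$ (absorbed by the Grunsky decay \eqref{eq:coeffdecay} with $m=9$), one should arrive at $|R_n^s|\leq C(\gamma,\|G_s\|_{4,\alpha})$, by a calculation parallel to but more delicate than that for $T_n^s$ in Lemma \ref{lemma:Tn}.
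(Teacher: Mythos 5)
Your setup is correct and matches the paper: you substitute the integral equation in the form $G_s(\omega)=-\tfrac{\beta}{4\pi}\,\mathrm{p.v.}\!\int\cot\tfrac{\omega-\theta}{2}H_s(\theta)\,d\theta+\beta\,\Re\sum ika_{kl}(s)h_k(s)e^{-il\omega}$, observe that the Grunsky series cancels the first term of \eqref{eq:UTR}, and arrive at the Riemann-sum-error representation, which is exactly \eqref{eq:Rn}. Your Abel-regularized Fourier form $R_n^s=\beta\sum_{l\geq 1}\Im[S_l(R_l/n-h_l)]$ with $S_l=\sum_\mu e^{il\theta_\mu}$, $R_l=\sum_\nu e^{-il\theta_\nu}H_s(\theta_\nu)$, is also algebraically valid (one checks it by expanding $\cot$ and using the reality of $H_s$). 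The problem is what happens next.

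The gap is in the choice of decomposition: your form groups the sum by the index of $S_l$, which pushes the Fourier coefficients $h_k$ of $H_s$ into the inner factor $R_l/n-h_l$, where their decay is destroyed (for small $l$, $R_l/n-h_l$ is of size $l/\sqrt n$, independent of the smoothness of $H_s$). As you yourself observe, this makes the termwise bound $\sum_l|S_l||R_l/n-h_l|$ diverge like $n^{3/2}$, and you are then forced to conjure cancellation from $\Im[\cdot]$, aliasing, and Parseval. That sketch does not close: linearizing $S_l\approx il\hat t_l$ and $R_l-R_l^{\mathrm{equi}}\approx -il\widehat{(tH)}_l+\widehat{(tH')}_l$ and applying Cauchy--Schwarz to $\tfrac{1}{n}\sum_l l^2|\hat t_l||\widehat{(tH)}_l|$ produces a factor $\big(\sum_l l^4|\hat t_l|^2\big)^{1/2}$ which you have no control over — Lemma \ref{lemma:tmu} only controls $\sum_\mu t_\mu^2=\tfrac1n\sum_l|\hat t_l|^2$, not higher Sobolev norms of $(t_\mu)$. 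You would need a genuinely new idea to complete this route.

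The paper avoids all of this by an algebraic identity that keeps $h_k$ explicit: expand $H_s(\theta_\mu)-H_s(\theta_\nu)=\sum_{k\neq 0}h_k(e^{ik\theta_\mu}-e^{ik\theta_\nu})$ inside \eqref{eq:Rn} and use
\begin{equation*}
\cot\big(\tfrac{1}{2}(x-y)\big)\big(e^{\i kx}-e^{\i ky}\big)=\i\sgn(k)\Big(e^{\i kx}+e^{\i ky}+2\sum_{j=1}^{|k|-1}e^{\i\sgn(k)(jx+(|k|-j)y)}\Big),
\end{equation*}
which is a telescoping/geometric-sum identity and requires no regularization. This yields
\begin{equation*}
R_n^s(\theta)=\frac{\beta}{2}\sum_{k\in\Z}\i\,\sgn(k)\,h_k(s)\Big(\frac{|k|}{n}\sum_\mu e^{\i k\theta_\mu}+\frac{1}{n}\sum_{\mu,\nu}\sum_{j=1}^{|k|-1}e^{\i\sgn(k)(j\theta_\mu+(|k|-j)\theta_\nu)}\Big),
\end{equation*}
and now the estimate is entirely termwise: by \eqref{Sumthetanu} the first bracket is $O(k^2/\sqrt n)$, the double sum (factoring into two single sums and summing $j(|k|-j)$ over $j$) is $O(|k|^3)$, and the factor $|h_k(s)|\leq C\|H_s\|_{4,\alpha}|k|^{-4-\alpha}$ (this is where the $C^{4+\alpha}$ hypothesis enters) makes $\sum_k|h_k|(k^2/\sqrt n+|k|^3)$ converge. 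So the bound for $R_n^s$ is no harder than the one for $T_n^s$ — the "more delicate" estimate you anticipate is an artifact of the less favorable grouping.
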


\begin{remark}\label{rem:greg} 
This proposition is the origin of the assumption $g\in C^{4+\epsilon}$. It is possible to only assume $g\in C^{1+\epsilon}$, by following the techniques of proof of \cite{Joh88}, and prove the analogues of Lemma 1.2, 2.1, and 2.2 in \cite{Joh88}. We chose to present a shorter proof here for the sake of brevity and simplicity.
\end{remark}

\begin{proof}
It follows from \eqref{Grunsky} by differentiation that
\[
\Re\frac{\i e^{\i \theta}\phi'(e^{\i\theta})}{\phi(e^{\i\theta})-\phi(e^{\i\omega})}=-\frac 12\cot(\tfrac{1}{2}(\omega-\theta))+\Re\sum_{k,l\ge 1}\i ka_{kl}e^{-\i k\theta-\i l\omega}.
\]
and hence we can write the integral equation \eqref{inteq} as
\begin{align}\label{eq:G}
G_s(\omega) = -\frac{\beta}{2} p.v. \int_0^{2\pi} \cot(\tfrac{1}{2}(\omega-\theta))H_s(\theta)\frac{\d\theta}{2\pi}+\beta \Re \sum_{k,l\geq 1} \i ka_{kl} h_k(s)e^{-\i l \omega}
\end{align}
It follows that
\begin{equation}\label{eq:Rn}
    R_n^s(\theta) = -\frac{\beta}{2} \sum_\mu \Big[ \int_0^{2\pi} \cot(\tfrac{1}{2}(\theta_\mu-\theta))H_s(\theta)\frac{\d\theta}{2\pi}+\frac{1}{2n}\sum_{\nu:\nu\neq\mu} \cot(\tfrac{1}{2}(\theta_\mu-\theta_\nu))(H_s(\theta_\mu)-H_s(\theta_\nu)) \Big]. 
\end{equation}
We recognize the integral above as the Hilbert transform giving the conjugate function on $\T$, so 
\begin{equation}\label{conj}
\int_0^{2\pi} \cot(\tfrac{1}{2}(\theta_\mu-\theta))H_s(\theta) \frac{\d\theta}{2\pi} = \sum_{k\in\Z}-\i \sgn(k) h_k(s)e^{\i k\theta_\mu}.
\end{equation}
Note that for $k\ge 1$
\[
 \cot(\tfrac{1}{2}(x-y))(e^{\i kx}-e^{\i ky})=\i(e^{\i x}+e^{\i y})\sum_{j=0}^{k-1}e^{\i(jx+(k-1-j)y}=\i\Big(e^{\i kx}+e^{\i ky}+2\sum_{j=1}^{k-1}e^{\i(jx+(k-j)y}\Big)
 \]
Changing the sign of $k,x$ and $y$, we see that if $|k|\ge 1$, then
\[
 \cot(\tfrac{1}{2}(x-y))(e^{\i kx}-e^{\i ky})=\i\sgn(k)\Big(e^{\i kx}+e^{\i ky}+2\sum_{j=1}^{|k|-1}e^{\i\sgn(k)(jx+(|k|-j)y)}\Big).
 \]
If we use this and \eqref{conj}, we obtain
\[
R_n^s(\theta)=\frac \beta 2\sum_{k\in\Z}\i\sgn(k)h_k(s)\Big(\frac{|k|}n\sum_\mu e^{\i k\theta_\mu}+\frac 1n\sum_{\mu,\nu}\sum_{j=1}^{|k|-1}e^{\i\sgn(k)(j\theta_\mu+(|k|-j)\theta_\nu)}\Big).
\]
We can now use \eqref{Sumthetanu} to see that if $\theta\in E_{n,s}$ then
\[
|R_n^s(\theta)|\le C(\gamma,\|G_s\|_{4,\alpha})\sum_{k\in\Z}|h_k(s)|\big(\frac{k^2}{\sqrt{n}}+|k|^3\big).
\]
Here we can use \eqref{Hsbound} and a standard bound of Fourier coefficients to obtain the estimate
\[
|h_k(s)|\le C\|H_s\|_{4,\alpha}\frac 1{|k|^{4+\alpha}}\le C(\gamma,\|G_s\|_{4,\alpha})\frac 1{|k|^{4+\alpha}}.
\]
From the last two estimates we can now conclude that $|R_n^s(\theta)| \leq C(\gamma, \|G_s\|_{4,\alpha})$.
\end{proof}

\begin{remark}\label{rem:inteq}
Equations \eqref{eq:G} and \eqref{eq:Rn} are the reason for picking $h$ to be the solution of the integral equation \eqref{inteq}. Thanks to this choice we see that the first two terms of $R_n^s$ in \eqref{eq:UTR} almost cancel the third one.
\end{remark}

Combining the estimates \eqref{Snest}, \eqref{Unest}, \eqref{IntEnsest}, Lemma \ref{lemma:Tn}, and Lemma \ref{lemma:Rn}, we see that we have proved the bound \eqref{expbound}
in Theorem \ref{smainthm}. It remains to prove the limit \eqref{eq:smain} for each fixed $s$. 

\subsection{Computing the limit}

We will use the following simple lemma.
\begin{lemma} \label{lem:limit}
\cite{Sim04}
Let $E_{n,s}$ be the set in Lemma \ref{lemma:En} with the choice of $K$ above. 
Assume that there is a constant $C_0=C(\gamma, G)$ such that $\sup_n \| X_n \mathbbm{1}_{E_{n,s}}\|_\infty \le C_0$, and that $\lim_{n\to\infty} \E_n^{\beta,s}[|X_n|\mathbbm{1}_{E_{n,s}}] =0$. Then 
\[ \lim_{n\to\infty} \E_n^{\beta,s}[e^{X_n}\mathbbm{1}_{E_{n,s}}] = 1.\]
\end{lemma}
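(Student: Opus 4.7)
The plan is to split the quantity of interest as
\[
\E_n^{\beta,s}[e^{X_n}\mathbbm{1}_{E_{n,s}}]=\E_n^{\beta,s}[\mathbbm{1}_{E_{n,s}}]+\E_n^{\beta,s}[(e^{X_n}-1)\mathbbm{1}_{E_{n,s}}],
\]
and show that the first term tends to $1$ while the second tends to $0$. This reduces the problem to two independent elementary estimates.

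For the first term I would invoke Lemma \ref{lemma:En} with the trivial choice $X_n\equiv 0$. Since the constant $K$ used to define $E_{n,s}$ was fixed with $K=C(\gamma)+C_1(\gamma,\|G_s\|_{4,\alpha})+1>C(\gamma)$, the bound \eqref{Enest} gives $\E_n^{\beta,s}[\mathbbm{1}_{E_{n,s}^c}]\le e^{(C(\gamma)-K)n}\to 0$, so $\E_n^{\beta,s}[\mathbbm{1}_{E_{n,s}}]\to 1$.

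For the second term I would use the elementary inequality $|e^x-1|\le |x|e^{|x|}$, valid for all real $x$, together with the uniform bound $\|X_n\mathbbm{1}_{E_{n,s}}\|_\infty\le C_0$. This yields
\[
\big|\E_n^{\beta,s}[(e^{X_n}-1)\mathbbm{1}_{E_{n,s}}]\big|\le e^{C_0}\,\E_n^{\beta,s}[|X_n|\mathbbm{1}_{E_{n,s}}],
\]
and the right-hand side tends to $0$ by the second hypothesis of the lemma. Combining the two parts gives the claim.

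Since both ingredients are immediate from the hypotheses and Lemma \ref{lemma:En}, there is really no obstacle here; the only thing to be a little careful about is that the uniform bound on $X_n\mathbbm{1}_{E_{n,s}}$ (not on $X_n$ itself) suffices, which is why introducing the indicator before applying $|e^x-1|\le |x|e^{|x|}$ is essential. The statement is a convenient dominated-convergence-type packaging tailored to feed into the estimates derived for $R_n^s+T_n^s+U_n^s+S_n^s$ in the preceding subsections.
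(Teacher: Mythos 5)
Your proof is correct and follows essentially the same route as the paper: decompose $\E_n^{\beta,s}[e^{X_n}\mathbbm{1}_{E_{n,s}}]$ into the contribution of $\mathbbm{1}_{E_{n,s}}$ (handled by Lemma \ref{lemma:En}) and that of $(e^{X_n}-1)\mathbbm{1}_{E_{n,s}}$, bounded via $|e^x-1|\le|x|e^{|x|}$ and the uniform bound $C_0$.
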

\begin{proof}
By Lemma \ref{lemma:En}, $\lim_{n\to\infty} \mathbb{P}_n^{\beta,s}[E_{n,s}^c] =0$, so
\[ \lim_{n\to\infty}|\E_n^{\beta,s}[e^{X_n}\mathbbm{1}_{E_{n,s}}]-1| \leq \lim_{n\to\infty}| \E_n^{\beta,s}[(e^{X_n}-1)\mathbbm{1}_{E_{n,s}}]|+\mathbb{P}_n^{\beta,s}[E_{n,s}^c]| = \lim_{n\to\infty} | \E_n^{\beta,s}[(e^{X_n}-1)\mathbbm{1}_{E_{n,s}}]|.\]
But
\[| \E_n^{\beta,s}[(e^{X_n}-1)\mathbbm{1}_{E_{n,s}}]| \leq \E_n^{\beta,s}[|X_n|e^{|X_n|}\mathbbm{1}_{E_{n,s}}] \leq e^{C_0}\E_n^{\beta,s}[|X_n|\mathbbm{1}_{E_n}],\]
which goes to zero as $n\to\infty$ by assumption.
\end{proof}

Let
\begin{align}\label{us}
u_s(\theta,\omega)&=-G_s'(\theta)H_s(\theta)-(1-\tfrac \beta 2)L_s'(\theta)H(s)-H_s'(\theta)-\frac{\beta}{16}\frac{(H_s(\theta)-H_s(\omega))^2}{\sin^2\frac{\theta-\omega}2}\notag\\
&+\frac{\beta}2\Re\sum_{k,l\ge 1}kla_{kl}(s)e^{-\i k\theta-\i l\omega}H_s(\theta)H_s(\omega)+\beta \Re\sum_{k,l\ge 1}k^2a_{kl}(s)e^{-\i k\theta-\i l\omega}H_s(\theta)^2,
\end{align}
and let
\[
d\lambda_{n,\theta}(t)=\frac 1n\sum_\mu\delta(t-\theta_\mu),
\]
be the empirical measure. Note that
\[
U_n^s(\theta)=\int_{[0,2\pi]^2}u_s(x_1, x_2)\, d\lambda_{n,\theta}(x_1) d\lambda_{n,\theta}(x_2),
\]
and write
\begin{equation}\label{As}
A_s[G_s]:=\frac 1{4\pi^2}\int_{[0,2\pi]^2}u_s(x_1, x_2)\,dx_1dx_2.
\end{equation}
It follows from Lemma \ref{lemma:suptmu} and the bound \eqref{Unest} that
\begin{equation}\label{Unslim}
\E_n^{\beta,s}\big[|U_n^s(\theta)-A_s[G_s]|\mathbbm{1}_{E_{n,s}}\big]\to 0
\end{equation}
as $n\to\infty$.

The next lemma gives the analogous result for $R_n^s$ and $T_n^s$.

\begin{lemma} We have the limit
\[\lim_{n\to\infty} \E_n^{\beta,s}[(|T_n^s|+|R_n^s|)\mathbbm{1}_{E_{n,s}}] = 0  \]
\end{lemma}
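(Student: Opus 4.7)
The approach is to express $T_n^s$ and $R_n^s$ via the Fourier-side decompositions already used in the proofs of Lemmas \ref{lemma:Tn} and \ref{lemma:Rn}, and then combine the uniform bounds established on $E_{n,s}$ with a fluctuation estimate for the elementary linear statistics $S_l^{(n)}(\theta):=\sum_\mu e^{\i l\theta_\mu}$ under $\E_n^{\beta,s}$. The crucial auxiliary ingredient is a C$\beta$E-type variance bound
\[
\E_n^{\beta,s}\big[|S_l^{(n)}|^2\,\mathbbm{1}_{E_{n,s}}\big]\le C(\gamma,\|G_s\|_{4,\alpha})\,|l|,\qquad l\in\Z\setminus\{0\},
\]
and in particular, by Cauchy--Schwarz, $\E_n^{\beta,s}[|S_l^{(n)}|\mathbbm{1}_{E_{n,s}}]\le C\sqrt{|l|}$. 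Once this is in hand, the proofs of both statements reduce to summation arguments exploiting the rapid decay of the Grunsky coefficients from \eqref{eq:coeffdecay} (used with $m=9$) and of the Fourier coefficients of $H_s$ from \eqref{Hsbound}.

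For $T_n^s$, I would use the estimate \eqref{eq:Tn2} together with \eqref{hkest} to get, on $E_{n,s}$,
\[
\Big|h_k(s)-\tfrac{1}{n}\sum_\mu e^{-\i k\theta_\mu}H_s(\theta_\mu)\Big|\le C(\gamma,\|G_s\|_{4,\alpha})\frac{k}{\sqrt n}.
\]
Substituting into the definition of $T_n^s$ in \eqref{eq:UTR}, pairing this pointwise bound with the expectation bound on $|S_l^{(n)}|$, and summing, yields
\[
\E_n^{\beta,s}\big[|T_n^s|\mathbbm{1}_{E_{n,s}}\big]\le\frac{C(\gamma,\|G_s\|_{4,\alpha})}{\sqrt n}\sum_{k,l\ge 1}|a_{kl}(s)|\,k^2\sqrt l = O(n^{-1/2}),
\]
where the convergence of the double sum is guaranteed by \eqref{eq:coeffdecay} (take, say, $p=q=4$).

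For $R_n^s$ the plan is to start from the identity obtained in the proof of Lemma \ref{lemma:Rn},
\[
R_n^s(\theta)=\frac{\beta}{2}\sum_{k\in\Z}\i\,\sgn(k)\,h_k(s)\left[\frac{|k|}{n}\,S_k^{(n)}+\frac{1}{n}\sum_{j=1}^{|k|-1}S_{\sgn(k)j}^{(n)}\,S_{\sgn(k)(|k|-j)}^{(n)}\right],
\]
and apply Cauchy--Schwarz to the pairwise products, $\E_n^{\beta,s}[|S_j^{(n)}S_{|k|-j}^{(n)}|\mathbbm{1}_{E_{n,s}}]\le C\sqrt{j(|k|-j)}$. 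Combined with the rapid decay $|h_k(s)|\le C(\gamma,\|G_s\|_{4,\alpha})|k|^{-4-\alpha}$ from \eqref{Hsbound}, this gives
\[
\E_n^{\beta,s}\big[|R_n^s|\mathbbm{1}_{E_{n,s}}\big]\le C(\gamma,\|G_s\|_{4,\alpha})\sum_{k\ne 0}\frac{1}{|k|^{4+\alpha}}\Big(\frac{|k|^{3/2}}{n}+\frac{|k|^2}{n}\Big)=O(n^{-1}).
\]

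The main obstacle is the variance bound on $S_l^{(n)}$: the purely pointwise bound coming from Lemma \ref{lemma:tmu} gives only $|S_l^{(n)}|\le C|l|\sqrt n$ on $E_{n,s}$, which is off by precisely $\sqrt n$ and would lead to an $O(1)$ bound rather than $o(1)$. To reach the sharp $\sqrt{|l|}$ fluctuation scale I would either run an integration-by-parts / loop-equation argument tailored to the weight $\exp\big(\tfrac{\beta}{2}F_n^s+(1-\tfrac{\beta}{2})\sum_\mu\log|\phi_s'(e^{\i\theta_\mu})|\big)$, or else transfer the classical C$\beta$E variance estimate via a bounded-perturbation comparison, using that the non-Vandermonde piece of $F_n^s$ is uniformly $O(1)$ on $E_{n,s}$ by \eqref{eq:ens} and that $\log|\phi_s'|$ is uniformly bounded by \eqref{Lsbound}.
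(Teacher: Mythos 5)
Your decomposition of $T_n^s$ and $R_n^s$ is the same one the paper uses, and you correctly identify the crux: the pointwise estimate $|S_l^{(n)}|\le Cl\sqrt n$ on $E_{n,s}$ alone gives only $O(1)$, and some genuine expectation estimate on $|S_l^{(n)}|$ is needed. But the fix you propose rests on an estimate that is not available and that you do not prove.

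The variance bound $\E_n^{\beta,s}\big[|S_l^{(n)}|^2\mathbbm{1}_{E_{n,s}}\big]\le C|l|$ is a sharp, quantitative fluctuation estimate at the C$\beta$E rate. Nothing established up to this point in the paper gives it. Your first suggested route (a loop-equation argument) is an entirely separate project. Your second route, a bounded-perturbation comparison with C$\beta$E, is based on a misreading: \eqref{eq:ens} is derived in the proof of Lemma~\ref{lemma:En} under the restriction $\sup_\mu|t_\mu|\le 1/n$, \emph{not} for general $\theta\in E_{n,s}$. On $E_{n,s}$ one only has $\sum_\mu t_\mu^2\le C$ (Lemma~\ref{lemma:tmu}), which via \eqref{Sumthetanu} makes the non-Vandermonde piece of $F_n^s$ of size $O(n)$, not $O(1)$. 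So the deformed measure is not a bounded perturbation of C$\beta$E on $E_{n,s}$, and the comparison does not transfer the variance. (It is also worth noting that the soft argument the paper does have access to at this stage — applying Cauchy estimates to $\zeta\mapsto\E_n^{\beta,s}[e^{\zeta\sum_\mu\cos m\theta_\mu}]$, as in Lemma~\ref{lemma:uniformbounds} — gives $\E[|S_m|^2]$ controlled by $C(\gamma,m)$ with no growth control in $m$, nowhere near your $O(|l|)$.)

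The paper's actual proof avoids needing any quantitative variance at all. It takes $G_s(\theta)=\cos m\theta$ (and $\sin m\theta$), invokes the uniform exponential-moment bound \eqref{expbound} — which has already been established at this point via Lemmas~\ref{lemma:Tn} and~\ref{lemma:Rn} — and then Jensen's inequality produces $\E_n^{\beta,s}\big[|S_m^{(n)}|\big]\le f(m)$ for \emph{some} strictly increasing function $f$ whose growth in $m$ is completely uncontrolled. This weak bound is then combined with the pointwise bound $|S_l^{(n)}|\le C l\sqrt n$ on $E_{n,s}$ by splitting the $l$-sum at $\floor{f^{-1}(n^{1/4})}$: for $l$ below the cutoff the expectation is $\le n^{1/4}$ and the $1/\sqrt n$ prefactor wins, while for $l$ above the cutoff the fast decay of the Grunsky coefficients in \eqref{eq:coeffdecay} kills the tail as the cutoff $\to\infty$. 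The price paid is that no rate of convergence is obtained (unlike your putative $O(n^{-1/2})$), but no hard moment estimate is required either. If you want to salvage your quantitative approach you would need to actually prove the $O(|l|)$ variance bound for the $\gamma_s$-ensemble, which is a substantial additional argument not contained in, or easily derivable from, the material preceding this lemma.
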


\begin{proof}
We can choose $g_s$ on $\gamma_s$ so that $G_s(\theta)=\cos m\theta$. It follows from \eqref{expbound} that there is a constant $C(\gamma,m)$ so that    
\[
\E_n^{\beta,s}\Big[\exp\big(\sum_\mu\cos m\theta_\mu\big)\Big]\le C(\gamma,m)
\]
for all $n\ge 1$. Jensen's inequality now gives the bound
\[ 
\exp \Big( \E_n^{\beta,s}\Big[\Big|\sum_\mu \cos( m\theta_\mu)\Big|\Big] \Big) \le \E_n^{\beta,s}\Big[\exp\big(\sum_\mu\cos m\theta_\mu\big)\Big]+
\E_n^{\beta,s}\Big[\exp\big(-\sum_\mu\cos m\theta_\mu\big)\Big]\le 2C(\gamma,m),
\]
and the same bound holds for $\E_n^{\beta,s}[|\sum_\mu \sin(m\theta_\mu)|]$.
Hence there exists a strictly increasing function $f$ on $\R$ such that
\[ \E_n^{\beta,s}\Big[\Big|\sum_\mu e^{\i m\theta_\mu} \Big|\Big] \leq f(m)\]
for $m\ge 1$. From \eqref{Sumthetanu}, we also have the estimate
\[ 
\E_n^{\beta,s}\Big[\Big|\sum_\mu e^{\i m\theta_\mu} \Big|\mathbbm{1}_{E_{n,s}}\Big] \leq C(\gamma,\|G_s\|_{4,\alpha})m\sqrt{n},
\]
where $G_s$ is now the function in Theorem \ref{smainthm}.
These two bounds combined with \eqref{eq:coeffdecay}, \eqref{hkest} and \eqref{eq:Tn2} give
\begin{align*}
\E_n^{\beta,s}\Big[|T_n(\theta)| \mathbbm{1}_{E_n}\Big]  &\leq \sum_{k,l\geq 1} k|a_{kl}(s)| \E_n^{\beta,s}\Big[ \Big| h_k(s)-\frac{1}{n}\sum_{\mu=1}^n e^{-\i k\theta_\mu}H_s(\theta_\mu)\Big| \Big|\sum_{\nu=1}^n e^{-\i l\theta_\nu}\Big| \mathbbm{1}_{E_{n,s}}\Big]\\
&\leq \frac{C(\gamma,\|G_s\|_{4,\alpha}}{\sqrt{n}} \sum_{k,l\geq 1} k^2|a_{kl}| \E_n^{\beta,s}\Big[ \Big|\sum_{\nu=1}^n e^{-\i l\theta_\nu}\Big| \mathbbm{1}_{E_{n,s}}\Big]\\
&\leq\frac{C(\gamma,\|G_s\|_{4,\alpha}}{\sqrt{n}} \sum_{k\geq 1} k^2 \Big( \sum_{l=1}^{\floor{f^{-1}(n^{1/4})}} |a_{kl}|f(l) + \sqrt{n}\sum_{l> \floor{f^{-1}(n^{1/4})}} l|a_{kl}|\Big)  \\
&\le \frac{C(\gamma,\|G_s\|_{4,\alpha}}{\sqrt{n}} \sum_{k\geq 1}\frac 1{k^2}\Big(n^{1/4}\sum_{l=1}^\infty\frac 1{l^3}+\sqrt{n}\sum_{l> \floor{f^{-1}(n^{1/4})}} \frac 1{l^2}\Big)\\
&\le C(\gamma,\|G_s\|_{4,\alpha}\Big(\frac 1{n^{1/4}}+\frac 1{\floor{f^{-1}(n^{1/4})}}\Big).
\end{align*}
But since $f$ is strictly increasing so is $f^{-1}$, which is unbounded, which shows that $\E_n^{\beta,s}\Big[|T_n^s(\theta)| \mathbbm{1}_{E_{n,s}}\Big] = o(1)$ as $n\to\infty$.
The proof of $\E_n^{\beta,s}\Big[|R_n^s(\theta)| \mathbbm{1}_{E_{n,s}}\Big] = o(1)$ is similar.

\end{proof}

We now see that by \eqref{Snest}, \eqref{Unest}, Lemma \ref{lemma:Tn}, Lemma \ref{lemma:Rn} and Lemma \ref{lem:limit}
\[
\lim_{n\to\infty}\E_n^{\beta,s}\big[e^{(R_n^s+T_n^s+U_n^s+S_n^s)(\theta)}\mathbbm{1}_{E_{n,s}}\big]=e^{A_s[G_s]}.
\]
If we combine this with \eqref{IntEnsest}, we have proved
\begin{equation}\label{Aslim}
\lim_{n\to\infty}\E_n^{\beta,s}\big[\exp\Big(\sum_\mu G_s(\theta_\mu)\Big)\Big]=e^{A_s[G_s]}.
\end{equation}
It remains to show that this agrees with the formula for the limit in \eqref{eq:smain}. From \eqref{us} and \eqref{As}, we find 
\begin{align*}
A_s[G_s]&=-\frac 1{2\pi}\int_0^{2\pi}G'_s(\theta)H_s(\theta)\,d\theta-(1-\tfrac\beta 2)\frac 1{2\pi}\int_0^{2\pi}L_s'(\theta)H_s(\theta)\,d\theta\\
&-\frac{\beta}{64\pi^2}\int_0^{2\pi}\int_0^{2\pi}\frac{(H_s(\theta)-H_s(\omega))^2}{\sin^2\frac{\theta-\omega}2}\,d\theta d\omega+\frac \beta 2\Re\sum_{k,l\ge 1}kla_{kl}(s)h_k(s)h_l(s).
\end{align*}
An integration by parts gives
\begin{align}\label{Den}
-\frac{\beta}{64\pi^2}\int_0^{2\pi}\int_0^{2\pi}\frac{(H_s(\theta)-H_s(\omega))^2}{\sin^2\frac{\theta-\omega}2}\,d\theta d\omega&=
\frac{\beta}{16\pi^2}\int_0^{2\pi}\int_0^{2\pi}H_s'(\theta)(H_s(\theta)-H_s(\omega))\cot\frac{\theta-\omega}2\,d\theta d\omega\notag\\
&=-\frac{\beta}{8\pi^2}\mathrm{p.v.}\int_0^{2\pi}\int_0^{2\pi}H_s'(\theta)H_s(\omega)\frac{\partial}{\partial\omega}\log|e^{\i\theta}-e^{\i\omega}|\,d\theta d\omega
\end{align}
since $\mathrm{p.v.}\int_0^{2\pi}\cot\frac{\theta-\omega}2\,d\omega=0$. Also, by \eqref{Grunsky}
\begin{align}\label{Den2}
\frac \beta 2\Re\sum_{k,l\ge 1}kla_{kl}(s)h_k(s)h_l(s)&=\frac{\beta}{8\pi^2}\int_0^{2\pi}\int_0^{2\pi}H_s'(\theta)H'_s(\omega)\log\Big|\frac{\phi(e^{\i\theta})-\phi(e^{\i\omega})}
{e^{\i\theta}-e^{\i\omega}}\Big|\,d\theta d\omega\notag\\
&=-\frac{\beta}{8\pi^2}\int_0^{2\pi}\int_0^{2\pi}H_s'(\theta)H_s(\omega)\frac{\partial}{\partial\omega}\log\Big|\frac{\phi(e^{\i\theta})-\phi(e^{\i\omega})}
{e^{\i\theta}-e^{\i\omega}}\Big|\,d\theta d\omega.
\end{align}
Added together, \eqref{Den} and \eqref{Den2} give
\[
-\frac{\beta}{8\pi^2}\Re\int_0^{2\pi}\int_0^{2\pi}H_s'(\theta)H_s(\omega)\frac{\i e^{\i\theta}\phi'(e^{\i\theta})}{\phi(e^{\i\theta})-\phi(e^{\i\omega})}\,d\theta d\omega=
-\frac 1{4\pi}\int_0^{2\pi}G_s(\theta)H_s'(\theta)\,d\theta
\]
by \eqref{inteq}.
Thus
\[
A_s[G_s]=\frac 1{4\pi}\int_0^{2\pi}G_s(\theta)H_s'(\theta)\,d\theta+2(1-\tfrac\beta 2)\frac 1{4\pi}\int_0^{2\pi}\log|\phi'(e^{\i\theta})|H_s'(\theta)\,d\theta.
\]
Combining this formula with Lemma \ref{lem:gvar} and \eqref{Aslim}, we see that \eqref{eq:smain} follows and we have proved Theorem \ref{smainthm}, and hence also Theorem \ref{mainthm}.

\section{The partition function}\label{sec:part}

In this section we obtain the asymptotics of the partition function, 
\[ Z_{n,\beta}(\gamma)= D_n^\beta[1] = \frac{1}{n!} \int_{\gamma^n} \prod_{\mu\neq\nu}|z_\mu-z_\nu|^{\beta/2} |\d^n z|. \]
The case $\gamma=\T$ is a well-known Selberg integral and the partition function is given in \eqref{Selberg}. We will now prove the following proposition which gives the asymptotics of
the partition function for sufficiently regular curves.

\begin{prop}\label{prop:partition}
    Let $\gamma$ be a $C^{12+\alpha}$ Jordan curve, for some $\alpha>0$. Then
    \[\lim_{n\to\infty} \frac{Z_{n,\beta}(\gamma)}{Z_{n,\beta}(\T)\mathrm{cap}(\gamma)^{\beta n^2/2+(1-\beta/2)n}} = \frac{\exp \Big(\tfrac{2}{\beta}(1-\tfrac{\beta}{2})^2\dd^t(I+K)^{-1}\dd  \Big)}{\sqrt{\det(I+K)}}\]
    where $\dd$ and $K$ are given by \eqref{eq:K} and \eqref{eq:d}.
\end{prop}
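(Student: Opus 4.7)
The plan is to interpolate from $\gamma$ to $\T$ using the one-parameter family $\gamma_s=\phi_s(\T)$, $\phi_s(z)=s\phi(z/s)$, $s\in[0,1]$, introduced above: $\gamma_0=\T$, $\gamma_1=\gamma$, and $\mathrm{cap}(\gamma_s)=1$ throughout. Setting $P_n(s):=\log Z_{n,\beta}(\gamma_s)=\log D_n^{\beta,s}[1]$, I would write
\[
\log\frac{Z_{n,\beta}(\gamma)}{Z_{n,\beta}(\T)}=\int_0^1 P_n'(s)\,\d s,
\]
then evaluate $\lim_{n\to\infty}P_n'(s)$ for each $s\in[0,1]$ and integrate. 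By \eqref{Grunskys} and \eqref{Grunsky2}, the density of $\E_n^{\beta,s}$ relative to the C$\beta$E density on $\T$ has the explicit exponent
\[
(1-\tfrac{\beta}{2})\sum_\mu\log|\phi_s'(e^{\i\theta_\mu})|-\tfrac{\beta}{2}\Re\sum_{k,l\ge1}s^{k+l}a_{kl}T_k T_l,\qquad T_k(\theta):=\sum_\mu e^{-\i k\theta_\mu},
\]
so that $P_n'(s)=\E_n^{\beta,s}[L_n(s)+B_n(s)]$, where $L_n$ is a linear statistic and $B_n$ a bilinear statistic in the $T_k$.

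To compute the limits of these expectations I would apply Theorem \ref{smainthm} with the complex test function $G_s(\theta)=\epsilon_1 e^{-\i k\theta}+\epsilon_2 e^{-\i l\theta}$ (admissible by Lemma \ref{lemma:real}) and differentiate twice at $\epsilon=0$. Since the limit in Theorem \ref{smainthm} is $\exp(A(\g))$ with $A$ affine-quadratic in $\g$, and the uniform bound \eqref{expbound} provides the analytic domination, a standard Vitali/Montel argument yields
\[
\lim_{n\to\infty}\E_n^{\beta,s}[T_k]=\mu_k(s),\qquad \lim_{n\to\infty}\E_n^{\beta,s}[T_k T_l]=\mu_k(s)\mu_l(s)+C_{kl}(s),
\]
with $\mu_k(s)=2(1-\tfrac{2}{\beta})\dd(s)^t(I+K(s))^{-1}\g_k$ and $C_{kl}(s)=\tfrac{4}{\beta}\g_k^t(I+K(s))^{-1}\g_l$, where $\g_k=\tfrac{\sqrt{k}}{2}(\mathbf{e}_k,-\i\mathbf{e}_k)^t$ is the Fourier vector of $e^{-\i k\theta}$. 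Interchanging $\lim_n$ with the $k,l$-sums is justified by combining the Grunsky decay \eqref{eq:coeffdecay} (with $p+q=11$, from $C^{12+\alpha}$) with uniform second-moment bounds $\E_n^{\beta,s}|T_k|^2=O(k)$ obtained from Theorem \ref{smainthm} applied to $\epsilon\cos k\theta$ and $\epsilon\sin k\theta$.

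Plugging in, $\lim_n P_n'(s)$ decomposes into three pieces. The linear contribution and the product-of-means piece of the bilinear contribution are both quadratic in $\dd(s)$; using the identity $(1-\tfrac{\beta}{2})(1-\tfrac{2}{\beta})=-\tfrac{2}{\beta}(1-\tfrac{\beta}{2})^2$ together with
\[
\tfrac{d}{ds}\bigl[\dd(s)^t(I+K(s))^{-1}\dd(s)\bigr]=2\dd'(s)^t(I+K(s))^{-1}\dd(s)-\dd(s)^t(I+K(s))^{-1}K'(s)(I+K(s))^{-1}\dd(s),
\]
these combine into $\tfrac{d}{ds}\bigl[\tfrac{2}{\beta}(1-\tfrac{\beta}{2})^2\dd(s)^t(I+K(s))^{-1}\dd(s)\bigr]$. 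The pure-covariance piece $-\beta\Re\sum_{k,l}(k+l)s^{k+l-1}a_{kl}\g_k^t(I+K(s))^{-1}\g_l$ is identified, through the block form of $K(s)$ relative to the basis $(\mathbf{e}_k^{(1)},\mathbf{e}_k^{(2)})$, as $-\tfrac{1}{2}\mathrm{tr}\bigl[(I+K(s))^{-1}K'(s)\bigr]=-\tfrac{1}{2}\tfrac{d}{ds}\log\det(I+K(s))$. Integrating from $0$ to $1$ with the boundary conditions $K(0)=0$ and $\dd(0)=0$ produces the claimed formula.

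The main obstacle is the combination of two technical steps: (a) justifying the interchange of $\lim_n$ with the infinite $k,l$-sums in the bilinear term, which is precisely what forces the stronger regularity $C^{12+\alpha}$ (versus $C^{9+\alpha}$ in Theorem \ref{smainthm}), since one needs $\sum_{k,l}(k+l)\sqrt{kl}|a_{kl}|$ to converge comfortably after a Cauchy--Schwarz bound on $\E|T_kT_l|$; and (b) the linear-algebraic bookkeeping that identifies the pure-covariance piece with $\tfrac{d}{ds}\log\det(I+K(s))$ via the explicit matrix realization of the Grunsky kernel in $K(s)$.
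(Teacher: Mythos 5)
Your proposal follows essentially the same route as the paper: interpolate along $\gamma_s=\phi_s(\T)$, write $\log(Z_{n,\beta}(\gamma)/Z_{n,\beta}(\T))=\int_0^1 P_n'(s)\,\d s$ where $P_n'(s)=\E_n^{\beta,s}[\text{linear}+\text{bilinear}]$, compute the limiting first and second moments by differentiating the limiting Laplace transform from Theorem \ref{smainthm} (with Vitali/Montel domination supplied by \eqref{expbound}), justify the interchange of limit and the $k,l$-sums via dominated convergence using Grunsky decay, recognize the resulting expression as $\frac{\d}{\d s}\big(-\tfrac12\log\det(I+K(s))+\tfrac{2}{\beta}(1-\tfrac{\beta}{2})^2\dd(s)^t(I+K(s))^{-1}\dd(s)\big)$, and integrate with $K(0)=0$, $\dd(0)=0$. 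The only presentational difference is that you work with the complex Fourier sums $T_k=\sum_\mu e^{-\i k\theta_\mu}$ rather than the real pair $(\mathbf{X},\mathbf{Y})$; this is equivalent.

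One detail is over-claimed: the uniform-in-$n$ bound $\E_n^{\beta,s}|T_k|^2=O(k)$ does not follow from Theorem \ref{smainthm}. The bound \eqref{expbound} is $C(\gamma,\|G_s\|_{4,\alpha})$, and the $\|\cdot\|_{4,\alpha}$-norm of $\epsilon\cos(k\theta)$ grows like $k^{4+\alpha}$, so the constant is not controlled. The paper's Lemma \ref{lemma:uniformbounds} therefore applies the theorem to the rescaled test functions $\cos(k\theta)/k^{4+\epsilon}$, whose $C^{4+\epsilon}$-norm is $O(1)$ uniformly in $k$, and this yields only $\E_n^{\beta,s}|T_k|^2\le C(\gamma)k^{8+2\epsilon}$ uniformly in $n,s$. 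It is precisely this weaker polynomial bound—absorbed by the $p+q=11$ Grunsky decay \eqref{eq:coeffdecay2} from the $C^{12+\alpha}$ hypothesis—that makes the dominated-convergence step close, which you do correctly identify as the source of the stronger regularity requirement. With that correction, your proof matches the paper's.
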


This proposition together with Theorem \ref{mainthm} proves Theorem \ref{mainthm2}. In this section we will use \eqref{eq:coeffdecay} with $m=12$, i.e.~
\begin{equation}\label{eq:coeffdecay2}
    |a_{kl}|\leq A k^{-p-\epsilon}l^{-q-\epsilon}, \quad p+q=11,
\end{equation} 
for all $k,l\ge 1$.

Introduce the vectors
\begin{align*}
&\mathbf{X} = \Big(\frac{1}{\sqrt{k}}\sum_\mu \cos(k\theta_\mu)\Big)_{k\geq 1}, \quad \mathbf{Y} = \Big(\frac{1}{\sqrt{k}}\sum_\mu \sin(k\theta_\mu)\Big)_{k\geq 1}.
\end{align*}
Now, by the change of variables $z_{\mu}= \phi_s(e^{\i \theta_{\mu}})$,
\begin{align*}
Z_{n,\beta}(\gamma_s) 
&= \frac{1}{n!} \int_{[-\pi,\pi]^n} \prod_{\mu\neq\nu}|e^{\i\theta_\mu}-e^{\i\theta_\nu}|^{\beta/2} \exp\Big( \beta\sum_{1\leq \mu <\nu\leq n} \log \Big| \frac{\phi_s(e^{\i \theta_\mu})-\phi_s(e^{\i \theta_\nu})}{e^{\i\theta_\mu}-e^{\i\theta_\nu}} \Big| \\& \hskip 9.5cm + \sum_\mu \log|\phi_s'(e^{\i \theta_\mu})| \Big)\d^n\theta \\
&= \frac{1}{n!} \int_{[-\pi,\pi]^n} \prod_{\mu\neq\nu}|e^{\i\theta_\mu}-e^{\i\theta_\nu}|^{\beta/2} \exp\Big(-\frac{\beta}{2} \Re \sum_{k,l\geq 1} a_{kl}(s)\sum_\mu e^{-\i k\theta_\mu}\sum_\nu e^{-\i l\theta_\nu} \\& \hskip 9.5cm + \sum_\mu (1-\tfrac{\beta}{2})\log|\phi_s'(e^{\i \theta_\mu})| \Big)\d^n\theta 
\end{align*}
by \eqref{Grunskys}. Just as in \cite{Joh22} we can use \eqref{KGrunskyrel} and \eqref{logphiprime} and rewrite this as
\begin{equation*}
Z_{n,\beta}(\gamma_s)  = \frac{1}{n!} \int_{[-\pi,\pi]^n} \prod_{\mu\neq\nu}|e^{\i\theta_\mu}-e^{\i\theta_\nu}|^{\beta/2} \exp\Big(-\frac{\beta}{2} \begin{pmatrix}
        \mathbf{X}\\ \mathbf{Y}
    \end{pmatrix}^t K(s) \begin{pmatrix}
        \mathbf{X}\\ \mathbf{Y}
    \end{pmatrix} -2 \Big(1-\frac{\beta}{2}\Big) \dd(s)^t \begin{pmatrix}
        \mathbf{X}\\ \mathbf{Y}
    \end{pmatrix} \Big)\d^n\theta.
\end{equation*}
Differentiation gives the formula
\begin{align*}
    \frac{\d}{\d s} \log\Big(Z_n(\gamma_s)\Big) &= \frac{1}{Z_n(\gamma_s)n!} \int_{[-\pi,\pi]^n}\Big[- \tfrac{\beta}{2}\begin{pmatrix}
        \mathbf{X}\\ \mathbf{Y}
    \end{pmatrix}^t K'(s) \begin{pmatrix}
        \mathbf{X}\\ \mathbf{Y}
    \end{pmatrix} -2 \big(1-\tfrac{\beta}{2}\big) \dd'(s)^t \begin{pmatrix}
        \mathbf{X}\\ \mathbf{Y}
    \end{pmatrix}\Big] \\ 
    &\prod_{\mu\neq\nu}|e^{\i\theta_\mu}-e^{\i\theta_\nu}|^{\beta/2} \exp\Big(- \tfrac{\beta}{2}\begin{pmatrix}
        \mathbf{X}\\ \mathbf{Y}
    \end{pmatrix}^t K(s) \begin{pmatrix}
        \mathbf{X}\\ \mathbf{Y}
    \end{pmatrix} -2 \big(1-\tfrac{\beta}{2}\big) \dd(s)^t \begin{pmatrix}
        \mathbf{X}\\ \mathbf{Y}
    \end{pmatrix} \Big)\d^n\theta\\
    &= \E_n^{\beta,s}\Big[- \tfrac{\beta}{2}\begin{pmatrix}
        \mathbf{X}\\ \mathbf{Y}
    \end{pmatrix}^t K'(s) \begin{pmatrix}
        \mathbf{X}\\ \mathbf{Y}
    \end{pmatrix} -2 \big(1-\tfrac{\beta}{2}\big) \dd'(s)^t \begin{pmatrix}
        \mathbf{X}\\ \mathbf{Y}
    \end{pmatrix}\Big],
\end{align*}
and thus,
\begin{align}\label{eq:partitionlimit}
 \log \frac{Z_{n,\beta}(\gamma)}{Z_{n,\beta}(\T)}
= \int_0^1 \E_n^{\beta,s}\Big[- \tfrac{\beta}{2}\begin{pmatrix}
        \mathbf{X}\\ \mathbf{Y}
    \end{pmatrix}^t K'(s) \begin{pmatrix}
        \mathbf{X}\\ \mathbf{Y}
    \end{pmatrix} -2 \big(1-\tfrac{\beta}{2}\big) \dd'(s)^t \begin{pmatrix}
        \mathbf{X}\\ \mathbf{Y}
    \end{pmatrix}\Big] \d s.
\end{align}

The rest of this section is devoted to computing the limit of this expression as $n\to\infty$. The estimates obtained in the last section provide uniform bounds on the moments of the empirical spectral measure, and its limits, as follows.

\begin{lemma}\label{lemma:uniformbounds}
Fix $\epsilon\in (0,1)$. There is a constant $C(\gamma)$ such that
\[\sup_{s\in[0,1]} \sup_{j\geq 1}\E_n^{\beta,s}\Big[\Big(\sum_\mu\frac{\cos(j\theta_\mu)}{j^{4+\epsilon}} \Big)^2 \Big] \le C(\gamma), \quad \sup_{s\in[0,1]} \sup_{j\geq 1}\E_n^{\beta,s}\Big[\Big(\sum_\mu\frac{\sin(j\theta_\mu)}{j^{4+\epsilon}} \Big)^2 \Big] \le C(\gamma) \]
for all $n\ge 1$.
\end{lemma}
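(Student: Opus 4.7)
The plan is to apply the exponential moment bound \eqref{expbound} of Theorem~\ref{smainthm} to the mean-zero test function $G_s(\theta) = \pm\cos(j\theta)/j^{4+\epsilon}$ (and similarly for sine), and then convert the resulting two-sided exponential estimate into a second moment bound via the elementary inequality $x^2 \le e^x + e^{-x}$.

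Fix $\alpha'\in(0,\min(\alpha,\epsilon))$, where $\alpha>0$ is the regularity parameter of $\gamma\in C^{12+\alpha}$. For each $j\ge 1$ and $s\in[0,1]$ I would define $g_s$ on $\gamma_s$ by $g_s\circ\phi_s(e^{\i\theta})=\cos(j\theta)/j^{4+\epsilon}$. Since $\phi_s$ pushes the uniform measure on $\T$ forward to the equilibrium measure on $\gamma_s$, the integral $\int_{\gamma_s} g_s\,d\nu_{\mathrm{eq}}^s$ vanishes, so the hypotheses of Theorem~\ref{smainthm} are met. The key observation is that $\|G_s\|_{4,\alpha'}$ can be bounded by a constant independent of $j$ and $s$: indeed, $\|G_s^{(k)}\|_\infty = j^{k-4-\epsilon}\le 1$ for $0\le k\le 4$, and from $|G_s^{(4)}(\theta_1)-G_s^{(4)}(\theta_2)|\le \min(2j^{-\epsilon},j^{1-\epsilon}|\theta_1-\theta_2|)$ a standard interpolation (splitting at $|\theta_1-\theta_2|\sim j^{-1}$) yields the Hölder seminorm of $G_s^{(4)}$ of order $\alpha'$ bounded by $Cj^{\alpha'-\epsilon}\le C$. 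Consequently the constant $C(\gamma,\|G_s\|_{4,\alpha'})$ in \eqref{expbound} reduces to a constant depending only on $\gamma$ (and on the fixed parameters $\epsilon$, $\alpha'$).

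Writing $X_j = \sum_\mu \cos(j\theta_\mu)/j^{4+\epsilon}$, this gives $\E_n^{\beta,s}[e^{\pm X_j}]\le C(\gamma)$ uniformly in $n$, $j$, $s$. Since $x^2\le e^{x}+e^{-x}$, taking expectations produces $\E_n^{\beta,s}[X_j^2]\le 2C(\gamma)$, which is the claim; the sine case is handled identically by replacing cosine by sine throughout. I do not foresee any significant obstacle: the lemma is essentially a quantitative readout of the uniform exponential bound already established in Theorem~\ref{smainthm}, and the only point requiring care is the $j$-uniformity of the $C^{4+\alpha'}$-norm of the normalized trigonometric functions, which is precisely what motivates the denominator $j^{4+\epsilon}$ in the statement.
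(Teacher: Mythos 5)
Your proof is correct and follows essentially the same route as the paper: both reduce the claim to the uniform exponential moment bound \eqref{expbound} of Theorem~\ref{smainthm} applied to $G_s(\theta)=\cos(j\theta)/j^{4+\epsilon}$, and both hinge on checking that the $\|\cdot\|_{4,\cdot}$-norm of this test function is bounded independently of $j$ and $s$ (your interpolation at scale $j^{-1}$ gives the same bound the paper obtains via the identity for $\cos a-\cos b$). The only genuine difference is the final conversion step: the paper defines the one-variable analytic function $f_n^s(z)=\E_n^{\beta,s}[\exp(z\sum_\mu G_n(\theta_\mu))]$, bounds it uniformly on $\overline{\D}$, and invokes Cauchy's estimates to read off the second derivative at $0$, after first reducing to a maximizing index $j(n)$; you bypass both the maximizer and the complex-analytic machinery by using the pointwise inequality $x^2\le e^x+e^{-x}$ directly, which is more elementary and gives the same constant. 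The paper's route has the small advantage of yielding all moment bounds at once (and anticipates the normal-family argument used in Lemma~\ref{lemma:moments}), while yours is shorter for the specific second-moment claim.
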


\begin{proof}
We prove the first inequality, the second one is treated similarly. 

First note that since $|\sum_\mu \cos(j\theta_\mu) |\leq n$, 
\[ \lim_{j\to\infty} \E_n^{\beta,s}\Big[\Big(\sum_\mu\frac{\cos(j\theta_\mu)}{j^{4+\epsilon}} \Big)^2 \Big] = 0,\]
so for any fixed $s\in[0,1]$ there exists a $j(n)\in\N$ (which may depend on $s$) such that
\[
\sup_{j\geq 1}\E_n^{\beta,s}\Big[\Big(\sum_\mu\frac{\cos(j\theta_\mu)}{j^{4+\epsilon}} \Big)^2 \Big] = \E_n^{\beta,s}\Big[\Big(\sum_\mu\frac{\cos(j(n)\theta_\mu)}{j(n)^{4+\epsilon}} \Big)^2 \Big].
\] 
 Set $G_n(\theta) = \frac{\cos(j(n)\theta)}{j(n)^{4+\epsilon}}$ and define the analytic function
\[ f_n^s(z) = \E_n^{\beta,s}[\exp(z\sum_{\mu}G_n(\theta_\mu))], \]
which is uniformly bounded by
\[ \E_n^{\beta,s}[\exp(\sum_\mu G_n(\theta_\mu))] + \E_n^{\beta,s}[\exp(-\sum_\mu G_n(\theta_\mu))]\]
in the closed unit disc. We see that $G_n^{(4)}(\theta)=\frac{\cos(j(n)\theta)}{j(n)^\epsilon}$, and
\[
\sup_{0\le\theta_1,\theta_2\le 2\pi}\frac {|G_n^{(4)}(\theta_1)-G_n^{(4)}(\theta_2)|}{|e^{\i\theta_1}-e^{\i\theta_2}|^\epsilon}\le 2\sup_{0\le\theta_1,\theta_2\le 2\pi}\frac 
{\big|\sin\tfrac{j(n)(\theta_1-\theta_2)}{2}\big|}{\big|j(n)\sin\tfrac{(\theta_1-\theta_2)}{2}\big|^\epsilon}\le 2\sup_x\frac{|\sin x|}{\big|j(n)\sin\tfrac{x}{j(n)}\big|^\epsilon}\le C,
\]
where $C$ is independent of $j(n)$ and hence of $s$. Hence $\|G_n\|_{4,\epsilon}<\infty$, and it follows from Theorem \ref{smainthm} that there is a constant $C(\gamma)$ such that
\[ \sup_{s\in[0,1]}\E_n^{\beta,s}[\exp(\pm \sum_\mu G_n(\theta_\mu))] \leq C(\gamma) \]
for all $n\ge 1$.
This implies that $f_n^s(z)$ is uniformly bounded by $2C(\gamma)$ in the unit disc, for all $s\in[0,1]$, so by Cauchy's estimates, $|({f_n^s})^{(k)}(0)| \leq  2k!C(\gamma)$. In particular,
\[ \sup_{j\geq 1}\E_n^{\beta,s}\Big[\Big(\sum_\mu\frac{\cos(j\theta_\mu)}{j^{4}} \Big)^2 \Big] = f_n''(0) \leq 4C(\gamma).\]
The upper bound holds for all $s\in[0,1]$ so this proves the lemma.
\end{proof}

\begin{lemma}\label{lemma:moments}
Set $P_m$ to be the projection onto the first $m$ components of $\ell_2(\R)$. For any $m\geq 1$, $s\in[0,1]$,
\begin{align*}
&\lim_{n\to\infty} \E_n^{\beta,s} \Big[ \begin{pmatrix}
        P_m & 0 \\ 0 & P_m
    \end{pmatrix}\begin{pmatrix}
        \mathbf{X}\\ \mathbf{Y}
    \end{pmatrix} \Big] = (1-\tfrac{2}{\beta})\begin{pmatrix}
        P_m & 0 \\ 0 & P_m
    \end{pmatrix}(I+K(s))^{-1}\dd(s), \\
&\lim_{n\to\infty} \E_n^{\beta,s} \Big[\begin{pmatrix}
        P_m & 0 \\ 0 & P_m
    \end{pmatrix}\begin{pmatrix}
        \mathbf{X}\\ \mathbf{Y}
    \end{pmatrix} \begin{pmatrix}
        \mathbf{X}\\ \mathbf{Y}
    \end{pmatrix}^t \begin{pmatrix}
        P_m & 0 \\ 0 & P_m
    \end{pmatrix} \Big] \\
&\qquad = \begin{pmatrix}
        P_m & 0 \\ 0 & P_m
    \end{pmatrix} \Big( \frac{1}{\beta} (I+K(s))^{-1} + (1-\tfrac{2}{\beta})^2(I+K(s))^{-1}\dd(s)\dd(s)^t(I+K(s))^{-1}\Big) \begin{pmatrix}
        P_m & 0 \\ 0 & P_m
    \end{pmatrix}.
\end{align*}
\end{lemma}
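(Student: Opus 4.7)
My plan is to extract both moments from the Laplace transform of a linear statistic $\mathbf{v}^t\mathbf{Z}$, with $\mathbf{Z}=\begin{pmatrix}\mathbf{X}\\\mathbf{Y}\end{pmatrix}$, using Theorem \ref{smainthm}. First I would fix $\mathbf{v}\in l_2(\N)\oplus l_2(\N)$ supported on the first $m$ coordinates of each block, and introduce the zero-mean trigonometric polynomial
\[
G_\mathbf{v}(\theta):=2\begin{pmatrix}\mathbf{x}_\theta\\\mathbf{y}_\theta\end{pmatrix}^t\mathbf{v},
\]
which lies in $C^{4+\epsilon}$ with $\|G_\mathbf{v}\|_{4,\epsilon}$ depending only on $\mathbf{v}$. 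Since $\sum_\mu G_\mathbf{v}(\theta_\mu)=2\mathbf{v}^t\mathbf{Z}$, Theorem \ref{smainthm} applied with Fourier data $\zeta\mathbf{v}$ yields, for every real $\zeta$,
\[
L_n^s(\zeta):=\E_n^{\beta,s}\bigl[e^{2\zeta\mathbf{v}^t\mathbf{Z}}\bigr]\;\longrightarrow\;\exp\Bigl(\tfrac{2}{\beta}\zeta^2\mathbf{v}^t(I+K(s))^{-1}\mathbf{v}+2(1-\tfrac{2}{\beta})\zeta\,\dd(s)^t(I+K(s))^{-1}\mathbf{v}\Bigr)=:L^s(\zeta).
\]

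Next, I would upgrade this pointwise convergence on $\R$ to uniform convergence on compact subsets of a complex disc. For $|\zeta|\le c$ one has
\[
|L_n^s(\zeta)|\le\E_n^{\beta,s}\bigl[e^{2c\mathbf{v}^t\mathbf{Z}}\bigr]+\E_n^{\beta,s}\bigl[e^{-2c\mathbf{v}^t\mathbf{Z}}\bigr],
\]
and both terms are bounded uniformly in $n$ by the estimate \eqref{expbound}. Precisely as in the proof of Lemma \ref{lemma:real}, Montel's theorem combined with the pointwise convergence on $\R$ then forces $L_n^s\to L^s$ uniformly on compact subsets of $\{|\zeta|<c\}$, and Cauchy's integral formula gives convergence of derivatives at $\zeta=0$.

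Finally, I would read off the two identities from the first two derivatives. The first derivative at $0$ gives $2\mathbf{v}^t\E_n^{\beta,s}[\mathbf{Z}]\to 2(1-\tfrac{2}{\beta})\dd(s)^t(I+K(s))^{-1}\mathbf{v}$; by the self-adjointness of $(I+K(s))^{-1}$ (which follows from the symmetry $a_{kl}=a_{lk}$ of the Grunsky coefficients) this rewrites as $2(1-\tfrac{2}{\beta})\mathbf{v}^t(I+K(s))^{-1}\dd(s)$, and letting $\mathbf{v}$ range over all such vectors gives the first identity componentwise in the first $m$ block. The second derivative gives
\[
4\mathbf{v}^t\E_n^{\beta,s}[\mathbf{Z}\mathbf{Z}^t]\mathbf{v}\longrightarrow\tfrac{4}{\beta}\mathbf{v}^t(I+K(s))^{-1}\mathbf{v}+4(1-\tfrac{2}{\beta})^2\bigl(\dd(s)^t(I+K(s))^{-1}\mathbf{v}\bigr)^2,
\]
and polarizing in $\mathbf{v}$ (replacing $\mathbf{v}$ by $\mathbf{u}\pm\mathbf{v}$) extracts the full bilinear form, producing the second identity in the form stated.

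The only step requiring genuine care is the uniform-in-$n$ bound on $L_n^s$ in a complex neighborhood of $0$; this is the hinge that converts real-parameter Laplace convergence into moment convergence. But as indicated, it follows immediately from the uniform bound \eqref{expbound} already established in Theorem \ref{smainthm}, so no further estimates are needed.
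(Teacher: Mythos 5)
Your proof is correct and follows essentially the same route as the paper's: establish convergence of a moment-generating function, use the uniform bound from Theorem~\ref{smainthm} together with Montel/Vitali to pass to complex $\zeta$ and differentiate at the origin. The only cosmetic difference is that you use a one-variable Laplace transform in a fixed direction $\mathbf{v}$ and then polarize, whereas the paper works directly with a $2m$-variable generating function and reads off partial derivatives; these are interchangeable.
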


\begin{proof}
Let $z=\{ z_k \}_{k= 1}^m\in \overline{\D}^{m}$, $w=\{ w_k \}_{k= 1}^m\in \overline{\D}^{m}$. For any fixed $s$, define the analytic function in $2m$ variables,
\[f_n(z,w) = \E_n^{\beta,s}\Big[\exp\Big( \sum_{k= 1}^m \big( z_k \sum_\mu \cos(k\theta_\mu) + w_k \sum_\mu \sin (k\theta_\mu)\big) \Big)\Big].  \]
We have that
\begin{align*}
 |f_n(z,w)| &\leq \E_n^{\beta,s}\Big[\exp\Big( \sum_{k= 1}^m \big(\Re z_k \sum_\mu \cos(k\theta_\mu) + \Re w_k\sum_\mu \sin (k\theta_\mu) \big)\Big)\Big] \\
 &\leq \sum_{p \in \{-1,1\}^{2m}} \E_n^{\beta,s}\Big[\exp\Big( \sum_{k= 1}^m  \big(p_k \sum_\mu \cos(k\theta_\mu) + p_{m+k}\sum_\mu \sin (k\theta_\mu) \big)\Big)\Big]
\end{align*}
since $|z_k|\leq 1$ and $|w_k|\leq 1$ for all $1\leq k\leq m$.

By Theorem \ref{mainthm}, there exist a $N\in\N$ such that for all $p \in \{-1,1\}^{2m}$, if $n\geq N$,
\begin{align*}
    &\E_n^{\beta,s}\Big[\exp\Big( \sum_{k= 1}^m  p_k \sum_\mu \cos(k\theta_\mu) + p_{m+k} \sum_\mu \sin (k\theta_\mu) \Big)\Big] \\
&\leq 2\exp \Big( \tfrac{2}{\beta} \mathbf{v}^t(I+K(s))^{-1}\mathbf{v}+2(1-\tfrac{2}{\beta})\dd(s)^t(I+K(s))^{-1}\mathbf{v} \Big)
\end{align*}
where 
\[
\mathbf{v} = \frac{1}{2} \big((p_1,\sqrt{2}p_2,\dots,\sqrt{m}p_m,0,0,\dots)\,\, (p_{m+1},\sqrt{2}p_{m+2},\dots,\sqrt{m}p_{2m},0,0,\dots)\big)^t
\]
is a vector in $\ell_2(\R) \oplus \ell_2(\R)$. This implies that for $n\geq N$,
\[
    \big| f_n(z,w
) \big| \leq \sum_{p \in \{-1,1\}^{2m}} \exp \Big(  \|\mathbf{v}\|\|(I+K(s))^{-1}\|(\tfrac{2}{\beta}\|\mathbf{v}\| +2|1-\tfrac{2}{\beta}|\|\dd(s)\|) \Big).
\]
We know that $\sup_{s\in[0,1]} \| \dd(s) \| < \infty$ by \eqref{eq:coeffdecay2} and $\|\mathbf{v}\| = (\frac{m(m+1)}{4})^{1/2}$ for all $p$. Moreover we know that $\sup_{s\in[0,1]} \|K(s)\| \le\kappa < 1$ so $\|(I+K(s))^{-1}\| \leq (1-\kappa)^{-1}$. Hence there is a constant $C$ which does not depend on $s$ such that
\[
   \sup_{n\in\N} \big| f_n(z,w) \big| \leq C
\]
uniformly in $\overline{\D}^{2m}$, which makes the analytic functions $f_n(z,w)$ a normal family in $\D^{2m}$. By Theorem \ref{mainthm} the sequence converges pointwise, whence uniformly on compact subsets, to
\[ 
f(z,w) = \exp \Big( \tfrac{2}{\beta} \mathbf{u}^t(I+K(s))^{-1}\mathbf{u}+2(1-\tfrac{2}{\beta})\dd(s)^t(I+K(s))^{-1}\mathbf{u} \Big), 
\]
where 
\[
\mathbf{u} = \frac{1}{2} \big((z_1,\sqrt{2}z_2,\dots,\sqrt{m}z_m,0,0,\dots)\,\, (w_1,\sqrt{2}w_2,\dots,\sqrt{m}w_m,0,0,\dots)\big)^t.
\]
As a consequence its derivatives converge to those of $f(z,w)$. In particular, 
\[ \lim_{n\to\infty} \E_n^{\beta,s} \Big[ \sum_\mu \cos(j\theta_\mu) \Big] = \frac{\partial}{\partial z_j}f(z,w)\Big|_{z=w=0} = \sqrt{j} (1-\tfrac{2}{\beta})((I+K(s))^{-1}\dd(s))_j. \]
The second derivatives give the limiting variances and covariances. For example,
\begin{align*}
    &\lim_{n\to\infty} \E_n^{\beta,s} \Big[ \sum_\mu \cos(j\theta_\mu) \sum_\mu \sin(k\theta_\mu) \Big] = \frac{\partial}{\partial z_j} \frac{\partial}{\partial w_k} f(z,w)\Big|_{z=w=0} \\
    &= \frac{\sqrt{jk}}{\beta}(I+K(s))^{-1}_{j,m+k} +\sqrt{jk} (1-\tfrac{2}{\beta})^2((I+K(s))^{-1}\dd(s))_j((I+K(s))^{-1}\dd(s))_{m+k},
\end{align*}
and this proves the lemma.

\end{proof}

We are now ready for the proof of the proposition on the asymptotics of the partition function.
\begin{proof}[Proof of Proposition \ref{prop:partition}.]
Proposition \ref{lemma:uniformbounds} allows us to use the dominated convergence theorem to compute the limit \eqref{eq:partitionlimit}. Indeed, by the definition of $K(s)$ and $d(s)$,
\begin{align*}
    & \sup_{s\in[0,1]}\Big| \E_n^{\beta,s}\Big[ \begin{pmatrix}
        \mathbf{X}\\ \mathbf{Y}
    \end{pmatrix}^t K'(s) \begin{pmatrix}
        \mathbf{X}\\ \mathbf{Y}
    \end{pmatrix} \Big] \Big| \leq  \sup_{s\in[0,1]} \sum_{k,l\geq 1} \frac{1}{\sqrt{kl}}(k+l)s^{k+l-1} \Big(  |b_{kl}^{(1)}(s)| \E_n^{\beta,s} \Big| \sum_\mu \cos(k\theta_\mu) \sum_\nu \cos(l\theta_\nu) \Big| \\ &+ 2|b_{kl}^{(2)}(s)| \E_n^{\beta,s} \Big| \sum_\mu \cos(k\theta_\mu) \sum_\nu \sin(l\theta_\nu)\Big| + |b_{kl}^{(1)}(s)| \E_n^{\beta,s} \Big| \sum_\mu \sin(k\theta_\mu) \sum_\nu \sin(l\theta_\nu)\Big|  \Big) \\
    &\leq C(\gamma)\sum_{k,l\geq 1} (k+l)|a_{kl}|(kl)^{4+\epsilon}, 
\end{align*}
where we used the Cauchy-Schwarz inequality and Lemma \ref{lemma:uniformbounds}. By \eqref{eq:coeffdecay2} this is bounded for $\epsilon$ sufficiently small. Similarly,
\begin{align*}
    \sup_{s\in[0,1]}\Big| \E_n^{\beta,s}\Big[ \dd'(s)^t \begin{pmatrix}
        \mathbf{X}\\ \mathbf{Y}
    \end{pmatrix}\Big] \Big| &\leq \sup_{s\in[0,1]} \sum_{k\geq 1} \sqrt{k} \sum_{j=1}^{k-1} |a_{j,k-j}|\Big(\E_n^{\beta,s} \Big| \sum_\mu \cos(k\theta_\mu)\Big|+ \E_n^{\beta,s} \Big| \sum_\mu \sin(k\theta_\mu)\Big| \Big) \\ &<\infty
\end{align*} 
by Lemma \ref{lemma:uniformbounds} and \eqref{eq:coeffdecay2}.
Thus,
\begin{align*}
 &\lim_{n\to\infty} \int_0^1 \E_n^{\beta,s}\Big[- \frac{\beta}{2}\begin{pmatrix}
        \mathbf{X}\\ \mathbf{Y}
    \end{pmatrix}^t K'(s) \begin{pmatrix}
        \mathbf{X}\\ \mathbf{Y}
    \end{pmatrix} -2 \Big(1-\frac{\beta}{2}\Big) \dd'(s)^t \begin{pmatrix}
        \mathbf{X}\\ \mathbf{Y}
    \end{pmatrix}\Big] \d s \\
    &= \int_0^1 \lim_{n\to\infty} \E_n^{\beta,s}\Big[- \frac{\beta}{2}\begin{pmatrix}
        \mathbf{X}\\ \mathbf{Y}
    \end{pmatrix}^t K'(s) \begin{pmatrix}
        \mathbf{X}\\ \mathbf{Y}
    \end{pmatrix} -2 \Big(1-\frac{\beta}{2}\Big) \dd'(s)^t \begin{pmatrix}
        \mathbf{X}\\ \mathbf{Y}
    \end{pmatrix}\Big] \d s
\end{align*}
Moreover, for all $n$ large enough,
\begin{multline*}
    \Big| \E_n^{\beta,s}\Big[ \begin{pmatrix}
        \mathbf{X}\\ \mathbf{Y}
    \end{pmatrix}^t K'(s) \begin{pmatrix}
        \mathbf{X}\\ \mathbf{Y}
    \end{pmatrix} \Big] -  \E_n^{\beta,s}\Big[ \begin{pmatrix}
        \mathbf{X}\\ \mathbf{Y}
    \end{pmatrix}^t \begin{pmatrix}
        P_m & 0 \\ 0 & P_m
    \end{pmatrix} K'(s) \begin{pmatrix}
        P_m & 0 \\ 0 & P_m
    \end{pmatrix}\begin{pmatrix}
        \mathbf{X}\\ \mathbf{Y}
    \end{pmatrix} \Big] \Big| \\
    \leq \sum_{\max(k,l)\geq m} \frac{1}{\sqrt{kl}}(k+l)s^{k+l-1} \Big(  |b_{kl}^{(1)}| \E_n^{\beta,s} \Big| \sum_\mu \cos(k\theta_\mu) \sum_\nu \cos(l\theta_\nu) \Big| + 2|b_{kl}^{(2)}| \E_n^{\beta,s} \Big| \sum_\mu \cos(k\theta_\mu) \sum_\nu \sin(l\theta_\nu)\Big| \\ +|b_{kl}^{(1)}| \E_n^{\beta,s} \Big| \sum_\mu \sin(k\theta_\mu) \sum_\nu \sin(l\theta_\nu)\Big|  \Big) \leq C(\gamma)\sum_{k\geq 1, l\geq m} (kl)^{-1-\epsilon} = \frac{C(\gamma)}{m^{\epsilon}}.
\end{multline*}
The bound is uniform in $n$, hence,
\[ \lim_{n\to\infty}  \E_n^{\beta,s}\Big[ \begin{pmatrix}
        \mathbf{X}\\ \mathbf{Y}
    \end{pmatrix}^t K'(s) \begin{pmatrix}
        \mathbf{X}\\ \mathbf{Y}
    \end{pmatrix} \Big] = \lim_{m\to\infty} \lim_{n\to\infty}  \E_n^{\beta,s}\Big[ \begin{pmatrix}
        \mathbf{X}\\ \mathbf{Y}
    \end{pmatrix}^t \begin{pmatrix}
        P_m & 0 \\ 0 & P_m
    \end{pmatrix} K'(s) \begin{pmatrix}
        P_m & 0 \\ 0 & P_m
    \end{pmatrix}\begin{pmatrix}
        \mathbf{X}\\ \mathbf{Y}
    \end{pmatrix} \Big]. \]
The limit can now be computed with Lemma \ref{lemma:moments}:
\begin{align*}
     =\frac{1}{\beta} \mathrm{Tr}\ K'(s)(I+K(s))^{-1}+(1-\tfrac{2}{\beta})^2 \dd(s)^t(I+K(s))^{-1}K'(s)(I+K(s))^{-1}\dd(s).
\end{align*}
Similarly,
\[\lim_{n\to\infty }\E_n^{\beta,s}\Big[ \dd'(s)^t \begin{pmatrix}
        \mathbf{X}\\ \mathbf{Y}
    \end{pmatrix}\Big] = \lim_{m\to\infty} \lim_{n\to\infty }\E_n^{\beta,s}\Big[ \dd'(s)^t \begin{pmatrix}
        P_m & 0 \\ 0 & P_m
    \end{pmatrix}\begin{pmatrix}
        \mathbf{X}\\ \mathbf{Y}
    \end{pmatrix}\Big] = (1-\tfrac{2}{\beta}) \dd'(s)^t (I+K(s))^{-1}\dd(s). \]
Combined, these limits give
\begin{gather*}
\lim_{n\to\infty} \E_n^{\beta,s}\Big[- \frac{\beta}{2}\begin{pmatrix}
        \mathbf{X}\\ \mathbf{Y}
    \end{pmatrix}^t K'(s) \begin{pmatrix}
        \mathbf{X}\\ \mathbf{Y}
    \end{pmatrix} -2 \Big(1-\frac{\beta}{2}\Big) \dd'(s)^t \begin{pmatrix}
        \mathbf{X}\\ \mathbf{Y}
    \end{pmatrix}\Big] = 2(1-\tfrac{2}{\beta})^2 \dd'(s)^t (I+K(s))^{-1}\dd(s) \\
 -\frac{1}{2} \mathrm{Tr}\ K'(s)(I+K(s))^{-1}-\tfrac{\beta}{2}(1-\tfrac{2}{\beta})^2 \dd(s)^t(I+K(s))^{-1}K'(s)(I+K(s))^{-1}\dd(s) \\
=  \frac{\d}{\d s} \Big( -\tfrac{1}{2} \log \det (I+K(s)) + \tfrac{\beta}{2}(1-\tfrac{2}{\beta})^2 \dd(s)^t(I+K(s))^{-1}\dd(s)\Big).    
\end{gather*}
Integrating from $0$ to $1$, and using that $K(0)=d(0)=0$ and $K(1)=K$, $d(1)=d$, finishes the proof.
\end{proof}


\begin{bibdiv}

 \begin{biblist}
 
 \bib{Bish}{article}{
    author = {Bishop, C.},
    title = {Weil-Petersson curves, $\beta$-numbers and minimal surfaces,\\}
    journal ={http://www.math.stonybrook.edu/bishop/papers/wpce.pdf},

 }
 \bib{DPS}{article}{
    author = {Khavinson, D.},
    author ={Putinar, M.},
    author={Shapiro, H.S.}
    title = {Poincar\'e's Variational Problem in Potential Theory},
    journal ={Arch. Rational Mech. Anal.},
    volume = {185},
    year = {2007}
    pages = {143-184}
 }

 \bib{For}{book}{
    author = {Forrester, P.},
    title= {Log-Gases and Random Matrices},
    date = {2010},
    series = {London Mathematical Society Monographs},
    volume= {34},
    publisher = {Princeton University Press}
    place={Princeton, NJ}
 }

 \bib{HarLam}{article}{
    author = {Hardy, A.},
    author ={Lambert, G.},
    title = {CLT for Circular $\beta$-Ensembles at High Temperature},
    journal ={Journal of Functional Analysis},
    volume = {280},
    number = {7},
    year = {2021}
 }

 \bib{JiaMat}{article}{
    author = {Jiang, T.},
    author = {Matsumoto, S.},
    title = {Moments of traces of circular $\beta$-ensembles},
    date = {2015},
    journal = {The Annals of Probability},
    volume = {43},
    number = {6},
    pages = {3279-3336}
 }

 \bib{Joh88}{article}{
    author = {Johansson, K.},
    title = {On Szeg\H{o}'s asymptotic formula for Toeplitz determinants and generalizations},
    journal = {Bull. Sci. Math.},
    volume = {112},
    number = {3},
    year = {1988},
    pages = {257-304}
 }

\bib{Joh22}{book}{
    AUTHOR = {Johansson, K.},
    TITLE = {Strong Szeg\H{o} theorem on a Jordan curve. \textnormal{In:} Toeplitz Operators and Random Matrices},
    EDITOR = {Basor, E.},
    EDITOR = {B\"ottcher, A.},
    EDITOR= {Ehrhardt, T.},
    EDITOR= {Tracy, C.A.},
    DATE = {2022},
    series = {Operator Theory: Advances and Applications},
    volume = {289},
    publisher = {Birkhäuser,}  
    place = {Cham}
}

 \bib{JoVi}{article}{
    author = {Johansson, K.},
    author={Viklund, F.}
    title = {Coulomb gas and the Grunsky operator on a Jordan domain with corners},
    journal = {arXiv:2309.00308},
    }
\bib{KilNen}{article}{
    author = {Killip, R.},
    author = {Nenciu, I.},
    title = {Matrix models for circular ensembles},
    journal = {International Mathematics Research Notices},
    year = {2004},
    volume = {2004},
    number = {50},
    pages = {2665-270},
    
}

\bib{Kress}{book}{
    author = {Kress, R.},
    title = {Linear Integral Equations},
    date = {1989},
    series = {Applied Mathematical Sciences},
    volume = {82},
    publisher = {Springer-Verlag},
    place = {Berlin}
    }

\bib{Lam}{article}{
    author = {Lambert, G.},
    title = {Mesoscopic central limit theorem for the circular $\beta$-ensembles and applications},
    journal = {Electron. J. Probab.},
    year = {2011},
    Volume = {26},
    number = {7},
    pages = {1-33}
}

\bib{Mehta}{book}{
    author = {Mehta, M. L.},
    title =  {Random matrices},
    publisher = {Elsevier academic press},
    edition = {Third Edition},
    year = {2004}
}

\bib{Pom}{book}{
    author = {Pommerenke, C.},
    title = {Univalent functions. With a chapter on quadratic differentials by Gerd Jensen},
    date = {1975},
    series = {Studia Mathematica/Mathematische Lehrb\"{u}cher},
    volume = {XXV},
    publisher = {Vandenhoeck and Ruprecht},
    place = {G\"{o}ttingen}
}

\bib{Pom69}{article}{
    author = {Pommerenke, C.},
    title = {\"{U}ber die Verteilung der Fekete-Punkte II},
    journal = {Math. Ann.},
    date = {1969},
    volume = {179},
    pages={212-218}
}

\bib{SafTot}{book}{
  title={Logarithmic potentials with external fields},
  author={Saff, Edward B and Totik, Vilmos},
  volume={316},
  year={1997},
  publisher={Springer-Verlag Berlin Heidelberg GmbH}
}

\bib{Sim04}{book}{
    author = {Simon, B.},
    title = {Orthogonal polynomials on the unit circle, part 1: classical theory},
    year = {2004},
    series={AMS Colloquium Publications},
    volume = {54},
    publisher={American Mathematical Society},
    place={Providence, RI}
}

\bib{Sch}{article}{
    author = {Schiffer, M.},
    title = {Fredholm eigenvalues and Grunsky matrices},
    year = {1981},
    journal = {Annales Polonici Mathematici},
    volume = {1},
    number = {39}
    pages={149-164}
}

\bib{Szego}{book}{
    author = {Szeg\H{o}, G.},
    title = {Orthogonal polynomials},
    series= {American Mathematical Society Colloquium Publications},
    volume= {23},
    year = {1959},
    place = {Providence, RI.},
    publisher= {American Mathematical Society}
}

\bib{Wang}{article}{
    author = {Wang, Y.},
    title = {Equivalent Descriptions of the Loewner Energy},
    year = {2019},
    journal = {Invent. math.},
    volume = {218},
    pages = {573-621}
}

\bib{Webb}{article}{
    author = {Webb, C.},
    title = {Linear statistics of the circular $\beta$-ensemble, Stein’s method, and circular Dyson Brownian motion},
    journal = {Electron. J. Probab.},
    volume = {21},
    number = {25},
    year = {2016}, 
    pages ={1-16}
}

\bib{WieZab22}{article}{
    AUTHOR = {Wiegmann, P.},
    AUTHOR = {Zabrodin, A.},
    TITLE = {Dyson gas on a curved contour},
    DATE = {2022},
    JOURNAL = {J. Phys. A: Math. Theor.},
    volume = {55},
    number = {16},
    pages = {165202}
}

\end{biblist}

\end{bibdiv}

\end{document}